\definecolor{mymy}{RGB}{0, 153, 153}
\DeclareMathAlphabet{\cmcal}{OMS}{cmsy}{m}{n}
\newtheorem{thm}{Theorem}[section]
\newtheorem{cor}[thm]{Corollary}
\newtheorem{prop}[thm]{Proposition}
\newtheorem{conj}[thm]{Conjecture}
\theoremstyle{definition}
\newtheorem{defn}[thm]{Definition}
\theoremstyle{remark}
\newtheorem{rem}[thm]{\bf{Remark}}
\newtheorem{ques}[thm]{\bf{Question}}
\newtheorem{notation}[thm]{\bf{Notation}}
\numberwithin{equation}{section} \numberwithin{table}{section}
\newtheorem*{thm*}{\bf{Theorem}}
\newtheorem*{claim*}{\bf{Claim}}
\newtheorem*{rem*}{\bf{Remark}}
\newtheorem*{rems*}{\bf{Remarks}}
\newtheorem*{exam*}{\bf{Example}}
\newtheorem*{exams*}{\bf{Examples}}
\newcommand{\C}{{\mathbb{C}}}
\newcommand{\F}{{\mathbb{F}}}
\newcommand{\Q}{{\mathbb{Q}}}
\newcommand{\R}{{\mathbb{R}}}
\newcommand{\T}{{\mathbb{T}}}
\newcommand{\Z}{{\mathbb{Z}}}
\newcommand{\fa}{{\mathfrak{a}}}
\newcommand{\fb}{{\mathfrak{b}}}
\newcommand{\m}{{\mathfrak{m}}}
\newcommand{\fn}{{\mathfrak{n}}}
\newcommand{\fH}{{\mathfrak{H}}}
\newcommand{\cI}{{\cmcal{I}}}
\newcommand{\cO}{{\cmcal{O}}}
\def\d{\delta}
\def\g{\gamma}
\def\<{\langle}
\def\>{\rangle}
\newcommand{\zell}{{\Z/{\ell\Z}}}
\newcommand{\muell}{{\mu_{\ell}}}
\newcommand{\Gm}{{\mathbb{G}}_{m}}
\newcommand{\inj}{\hookrightarrow}
\newcommand{\arinj}{\ar@{^(->}}
\newcommand{\arsurj}{\ar@{->>}}
\newcommand{\arsub}{\ar@{}[r]|-*[@]{\subset}}
\newcommand{\arsup}{\ar@{}[r]|-*[@]{\supset}}
\newcommand{\arcap}{\ar@{}[d]|-*[@]{\subset}}
\newcommand{\arcup}{\ar@{}[u]|-*[@]{\subset}}
\newcommand{\arin}{\ar@{}[u]|-*[@]{\in}}
\renewcommand{\pmod}[1]{{\,(\operatorname{mod}\hspace{0.7mm} {#1})}}
\newcommand{\Hom}{{\operatorname{Hom}}}
\newcommand{\Gal}{{\operatorname{Gal}}}
\newcommand{\End}{{\operatorname{End}}}
\newcommand{\Aut}{{\operatorname{Aut}}}
\newcommand{\SL}{{\operatorname{SL}}}
\newcommand{\sheafhom}{{\mathscr{H}\kern-.5pt om}}
\newcommand{\sheafext}{{\mathscr{E}\kern-.5pt xt}}
\newcommand{\Frob}{{\operatorname{Frob}}}
\newcommand{\old}{{\operatorname{old}}}
\newcommand{\new}{{\operatorname{new}}}
\newcommand{\upto}{{up to products of powers of 2 and 3}}
\newcommand{\modl}{{\pmod {\ell}}}
\renewcommand{\th}{{\text{th}}}
\renewcommand{~}{\hspace*{0.3mm}}
\newcommand{\ts}{{\textsection}}
\newcommand{\ra}{\rightarrow}
\newcommand{\wt}{\widetilde}
\newcommand{\ov}{\overline}
\mathchardef\hyp="2D
\newcommand{\vv}{\vspace*{2mm}}
\def\@tocline#1#2#3#4#5#6#7{\relax
  \ifnum #1>\c@tocdepth % then omit
  \else
    \par \addpenalty\@secpenalty\addvspace{0.1em}%
    \begingroup \hyphenpenalty\@M
    \@ifempty{#4}{%
      \@tempdima\csname r@tocindent\number#1\endcsname\relax
    }{%
      \@tempdima#4\relax
    }%
    \parindent\z@ \leftskip#3\relax \advance\leftskip\@tempdima\relax
    \rightskip\@pnumwidth plus4em \parfillskip-\@pnumwidth
    #5\leavevmode\hskip-\@tempdima
      \ifcase #1
       \or\or \hskip 2em \or \hskip 4em \else \hskip 6em \fi%
      #6\nobreak\relax
    \dotfill\hbox to\@pnumwidth{\@tocpagenum{#7}}\par
    \nobreak
    \endgroup
  \fi}
\begin{document}                                                                          
\title{Non-optimal levels of a reducible mod $\ell$ modular representation}
\author{Hwajong Yoo}
\address{Center for Geometry and Physics, Institute for Basic Science (IBS), Pohang, Republic of Korea 37673}
\email{hwajong@gmail.com}
\thanks{This work was supported by IBS-R003-D1.}

\subjclass[2010]{11F33, 11F80 (Primary); 11G18(Secondary)}
\keywords{Eisenstein ideals, non-optimal levels, reducible Galois representation}

\begin{abstract}
Let $\ell \geq 5$ be a prime and let $N$ be a square-free integer prime to $\ell$. For each prime $p$ dividing $N$, let $a_p$ be either $1$ or $-1$. We give sufficient criteria for the existence of a newform $f$ of weight 2 for $\Gamma_0(N)$ such that the mod $\ell$ Galois representation attached to $f$ is reducible and $U_p f = a_p f$ for primes $p$ dividing $N$. The main techniques used are level raising methods based on an exact sequence due to Ribet.
 
\end{abstract}
\maketitle
\setcounter{tocdepth}{1}
\tableofcontents

%%%%%%%%%%%%%%%%      Begining of the article    %%%%%%%%%%%%%%%%%%%%%
\section{Introduction}
It has been known that newforms for congruence subgroups of $\SL_2(\Z)$ give rise to compatible systems of $\ell$-adic Galois representations, 
and if the $\ell$-adic Galois representations attached to two newforms are isomorphic for some prime $\ell$, 
then the newforms are, in fact, equal. But the corresponding statement is not true for the semisimplifications of the mod $\ell$ reductions of $\ell$-adic Galois representations attached to newforms, as different newforms can be congruent modulo $\ell$. To study the different levels from which a given modular mod $\ell$ representation $\rho$ can arise is interesting and has been discussed by several mathematicians, Carayol, Diamond, Khare, Mazur, Ribet, and Taylor in the case where $\rho$ is (absolutely) irreducible. (For more details, see \cite{DT94}.) 

For simplicity, fix a prime $\ell\geq 5$ and let $f$ be a newform of weight 2 for $\Gamma_0(N)$ with a square-free integer $N$ prime to $\ell$. Assume that $\ov{\rho}_f$, the semisimplified mod $\ell$ 
Galois representation attached to $f$, is reducible. Then, $\ov{\rho}_f \simeq \mathbbm{1} \oplus \chi_{\ell}$, where $\mathbbm{1}$ is the trivial character and $\chi_{\ell}$ is the mod 
$\ell$ cyclotomic character (Proposition \ref{prop:reducible}). The Serre conductor (see \cite{Se87}) of $\mathbbm{1} \oplus \chi_{\ell}$ is 1 because it is unramified outside $\ell$. The main purpose of this article is to find possible non-optimal levels of $\mathbbm{1} \oplus \chi_{\ell}$ as in the irreducible case due to Diamond and Taylor \cite{DT94}.
(For other works of non-optimal levels of reducible mod $\ell$ Galois representations, see \cite{BM1, BM2}.)
Since we consider a newform $f$ of weight 2 and square-free level $N$ with trivial character, an eigenvalue of the Hecke operator $U_p$ of $f$ is either $1$ or $-1$ for a prime divisor $p$ of $N$. 
So, by appropriately ordering the prime divisors of the level, we formulate
this problem as follows:

\begin{ques}
Is there a newform $f$ of weight 2 and level $N=\prod\limits_{i=1}^t p_i$ with trivial character whose mod $\ell$ Galois representation is reducible such that $U_{p_i} f = f$ for $1 \leq i \leq s$ and $U_{p_j} f = -f$ for $ s+1 \leq  j \leq t$?
\end{ques}
We say a $t$-tuple $(p_1, \dots, p_t)$ of distinct primes \textit{admissible for $s$} if such a newform $f$ exists.
So, our question is to find admissible $t$-tuples for $s$ with $s\leq t$. 

\begin{thm}[Necessary conditions, Ribet]\label{thm:admnec}
Let $\ell\geq 5$ be a prime. Assume that a $t$-tuple $(p_1, \dots, p_t)$ is admissible for $s$. Then, we have
\begin{enumerate}
\item $s\geq 1$;
\item $\ell$ divides $\prod\limits_{i=1}^t (p_i-1)$ if $s=t$; 
\item $p_j \equiv -1 \modl$  for $s+1 \leq j \leq t$.
\end{enumerate}
\end{thm}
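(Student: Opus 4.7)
The plan is to combine local--global compatibility of $\rho_f$ at primes $p\mid N$ with the structure of mod-$\ell$ Eisenstein congruences at level $N$.

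For (3), at a prime $p\mid N$, the local representation $\rho_f|_{D_p}$ is of Steinberg type, with semisimplification $\lambda_{a_p}\oplus \lambda_{a_p}\chi_\ell$, where $\lambda_{a_p}$ denotes the unramified character sending $\Frob_p$ to $a_p$. Computing $\tr\brho_f(\Frob_p)$ in two ways --- globally as $1+p\modl$ from $\mathbbm{1}\oplus\chi_\ell$, locally as $a_p(1+p)\modl$ --- gives $(a_p-1)(1+p)\equiv 0\modl$. The case $a_p=-1$ combined with $\ell\geq 5$ then forces $p\equiv -1\modl$, proving (3).

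Next I would classify the weight-$2$ Eisenstein eigenforms at level $N = p_1\cdots p_t$ with trivial character and $T_q$-eigenvalue $q+1$ for $q\nmid N$. A direct computation with $E_2$ and its level-raises $E_2(dz)$ shows these are in bijection with the nonempty subsets $T\subseteq\{p_1,\ldots,p_t\}$, with the corresponding $E_T$ satisfying $U_p E_T = E_T$ for $p\in T$ and $U_p E_T = p\,E_T$ for $p\notin T$. By (3), the mod-$\ell$ eigensystem of $f$ is realized by some such $E_T$: one has $a_{p_i}\equiv 1\modl$ for $p_i\in T$ and $a_{p_i}\equiv p_i\modl$ for $p_i\notin T$. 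If $s=0$, every $a_{p_i}=-1$, so every $p_i\notin T$ (as $\ell>2$), forcing $T=\emptyset$ and contradicting nonemptiness; hence $s\geq 1$, proving (1). If $s=t$ and some $p_i\notin T$, then $1\equiv p_i\modl$ yields $\ell\mid p_i-1\mid \varphi(N)$; otherwise $T = \{p_1,\ldots,p_t\}$ and $f$ is congruent modulo $\ell$ to the ``new'' Eisenstein eigenform $E_N^{\new}:=\sum_{d\mid N}\mu(d)\,d\,E_2(dz)$.

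The remaining case --- $s=t$ with $T$ the full set --- is the main obstacle. The constant term of $E_N^{\new}$ at $\infty$ equals $\prod_{i=1}^t(1-p_i)=(-1)^t\varphi(N)$, while $f$ is cuspidal; the challenge is to upgrade the Hecke-eigenvalue congruence to a $q$-expansion congruence, whereupon comparing constant terms yields $\ell\mid \varphi(N)$ after clearing the harmless factor of $24$ (permissible since $\ell\geq 5$). Making this rigorous is where Ribet's exact sequence enters: it relates the mod-$\ell$ congruence module between the Eisenstein and cuspidal parts of $J_0(N)$ at the maximal ideal $(\ell,\,T_q-q-1,\,U_p-1)$ to a component-group order dividing $\varphi(N)$ up to powers of $2$ and $3$. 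This generalizes Mazur's computation of the Eisenstein ideal at prime level and constitutes the key technical input.
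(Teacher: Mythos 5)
Your argument for (3) is essentially the paper's own: restricting $\brho_f$ to a decomposition group at $p\mid N$ gives $\epsilon\oplus\epsilon\chi_\ell$ with $\epsilon$ unramified and $\epsilon(\Frob_p)=a_p$, and comparing with the global $\mathbbm{1}\oplus\chi_\ell$ at $\Frob_p$ yields $1+p\equiv a_p(1+p)\modl$, so $a_p=-1$ forces $p\equiv-1\modl$ once $\ell>2$. That part is fine.

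For (1) there is a real gap. You assert, ``By (3), the mod-$\ell$ eigensystem of $f$ is realized by some such $E_T$,'' but this is precisely what needs proof, and (3) does not supply it. What (3) tells you is that \emph{if} the eigensystem matched some $E_T$ then the signs would be compatible; it says nothing about whether a cuspidal mod-$\ell$ Eisenstein eigensystem at level $N$ must actually occur among the characteristic-zero Eisenstein eigenforms of the same level. Indeed, the whole content of $s\geq1$ is that the ``missing'' eigensystem $E_\emptyset$ (all $U_{p_i}\equiv p_i$) cannot arise from a cusp form, and you have simply asserted this. The paper instead argues directly: assuming such an $f$ exists, by the $q$-expansion principle $f\equiv F=\prod_i[p_i]^-(e)\modl$, and combining $F$ with the genuine mod-$\ell$ weight-$2$ forms $E=\prod_i[p_i]^+(e)$, $P$, $Q$, $R$, and $e+e_{pqr}$ produces a nonzero multiple of $e$ as a mod-$\ell$ modular form of weight $2$ and level $N$ prime to $\ell$; this contradicts the fact that $e\modl$ has filtration $\ell+1$. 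The filtration input is the key idea your route omits.

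For (2) you have the right skeleton (compare the constant term of $f$, which is $0$, with that of the relevant Eisenstein series) but you are worried about the wrong step and reach for the wrong tool. Upgrading equality of eigensystems to equality of $q$-expansions at all positive $q^n$ is automatic from multiplicativity of Hecke eigenform coefficients; Ribet's exact sequence and component groups play no role here. What is genuinely needed, and what the paper cites, is the lemma (Mazur, or Ohta) that a mod-$\ell$ modular form of weight $2$ and level $N$ prime to $\ell$ with constant $q$-expansion must vanish. With that lemma, $G-f\equiv(-1)^{t+1}\varphi(N)/24$ modulo a maximal ideal above $\ell$, which forces $\ell\mid\varphi(N)$ since $\ell\geq5$. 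So you should replace the appeal to Ribet's exact sequence by that lemma; as written the proposal substitutes a much heavier and in fact irrelevant piece of machinery for the elementary fact actually used.
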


\begin{thm}[Sufficient conditions]\label{thm:sufficientmain}
Let $\ell\geq 5$ be a prime. Then, a $t$-tuple $(p_1, \dots, p_t)$ is admissible for $s$ if one of the following holds:
\begin{enumerate}
\item \label{th1} 
$s=t$ is odd and $\ell$ divides $\prod\limits_{i=1}^t (p_i-1)$;
\item \label{th2}
$s=t$ with even $t\geq 4$, and ``a complicated condition'' holds;
\item \label{th3}
$s+1=t$, $t$ is even, and $p_t \equiv -1 \modl$;
\item \label{th4}
$s=1$, $t\geq 2$, and $p_j \equiv -1 \modl$ for $2 \leq j \leq t$;
\item \label{th5} 
$s=2$, $t\geq 4$ is even, and $p_j \equiv -1 \modl$ for $3 \leq j \leq t$;
\item \label{th6} $s<t$, $t$ is even, $p_t \equiv -1 \modl$, and the $(t-1)$-tuple $(p_1, \dots, p_{t-1})$ is admissible for $s$;
\item \label{th7} $s<t$, $t$ is even, and the $(t-1)$-tuple $(p_2, \dots, p_t)$ is admissible for $(s-1)$.
\end{enumerate}
\end{thm}
\noindent (For the precise statement of the second case, see Theorem \ref{thm:s=t}; for the proof of the theorem, see Section \ref{sec:proof of 1.3}.) Note that in the (\ref{th6}) (resp. (\ref{th7})), we have $t-1 \geq s$ (resp. $s-1 \geq 1$) from our assumption on admissibility of the given $(t-1)$-tuples.

In most cases, the necessary conditions above are also sufficient for the admissibility. In particular, Theorem \ref{thm:sufficientmain} precisely characterizes admissible $t$-tuples for $s$ when $1\leq s \leq t\leq 4$, except in the cases $(s, t) \in \{(2, 2), ~(2, 3), ~(4, 4) \}$. In such exceptional cases, an extension of our methods allows us to obtain sufficient conditions, which are more complicated to state. We refer to Theorem \ref{thm:s2t2} (resp. Theorem \ref{thm:s2t3suff}, Theorem \ref{thm:s=t}) for the case $(s, t)=(2, 2)$ (resp. $(s, t)=(2, 3)$, $(s, t)=(4, 4)$).

In Theorem \ref{thm:sufficientmain} (\ref{th6}) and (\ref{th7}), we assume that $t$ is even because we use the geometry of Jacobians of Shimura curves in the proof. Therefore the discriminant of the quaternion algebra we consider has to be the product of an even number of distinct primes. In general, to study the case where $t$ is odd we need information about the kernel of the degeneracy map between Jacobians of Shimura curves (cf. \cite{R84}). Ribet conjectured the following.
\begin{conj}\label{conj:intro}
Let $D$ be the product of an even number of primes and let $p$ be a prime not dividing $D$.
Let $\gamma_p^{\mathrm{Sh}}$ denote the map
$$
\gamma_p^{\mathrm{Sh}} : J_0^D(1)\times J_0^D(1) \rightarrow J_0^D(p)
$$
induced by two degeneracy maps $\alpha_p$ and $\beta_p$. Then, the kernel of $\gamma_p^{\mathrm{Sh}}$ is an antidiagonal embedding of the Skorobogatov subgroup 
$$
\Sigma := \bigoplus_{\substack q \mid D}\Sigma_q
$$
of $J_0^D(1)$, where $\Sigma_q$ is the Skorobogatov subgroup of $J_0^D(1)$ at $q$ introduced in Appendix \ref{sec:Skorobogatov}.
\end{conj}
\noindent (See Section \ref{sec:notation} for any unfamiliar notation or terminology.) 

If this conjecture is true, then we can raise the level as above.

\begin{thm} \label{thm:1.5}
Let $\ell \geq 5$ be a prime. Assume that a $t$-tuple $(p_1, \dots, p_t)$ is admissible for $s$ and $t$ is even. 
Assume further that Conjecture \ref{conj:intro} holds. Then,
\begin{enumerate}
\item \label{thm:1.5.1}
a $(t+1)$-tuple $(p_1, \dots, p_t, p_{t+1})$ is admissible for $s$ if and only if $p_{t+1} \equiv -1 \modl$;

\item 
a $(t+1)$-tuple $(p_0, p_1, \dots, p_t)$ is admissible for $(s+1)$ if $s+2 \leq t$.
\end{enumerate}
\end{thm}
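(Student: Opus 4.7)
The plan is to adapt the Shimura-curve construction from parts (\ref{th5}) and (\ref{th6}) of Theorem \ref{thm:sufficientmain}---where one lifts an admissible $(t-1)$-tuple (with $t-1$ odd) to an admissible $t$-tuple (with $t$ even) via a quaternion algebra ramified at all $t$ chosen primes---so that it runs in the opposite direction, from an admissible $t$-tuple with $t$ even to an admissible $(t+1)$-tuple. The key move is to keep the discriminant equal to $D := p_1 \cdots p_t$ (still a product of an even number of primes, hence realizable by an indefinite quaternion algebra over $\Q$) and to raise only the auxiliary level of the Shimura curve by the new prime $p \nmid D$, where $p = p_{t+1}$ in part (1) and $p = p_0$ in part (2). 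This is exactly the setting covered by Conjecture \ref{conj:intro}, which identifies $\ker \gamma_p^{\mathrm{Sh}}$; supplying this information is precisely what was missing in parts (\ref{th5}) and (\ref{th6}).

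The concrete steps are as follows. First, transfer the newform $f$ on $\Gamma_0(N)$, $N=D$, witnessing admissibility of $(p_1, \ldots, p_t)$ to a Hecke eigenform $g$ on $J_0^D(1)$ by Jacquet--Langlands; since this preserves $\brho \simeq \mathbbm{1} \oplus \chi_{\ell}$ and the $U_{p_i}$-eigenvalues, $g$ carries the same admissibility data as $f$. Second, apply the short exact sequence
\[
0 \to \ker \gamma_p^{\mathrm{Sh}} \to J_0^D(1)\times J_0^D(1) \xrightarrow{\gamma_p^{\mathrm{Sh}}} J_0^D(p)
\]
and invoke Conjecture \ref{conj:intro} to identify $\ker \gamma_p^{\mathrm{Sh}}$ with the antidiagonally embedded Skorobogatov subgroup $\Sigma = \bigoplus_{q\mid D} \Sigma_q$. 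The Eisenstein-ideal / level-raising analysis used in parts (\ref{th5}) and (\ref{th6}) of Theorem \ref{thm:sufficientmain} then produces a $p$-new Hecke eigenform $h$ on $J_0^D(p)$ congruent to $g$ modulo $\ell$; the two choices of degeneracy component correspond to $U_p(h) = \pm 1$. For part (1) we pick the $-1$-component, which is why $p_{t+1} \equiv -1 \modl$ is needed (the converse implication being given by necessary condition (3)); for part (2) we pick the $+1$-component, which imposes no congruence on $p_0$. Finally, transfer $h$ back via Jacquet--Langlands to a newform $f'$ on $\Gamma_0(Np)$ witnessing admissibility of the enlarged tuple.

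The principal obstacle is to verify that the congruence produced by the degeneracy map really lands in the $p$-new quotient of $J_0^D(p)$ rather than being absorbed into $\ker \gamma_p^{\mathrm{Sh}}$; this is precisely where Conjecture \ref{conj:intro} is indispensable, since without a complete description of the kernel one cannot exclude that the Eisenstein congruence is killed by $\Sigma$. In part (2) the hypothesis $s+2 \leq t$ enters at exactly this step: it guarantees that the Hecke component of $J_0^D(1)$ on which $g$ lives has enough room for the $U_{p_0}=1$ level-raising to produce a form genuinely new at $p_0$, rather than merely recovering an old form from $(p_1,\ldots,p_t)$. Once the Skorobogatov description is granted, the remaining checks---reducibility of $\brho_{f'}$, triviality of the nebentypus, and the prescribed $U_q$-eigenvalues for $q \mid Np$---are routine Hecke-module computations parallel to those in the $t$ odd case already handled in Theorem \ref{thm:sufficientmain}.
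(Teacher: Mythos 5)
Your overall strategy is the right one: transfer to the Shimura curve $J_0^D(1)$ with $D = p_1\cdots p_t$, invoke Conjecture \ref{conj:intro} to identify $\ker\gamma_p^{\mathrm{Sh}}$ as the antidiagonal Skorobogatov subgroup $\Sigma$, and conclude newness at $p$ by showing the intersection $\Omega = J_0^D(p)_{p\hyp\old}\cap J_0^D(p)_{p\hyp\new}$ has nonzero torsion at the relevant Eisenstein maximal ideal. That much matches the paper. However, the mechanism you cite is not the one that actually runs. The proofs of Theorem \ref{thm:sufficientmain}~(\ref{th5}) and (\ref{th6}) use Ribet's exact sequence for component and character groups (\textsection\ref{sec:ribetexactsequence}), not the intersection analysis; here the paper instead replays the $\Sigma\subseteq\Sigma^\perp\subseteq\Delta$ filtration from \textsection\ref{sec:intersection}, with $\Omega=K^\perp/K$. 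Concretely: the $-1$-eigenspace of $\Omega$ for $U_p$ is $J_0^D(1)[T_p+p+1]$ up to 2-primary subgroups, and the $+1$-eigenspace is $(\Sigma^\perp)^+/\Sigma$. Part~(1) follows because $p\equiv -1\pmod\ell$ forces $T_p+p+1\in\m$, so this eigenspace has nonzero $\m$-torsion.

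The genuine gap is in your justification of $s+2\le t$ for part~(2). The statement that it gives ``enough room for the $U_{p_0}=1$ level-raising to produce a form genuinely new at $p_0$'' is not the argument and does not pin down where the hypothesis is actually used. What the paper needs is that $\Sigma$ is \emph{not} supported at $\m$, so that after localizing at $\m$ the filtration collapses and $\Delta^+_\m$ survives into $(\Sigma^\perp)^+_\m/\Sigma_\m$, i.e.\ into $\Omega^+_\m$. This is supplied by Proposition~\ref{prop:heckeskorobogatov}: on $\Sigma_q$ ($q\mid D$) the operator $U_q$ acts by $-1$ and $U_r$ for $r\mid D$, $r\ne q$, acts by $+1$. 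For $\m$ to fail to kill $\Sigma_q$ one would need $U_q+1\in\m$ and $U_r-1\in\m$ for all other $r\mid D$, i.e.\ exactly one index $j>s$. The hypothesis $s+2\le t$ guarantees there are at least two such indices, hence every summand $\Sigma_q$ is killed by some generator of $\m$, so $\Sigma[\m]=0$. Without this step the Eisenstein torsion in $\Delta^+$ could be entirely absorbed by $\Sigma$ and the level raising could fail. You should make this eigenvalue computation explicit rather than appealing to the (different) analysis in the $t$-odd level-raising theorems.
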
  

These level-raising methods give us the following necessary and sufficient conditions for the admissibility.

\begin{thm}\label{thm:1.6}
Let $\ell \geq 5$ be a prime. Assume that Conjecture \ref{conj:intro} holds. Then, 
a $t$-tuple $(p_1, \dots, p_t)$ is admissible for $s$ when
\begin{enumerate}
\item 
$s\neq t$ with even $t$ if and only if $p_{s+1}\equiv \cdots \equiv p_t \equiv -1 \modl$;

\item 
$s+2 \leq t$ with odd $t$ if and only if $p_{s+1}\equiv \cdots \equiv p_t \equiv -1 \modl$.
\end{enumerate}
\end{thm}

Combining all the results above, the only missing cases are those with 
$s+1=t$ for $t\geq 5$ odd\footnote{In the cases $s=t$ with $t$ even and $(s, t)=(2, 3)$, we only have a mild sufficient condition, which may not be optimal.}.
We may obtain a sufficient condition for these cases using the techniques in the proof of Theorem \ref{thm:s2t3suff}, but we will not pursue this here. 
Instead, we discuss how Conjecture \ref{conj:intro} is used to remove an assumption in Theorem \ref{thm:s2t3suff} in Section \ref{sec: remaining cases}.
\vspace{3mm}

The organization of this article is as follows. 
In Section \ref{sec:ribetwork}, we introduce Ribet's work, which was announced in his CRM lecture \cite{R10}. 
In Section \ref{sec:levelraising}, we study level raising methods that are main tools of this article. 
In Section \ref{sec:proofs}, we present a complete proof of Ribet's work on admissible tuples using the results in the previous section. 
In Sections \ref{sec:admtuples} and \ref{sec:admtriples}, we discuss generalizations of Ribet's work and give some examples of admissible triples for $s=2$ and of admissible quadruples for $s=4$. 
In Section \ref{sec:conjecture}, we prove Theorems \ref{thm:1.5} and \ref{thm:1.6}.
Finally in the appendices, we provide some known results on arithmetic of Jacobian varieties of modular curves and Shimura curves. We include some proofs of them for reader's convenience.

\subsection*{Acknowledgements}
The author would like to thank his advisor Kenneth Ribet for his inspired suggestions and comments, which led him writing this paper. 
He is also grateful to Seunghwan Chang, Yeansu Kim, Sug Woo Shin, and Gabor Wiese for many suggestions toward the correction and improvement of this article; and to Chan-Ho Kim for providing examples in Section \ref{sec:admtriples}.
Finally, he would like to thank the referee for numerous
corrections, suggestions and valuable remarks.

\subsection{Notation}\label{sec:notation}
Let $B$ be an indefinite quaternion algebra over $\Q$ of discriminant $D$ with a fixed embedding $\phi : B \inj M_2(\R)$. 
(Hence, $D$ is the product of an even number of distinct primes.)
Let $\cO$ be an Eichler order of level $N$ of $B$, and set $\Gamma_0^D(N)=\cO^{\times, 1}$, the set of (reduced) norm 1 elements in $\cO$.
Let $X_0^D(N)$ be the Shimura curve for $B$ with $\Gamma_0^D(N)$ level structure. Let $J_0^D(N)$ be the Jacobian of $X_0^D(N)$. If $D=1$, $X_0(N)=X_0^1(N)$ denotes the modular curve for $\Gamma_0(N)$ and $J_0(N)=J_0^1(N)$ denotes its Jacobian variety. (Note that if $D\neq 1 $, $X_0^D(N)(\C) \simeq {\phi(\Gamma_0^D(N))} \backslash \fH$, where $\fH$ is the complex upper half plane.)

We can consider the N\'eron model of $J_0^D(N)$ over $\Z$, which is denoted by $J_0^D(N)_{/{\Z}}$ (cf. \cite{Bu97, Ce76, DR73, Dr76, Hm, Ig59, KM85, Ra70}). We denote by $J_0^D(N)_{/{\F_p}}$ the special fiber of $J_0^D(N)_{/{\Z}}$ over $\F_p$. (For instance, if $p$ exactly divides $N$ (resp. $D$), it is given by the Deligne-Rapoport (resp. Cerednik-Drinfeld) model and the theory of Raynaud. Note that we will only consider these cases.)
For a Jacobian variety $J$ over $\Q$, we denote by
$X_p(J)$ (resp. $\Phi_p(J)$) the character (resp. component) group of its special fiber $J_{/{\F_p}}$ of the N\'eron model $J_{/\Z}$. 

Let $T_n$ be the $n^\th$ Hecke operator acting on $J_0^D(N)$. We denote by $\T^D(N)$ the $\Z$-subalgebra of the endomorphism ring of $J_0^D(N)$ generated by
all $T_n$. In the case where $D=1$, we often denote $\T^1(N)$ by $\T(N)$ .
If $p$ divides $DN$, we often denote by $U_p$ the Hecke operator $T_p$ acting on $J_0^D(N)$. 
For a prime $p$ dividing $N$, we denote by $w_p$ the Atkin-Lehner involution acting on $J_0^D(N)$. 
For a maximal ideal $\m$ of a Hecke ring $\T$, we denote by $\T_{\m}$ the completion of $\T$ at $\m$, i.e.,
$$
\T_{\m}:= \lim_{\leftarrow n} \T/{\m^n}.
$$

There are two degeneracy maps $\alpha_p, \beta_p : X_0^D(Np) \rightarrow X_0^D(N)$ for a prime $p$ not dividing $DN$.
Here, $\alpha_p$ (resp. $\beta_p$) is the one induced by ``forgetting the level $p$ structure'' (resp. by ``dividing by the level $p$ structure''). 
For any divisor $M$ of $N$, we denote by $J_0^D(N)_{M\hyp\new}$ the $M$-new subvariety of $J_0^D(N)$ (cf. \cite{R84}).
We also denote by $\T^D(N)^{M\hyp\new}$ the image of $\T^D(N)$ in the endomorphism ring of $J_0^D(N)_{M\hyp\new}$. If $M=N$, we define $J_0^D(N)_{\new} :=J_0^D(N)_{N\hyp\new}$ and
$\T^D(N)^{\new}:=\T^D(N)^{N\hyp\new}.$ A maximal ideal of $\T^D(N)$ is called \textit{$M$-new} if its image in $\T^D(N)^{M\hyp\new}$ is still maximal; and it is called \textit{new} if it is $N$-new.
\vv

From now on, we always assume that $\ell\geq 5$ is a prime and $N$ is a square-free integer prime to $\ell$. For such an integer $N$, we define two arithmetic functions $\varphi(N)$ and $\psi(N)$ by
$$
\varphi(N):=\prod_{p\mid N ~\text{primes}} (p-1)\quad\text{and}\quad \psi(N):=\prod_{p\mid N ~\text{primes}} (p+1).
$$ 

Since we focus on Eisenstein maximal ideals of residue characteristic $\ell\geq 5$, 
we introduce the following notation for convenience.
\begin{notation}\label{notation}
We say that for two natural numbers $a$ and $b$, $a$ is equal to $b$ \textit{\upto} if $a=b \times 2^x 3^y$ for some integers $x$ and $y$. For two finite abelian groups $A$ and $B$, we denote by $A\sim B$ if $A_{\ell}:=A\otimes \Z_{\ell}$, the $\ell$-primary subgroup of $A$, is isomorphic to $B_{\ell}$ for all primes $\ell$ not dividing $6$.
\end{notation}

We denote by $\chi_{\ell}$ the mod $\ell$ cyclotomic character, i.e.,
$$
\chi_{\ell} : \Gal(\overline{\Q}/{\Q}) \twoheadrightarrow \Gal({\Q(\zeta_{\ell})}/{\Q}) \simeq (\Z/{\ell\Z})^{\times} \rightarrow \F_{\ell}^{\times},
$$
where $\zeta_{\ell}$ is a primitive $\ell^\th$  root of unity. Note that $\chi_{\ell}$ is unramified outside $\ell$ and $\chi_{\ell}(\Frob_p) \equiv p \modl$ for a prime $p \neq \ell$, where $\Frob_p$ denotes an arithmetic Frobenius element for $p$ in $\Gal(\overline{\Q}/{\Q})$.

For an ideal $\m$ of $\T$ and a variety $A$ over a field $K$ which is a $\T$-module, $A[\m]$ denotes the kernel of $\m$ on $A$, i.e.,
$$
A[\m] := \{x \in A(\overline{K}) ~:~ Tx = 0~\text{ for all }~ T \in \m  \}.
$$

Finally, we denote by $\cI_0^D(N)$ the (minimal) Eisenstein ideal in the Hecke ring $\T^D(N)$, i.e.,
$$
\cI_0^D(N) := (T_r-r-1~:~\text{ for primes } r \nmid DN) \subset \T^D(N).	
$$
When $D=1$, we often denote it by $\cI_0(N)$;
when we denote $\T^D(N)$ by $\T$, we also denote $\cI^D_0(N)$ by $\cI_0$.
\vv

\section{Ribet's work}\label{sec:ribetwork}
In this section, we discuss Ribet's results on reducible representations arising from modular forms of weight two for $\Gamma_0(N)$ with a square-free integer $N$. For their proofs, see Section \ref{sec:proofs}.

\subsection{Reducible mod $\ell$ Galois representations arising from newforms}
Let $\ell \geq 5$ be a prime and let $N$ be a square-free integer prime to $\ell$. 
Let $f$ be a newform of weight 2 for $\Gamma_0(N)$.
Assume that $\ov{\rho}_f$, the semisimplified mod $\ell$ Galois representation attached to $f$, is reducible. Then, we have the following.
\begin{prop}[Ribet]\label{prop:reducible}
$\ov{\rho}_f$ is isomorphic to $\mathbbm{1} \oplus \chi_{\ell}$.
\end{prop}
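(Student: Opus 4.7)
The plan is to show that the two Jordan-H\"older constituents of $\brho_f$ must be $\mathbbm{1}$ and $\chi_\ell$. Since $\brho_f$ is semisimple and reducible, I would write $\brho_f \simeq \alpha \oplus \beta$ for two characters $\alpha, \beta \colon \GQ \to \Fbar_\ell^\times$. Because $f$ has weight $2$ and trivial nebentypus, $\det \brho_f = \chi_\ell$, so $\alpha\beta = \chi_\ell$; it thus suffices to identify one constituent.

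Next I would control the ramification of $\alpha$ and $\beta$. Away from $N\ell$ the representation $\brho_f$ is unramified. At a prime $p \mid N$ with $p \neq \ell$, since $N$ is square-free the local representation $\rho_f|_{\Gal(\Qpbar/\Q_p)}$ is (a twist of) Steinberg, so inertia acts unipotently and hence trivially on any Jordan-H\"older quotient. Therefore $\alpha$ and $\beta$ are unramified outside $\ell$. By Kronecker-Weber, any such character factors through $\Gal(\Q(\zeta_{\ell^\infty})/\Q) \simeq \Z_\ell^\times$, and because the target $\Fbar_\ell^\times$ has order prime to $\ell$, it further factors through the tame quotient $(\Z/\ell\Z)^\times$, on which $\chi_\ell$ is a generator. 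So $\alpha = \chi_\ell^a$ and $\beta = \chi_\ell^b$ with $a, b \in \Z/(\ell-1)\Z$ and $a+b \equiv 1 \pmod{\ell-1}$.

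Finally I would exploit that $\ell \nmid N$ and that $f$ has weight $2$: $\rho_f|_{\Gal(\Qellbar/\Q_\ell)}$ is crystalline with Hodge-Tate weights $\{0,1\}$, so $\brho_f|_{\Gal(\Qellbar/\Q_\ell)}$ arises as the generic fiber of a finite flat group scheme over $\Z_\ell$. Raynaud's classification of simple finite flat group schemes of order $\ell$ over $\Z_\ell$ then forces the characters of inertia at $\ell$ appearing in $\brho_f$ to lie in $\{\mathbbm{1}, \chi_\ell|_{I_\ell}\}$, i.e.\ $a, b \in \{0,1\} \pmod{\ell-1}$. Combined with $a+b \equiv 1 \pmod{\ell-1}$ (and $\ell \geq 5$, so no wrap-around), this forces $\{a, b\} = \{0, 1\}$, yielding $\brho_f \simeq \mathbbm{1} \oplus \chi_\ell$. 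The main obstacle is this final step: ruling out exponents other than $0$ and $1$ is precisely what the finite-flat input at $\ell$ is needed for; the first two paragraphs are formal consequences of well-known local properties of $\rho_f$.
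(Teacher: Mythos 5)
Your proof is correct and follows essentially the same route as the paper: both use square-freeness of $N$ to make the constituents unramified outside $\ell$, hence powers of $\chi_\ell$, and then pin down the exponents as $\{0,1\}$ via the local structure at $\ell$ (the paper cites Edixhoven \cite{Ed92}, whereas you invoke the finite-flat/Raynaud classification directly, which is the same underlying input). Your addition of the explicit determinant relation $\alpha\beta = \chi_\ell$ is a clean touch but not a genuinely different argument.
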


This is well-known, and even more is true: for a general weight $k$ with $2 \leq k \leq \ell+1$, we have $\overline{\rho}_f \simeq \mathbbm{1} \oplus \chi_{\ell}^{k-1}$. For a proof of this statement, see 
\cite[Proposition 3.1]{BM1}.

\subsection{Admissible tuples}
Fix a prime $\ell\geq 5$.
Fix $t$, the number of prime factors of $N$, and $s \in \{1, \dots, t\}$, the number of plus signs.
(By Theorem \ref{thm:admnec}, the case $s=0$ is excluded.)

We seek to characterize $t$-tuples $(p_1, \dots, p_t)$ of distinct primes so that there is a newform $f$ of level $N=\prod\limits_{i=1}^t p_i$ with weight two and trivial character such that
\begin{enumerate}
\item $\ov{\rho}_f \simeq \mathbbm{1} \oplus \chi_{\ell}$;
\item $U_{p_i} f = f$ for $1 \leq i \leq  s$;
\item $U_{p_j} f = -f$ for $s+1 \leq j \leq t$.
\end{enumerate}
We say these $t$-tuples \textit{admissible for $s$}. When we discuss admissible tuples, we always fix a prime $\ell\geq 5$ and assume that the level $N$ is prime to $\ell$. 

\subsection{Results on admissible tuples}
In this subsection, we introduce the work of Ribet on admissible tuples, which was announced in his CRM lecture \cite{R10}. 
For a proof, see Section \ref{sec:proofs}.

First, Ribet provided some necessary conditions for admissibility: see Theorem \ref{thm:admnec}.
A complete proof of it is already published: see \cite[Theorem 2.6 (ii)]{BD1}.
Let us remark that it can be generalized to the case where $\ell=3$. The first and last statements are valid without any change; however, the second statement can be improved as follows: ``If $s=t$, then $9 \mid \varphi(N)$''. For the proof of the first and second statements for $\ell=3$, see \cite[Theorem 1.4 and Proposition 5.5]{Yoo15a}. For the proof of the last statement for $\ell=3$, see the proof of \cite[Theorem 2.6(ii)(a)]{BD1}, which is valid without any change.
\vv

Now, assume that a $t$-tuple $(p_1, \dots, p_t)$ is admissible for $s$. (So, $1\leq s\leq t$.) By Theorem \ref{thm:admnec}, if $s=t$ then $\ell$ divides $\varphi(N)$; and if $s+1=t$ then $p_t \equiv -1 \modl$. Ribet proved that they are also sufficient when $s$ is odd.

\begin{thm}[Ribet] \label{thm:admsuff}
A $t$-tuple $(p_1, \dots, p_t)$ is admissible for $s$ if one of the following holds:
\begin{enumerate}
\item if $s=t$ and $s$ is odd, then $\ell \mid \prod\limits_{i=1}^t (p_i-1)$;
\item if $s+1=t$ and $s$ is odd, then $p_t \equiv -1 \modl$.
\end{enumerate}
\end{thm}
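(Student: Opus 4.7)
The plan is to produce, in each case, an Eisenstein maximal ideal $\m$ in an appropriate Hecke algebra (either $\T(N)$ or $\T^D(M)$ for a suitable Shimura-curve Jacobian, possibly $D = 1$) such that $J[\m] \neq 0$, and then to extract the required newform from $J[\m]$ via a standard Deligne--Serre-type lifting. The semisimplification $\brho_f \simeq \mathbbm{1} \oplus \chi_\ell$ will then be forced by Proposition~\ref{prop:reducible}, while the prescribed $U_{p_i}$-eigenvalues will come directly from the generators of $\m$. The assumption that $s$ is odd is what allows us to use an indefinite quaternion algebra $B$ whose discriminant is an even-cardinality subset of $\{p_1, \ldots, p_s\}$; via Jacquet--Langlands this automatically enforces $U_{p} f = +f$ at every $p \mid D$, and reduces the Eisenstein-ideal analysis to a smaller, cleaner level.

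For case (1), with $s = t$ odd and $\ell \mid \varphi(N)$, I would locate $\m$ in $\T(N)^{\new}$ by analyzing the character group $X_{p_1}(J_0(N))$ at a prime $p_1 \mid N$ (or equivalently the cuspidal divisor class group at $p_1$). Via the Deligne--Rapoport description of the special fibre together with Grothendieck's theorem, the Eisenstein quotient with the all-plus sign pattern $(U_{p_1}, \ldots, U_{p_t}) = (+1, \ldots, +1)$ has order that, up to powers of $2$ and $3$, is divisible by $\varphi(N) = \prod_{i=1}^{t}(p_i - 1)$; the hypothesis $\ell \mid \varphi(N)$ therefore yields non-trivial mod-$\ell$ $\m$-torsion, and the oddness of $s = t$ ensures that the resulting Eisenstein eigenform lies in the new subspace rather than being an oldform pulled back from a smaller level.

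For case (2), with $s$ odd and $t = s + 1$, I would apply Ribet's level-raising sequence at $p_t$ (developed in \textsection \ref{sec:levelraising}) to produce $\m$ in $\T^D(p_t)^{\new}$ for a chosen even-cardinality subset $D \subseteq \{p_1, \ldots, p_s\}$. Since the reducible representation $\mathbbm{1} \oplus \chi_\ell$ has trace $p_t + 1$ at $\Frob_{p_t}$, Ribet's condition for producing a $p_t$-new form with $U_{p_t} = -1$ reads $p_t + 1 \equiv -(p_t + 1) \pmod \ell$, which for $\ell \geq 5$ is exactly $p_t \equiv -1 \pmod \ell$, our hypothesis. The character group of $J_0^D(p_t)$ at $p_t$ decomposes along the $U_{p_t}$-sign, and its Eisenstein component in the $-1$-eigenspace has order divisible by $(p_t + 1)$ up to powers of $2$ and $3$, giving the required non-trivial $\m$-torsion under the hypothesis. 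Jacquet--Langlands then transfers this back to a newform in $S_2(\Gamma_0(N))^{\new}$ with $U_{p_i} f = +f$ for every $p_i \mid D$; the remaining plus-signs (for $p_i \in \{p_1, \ldots, p_s\} \setminus D$, in particular when $D = 1$) are either built into the choice of $\m$ or follow from the analogous character-group calculation at those primes.

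The main technical obstacle will be the explicit computation of the $\ell$-part of the relevant Eisenstein quotient of the character or component group with its prescribed sign structure, and the bookkeeping of Atkin--Lehner and $U_p$ signs under the Jacquet--Langlands correspondence. These computations are the subject of \textsection \ref{sec:levelraising} and the appendices, combining Ribet's exact sequence with the Deligne--Rapoport and Cerednik--Drinfeld descriptions of the special fibres of the Jacobians involved; with them in hand, the argument above locates $\m$ and produces the desired newform.
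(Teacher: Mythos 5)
Your proposal correctly identifies the general machinery (Shimura curves, Jacquet--Langlands, component and character groups, Ribet's exact sequence), but several concrete claims are wrong or mislocate the key computation, and these are not just bookkeeping slips --- they are exactly where the parity hypothesis on $s$ enters.

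First, the assertion that Jacquet--Langlands ``automatically enforces $U_p f = +f$ at every $p \mid D$'' is false. For a quaternion algebra of discriminant $D$, both the Steinberg representation and its unramified quadratic twist are discrete series at $p \mid D$, so both signs $U_p = \pm 1$ occur among forms transferred from $B^\times$. In the paper the $+1$ sign at primes dividing $D$ is not a consequence of the transfer; it is a property of the component group $\Phi_p(J_0^D(p))$, on which Proposition~\ref{prop:heckecomp} shows $U_r$ acts by $+1$ for every $r \mid Dp$. This is the engine that forces the all-plus sign pattern, and your proof would need to say so.

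Second, for case (1) you propose to ``analyze the character group $X_{p_1}(J_0(N))$ (or equivalently the cuspidal divisor class group at $p_1$)'' and then argue that ``the oddness of $s = t$ ensures the eigenform is new.'' That last step is a gap, not a deduction: working inside $J_0(N)$ and finding Eisenstein $\m$-torsion in some torsion group gives no control over $p_i$-newness for $i \geq 2$. What actually happens in the paper is a change of curve: one works with the Shimura curve Jacobian $J_0^{D}(p_1)$, where $D = N/p_1$ has $t-1$ primes (even, since $t = s$ is odd). By Jacquet--Langlands the Hecke ring $\T^D(p_1)$ is already the $D$-new part, and the component group $\Phi_{p_1}(J_0^D(p_1))$ is automatically a $\T^D(p_1)^{p_1\textrm{-}\new}$-module, so the Eisenstein $\m$-torsion found there (its order is $\varphi(N)$ up to products of powers of $2$ and $3$, by Proposition~\ref{prop:ordercomp}) is supported on $\T(N)^{\new}$ for free. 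That is the mechanism --- not a general principle about the parity of $t$. Also, ``cuspidal divisor class group'' is the wrong object here: $X_0^D(p_1)$ with $D \neq 1$ is compact and has no cusps, and even for $J_0(N)$ the cuspidal group and the character group at a prime are different (though related) Eisenstein modules.

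Third, for case (2) the Shimura curve you work with is at the wrong level. You take $D$ an even-cardinality subset of $\{p_1,\ldots,p_s\}$ and study $\T^D(p_t)$, but that Hecke algebra sees forms of level $D p_t$, a proper divisor of $N$. The paper instead takes the discriminant to be all of $N = D p_1 p_t$ (with $D = N/(p_1 p_t)$, which has $s-1$ primes --- even because $s$ is odd), so the discriminant has $t = s+1$ prime factors and the Shimura curve has trivial level. The relevant object is then $\Phi_{p_t}(J_0^N(1))$, and the key computation is Ribet's exact sequence (Theorem~\ref{thm:Ribetexactsequence}): its cokernel term $C$ is $\sim \Z/(p_t+1)\Z$ (Proposition~\ref{prop:degcomp}), is a quotient of $\Phi_{p_t}(J_0^N(1))$, and is annihilated by the Eisenstein ideal with $U_{p_i}-1$ ($i\le s$) and $U_{p_t}+1$ (Corollary~\ref{cor:cokernelcomp}). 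The hypothesis $p_t \equiv -1 \pmod \ell$ makes $C_\ell \neq 0$, so the ideal is proper in $\T^N$, hence new via Jacquet--Langlands. Your proposal misattributes this calculation to ``the character group of $J_0^D(p_t)$ at $p_t$'' and its $U_{p_t}$-sign decomposition; the sign decomposition in question is on $\Psi = \Phi_{p_t}(J_0^N(1))$, not on a character group at the same level, and the factor $(p_t+1)$ enters through the cokernel $C$, not through an Eisenstein component of $X_{p_t}(J_0^D(p_t))$.
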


By the Theorems \ref{thm:admnec} and \ref{thm:admsuff}, a single $(p)$ is admissible for $s=1$ if and only if $p \equiv 1 \modl$; and a pair $(p, q)$ is admissible for $s=1$ if and only if $q \equiv -1 \modl$;
on the other hand, we only have a necessary condition for a pair $(p, q)$ to be admissible for $s=2$, which is $\ell \mid \varphi(pq)=(p-1)(q-1)$. Without loss of generality, we may assume that $p \equiv 1 \modl$. Then, we have the following.
\begin{thm}[Ribet] \label{thm:s2t2}
Suppose that $p\equiv 1 \modl$. Then, 
a pair $(p, q)$ is admissible for $s=2$ if and only if either one of the following holds:
\begin{itemize}
\item
$q \equiv 1 \modl$. 
\item
$q$ is an $\ell^\th$  power modulo $p$.
\end{itemize}
\end{thm}

When $t$ is even, Ribet proved the following level raising theorem.
\begin{thm}[Ribet] \label{thm:ribetraising}
Assume that a $(t-1)$-tuple $(p_1, \dots, p_{t-1})$ is admissible for $s$. Assume further that $t$ is even.
Then, a $t$-tuple $(p_1, \dots, p_t)$ is admissible for $s$ if and only if $p_t \equiv -1 \modl$.
\end{thm}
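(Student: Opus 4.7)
The \emph{necessity} is immediate from Theorem \ref{thm:admnec}(3): admissibility of the $t$-tuple with $s \leq t-1$ forces $p_t \equiv -1 \modl$. For \emph{sufficiency}, my plan is to level-raise from $N' := p_1 \cdots p_{t-1}$ up to $N := N' p_t$ by working on a Shimura curve whose discriminant has an even number of prime factors; the hypothesis that $t$ is even is precisely what makes this possible.

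Start from a newform $f$ at level $N'$ witnessing admissibility of the $(t-1)$-tuple; this produces a maximal ideal $\m'$ of $\T(N')$ containing the Eisenstein ideal $\cI_0$, satisfying $U_{p_i} \equiv a_i \pmod{\m'}$ with $a_i = +1$ for $i \leq s$ and $a_i = -1$ for $s < i \leq t-1$, and having $\brho_{\m'} \simeq \mathbbm{1} \oplus \chi_{\ell}$. Setting aside the low-dimensional case $t = 2$ (already contained in Theorem \ref{thm:admsuff}(2)), we may assume $t \geq 4$ and pick two distinct indices $j, k \in \{1, \dots, t-1\}$. With $D := p_j p_k$ (of even size) and $M := N'/(p_j p_k)$, the quaternion algebra of discriminant $D$ is indefinite, and Jacquet--Langlands transports $\m'$ (which is new at both $p_j$ and $p_k$) to a maximal ideal $\widetilde{\m}' \subset \T^D(M)$ of the Hecke algebra acting on $J_0^D(M)$, preserving $\brho$ and all $U_{p_i}$-eigenvalues.

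Next I would perform Ribet-style level raising at $p_t$ on the Shimura curve $X_0^D(Mp_t)$. The two degeneracy maps $\alpha_{p_t}, \beta_{p_t} : X_0^D(Mp_t) \to X_0^D(M)$ induce a homomorphism
\[
\gamma_{p_t} : J_0^D(M) \times J_0^D(M) \longrightarrow J_0^D(Mp_t),
\]
whose cokernel surjects onto the $p_t$-new quotient of $J_0^D(Mp_t)$. The key assertion to be verified is that $\widetilde{\m}'$ extends to a $p_t$-new maximal ideal $\widetilde{\m} := (\widetilde{\m}', U_{p_t} + 1)$ of $\T^D(Mp_t)$ whenever the congruence $a_{p_t}(f) \equiv -(p_t + 1) \modl$ holds. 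Since $\brho_f \simeq \mathbbm{1} \oplus \chi_{\ell}$ is unramified at $p_t$, we have $a_{p_t}(f) \equiv \tr \brho_f(\Frob_{p_t}) \equiv 1 + p_t \modl$, so the congruence reduces to $2(p_t + 1) \equiv 0 \modl$; because $\ell \geq 5$, this is exactly $p_t \equiv -1 \modl$---our hypothesis. A final application of Jacquet--Langlands transfers $\widetilde{\m}$ back to a maximal ideal of $\T(N)^{\new}$, producing a newform $g$ at level $N$ with $\brho_g \simeq \mathbbm{1} \oplus \chi_{\ell}$ and the prescribed $U_{p_i}$-eigenvalues.

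The hard part will be executing the level-raising step for the reducible representation $\brho = \mathbbm{1} \oplus \chi_{\ell}$. The classical Ihara's lemma, on which Ribet's level raising rests, can fail at Eisenstein maximal ideals on the modular curves $X_0(N)$ due to contributions from cusps and from component groups at bad primes. Passing to the Shimura curve $X_0^D(M)$ (which has no cusps) removes the cuspidal obstruction, but one must still carefully track the Eisenstein parts of the character group $X_{p_t}(J_0^D(Mp_t))$ and the component group $\Phi_{p_t}(J_0^D(Mp_t))$---exactly the analysis carried out in Section \ref{sec:levelraising} and the appendices. The evenness of $|D|$, and hence of $t$, is forced by the requirement that the quaternion algebra split at the archimedean place, so that $X_0^D(M)$ is a genuine Shimura curve.
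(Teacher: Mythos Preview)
Your outline has the right shape—transfer to a Shimura curve via Jacquet--Langlands, then raise the level at $p_t$—but two genuine gaps separate it from a proof, and the paper's execution differs in a way that matters. First, your final Jacquet--Langlands step is incorrect as stated: a $p_t$-new maximal ideal of $\T^D(Mp_t)$ with $D = p_jp_k$ transfers only to a $Dp_t$-new maximal ideal of $\T(N)$, not a fully new one. Nothing rules out $\widetilde\m$ being $p_m$-old for some $p_m \mid M$, and since $\mathbbm{1}\oplus\chi_\ell$ is unramified everywhere this is a real possibility. Second, you do not execute the level-raising step, and the route you sketch (analyzing the old/new intersection $\Omega = \Sigma^{\perp}/\Sigma$ on $J_0^D(Mp_t)$) would require knowing the kernel of $\gamma_{p_t}^{\mathrm{Sh}}$ for a Shimura curve of positive level—this is precisely the open Conjecture~\ref{conj:intro}, so the argument cannot be completed along those lines.

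The paper avoids both problems by a different choice of $D$ and a different tool. It sets $p := p_1$, $q := p_t$, and $D := \prod_{j=2}^{t-1} p_j$, putting \emph{all} $t-2$ middle primes into the discriminant (still an even count). Then Ribet's exact sequence (\S\ref{sec:ribetexactsequence}), not an Ihara/intersection argument, gives an injection
\[
(X/(T_q + q + 1)X)_\ell \hookrightarrow \Psi^-_\ell, \qquad X := X_{p}(J_0^D(p)),\quad \Psi := \Phi_q(J_0^{Dpq}(1)).
\]
Since $\m$ is new, $\T^D(p)^{p\hyp\new}$ acts faithfully on $X$, so $X/\m X \neq 0$; and $p_t \equiv -1 \modl$ forces $T_q + q + 1 \in \m$, so $\Psi^-$ has support at the desired ideal $\fn$. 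The point of the maximal $D$ is that $Dpq = N$, so $\Psi$ is a module over $\T^N = \T^N(1) \simeq \T(N)^{\new}$, and $\fn$ being maximal there is \emph{automatically} new—no further argument about old primes is required.
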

Here, $1\leq s \leq t-1$ by our definition on admissibility for $s$.
\vv

\section{Level raising methods} \label{sec:levelraising}
In his article \cite{R84}, Ribet studied the kernel of the map
$$
\gamma_p : J_0(N) \times J_0(N) \rightarrow J_0(Np)
$$
induced by two degeneracy maps. He also computed the intersection of the $p$-new subvariety and the $p$-old subvariety of $J_0(Np)$. Diamond and Taylor generalized Ribet's result \cite{DT94}, and they determined the non-optimal levels of irreducible mod $\ell$ modular representations by level raising methods. However we cannot directly use their methods to find non-optimal levels of $\mathbbm{1} \oplus \chi_{\ell}$. The reason is basically that the kernels of their level raising maps are ``Eisenstein''  and we do not know how $U_p$ acts  on the kernels for primes $p$ dividing the level. Instead, we introduce new level raising methods based on an exact sequence
due to Ribet. 

\subsection{Equivalent condition}
Let $\m$ be a maximal ideal of the Hecke ring $\T(N)$ of residue characteristic $\ell$. Let $\rho_{\m}$ be the semisimplified mod $\ell$ Galois representation attached to $\m$. Assume that $p$ is a prime not dividing $N$. We say \textit{level raising occurs for $\m$} (from level $N$ to level $Np$) 
if there is a maximal ideal 
$\fn$ of $\T({Np})$ such that
\begin{enumerate}
\item $\fn$ is $p$-new and
\item $\rho_{\fn}$, the semisimplified mod $\ell$ representation attached to $\fn$, is isomorphic to $\rho_{\m}$.
\end{enumerate}

Let $\T:=\T({Np})$. Note that a maximal ideal $\m$ of $\T(N)$ can be regarded as a maximal ideal $\wt{\m}$ of $\T^{p\hyp\old}$ once we replace an element $T_p- a_p \in \m$ by $U_p-\g$, where $\g$ is a root of the polynomial $X^2-a_p X+p \pmod \m$\footnote{Let $R$ be the common subring of $\T(N)$ and $\T^{p\hyp\old}$, generated by $T_r$ for primes $r \neq p$  (cf. \cite[\textsection 7]{R90}). 
(Two rings are considered as subrings of the endomorphism ring of $J_0(N)^2$.) 
Then, $\T(N)=R[T_p]$ and $\T^{p\hyp\old}=R[U_p]$.
Let $V=J_0(N)[\m]:=\{ x \in J_0(N)(\ov{\Q}) : T x = 0 \text{ for all } T \in \m \}$, which is non-zero, and consider $V^2$ as a submodule of $J_0(N)^2$. Then we can find a subspace $U$ of $V^2$ which is fixed by $U_p-\g$ and isomorphic to $V$. Let $a(r)$ be the image of $T_r$ in $\T(N)/\m$ for a prime $r$ 
(i.e. $\m:=(T_r-a(r) : \text{for all primes } r)$).
Then, $T_r$ acts on $U$ as $a(r)$, i.e., $U$ is stable under all the Hecke operators. Furthermore, $U_p$ acts on $U$ as $\g$. Therefore, as subrings of $\End(U)$ we have
\[
\T(N)/\m =R[T_p]/{(T_r-a(r) : \text{for all primes } r)}\simeq R[U_p]/{(U_p-\g, T_r-a(r) : \text{for all primes } r \neq p)}= \T^{p\hyp\old}/\wt{\m},
\]
where $\wt{\m}:=(U_p-\g, T_r-a(r) : \text{for all primes } r \neq p)$. 
For our applications, $a(p) \equiv 1+p \pmod \m$ and hence $\g$ is either $1$ or $p$. And $U=\{(x,-x) : x \in V\}$ or $\{(px, -x) : x \in V\}$ for $\g = 1$ or $p$, respectively.}.
By abusing notation, let $\m$ be a maximal ideal of $\T$ whose image in $\T^{p\hyp\old}$ is $\wt{\m}$. If level raising occurs for $\m$, $\m$ is also $p$-new; in other words, the image of $\m$ in $\T^{p\hyp\new}$ is also maximal. 

To raise the level, Ribet showed that all congruences between $p$-new and $p$-old forms can be detected geometrically by the intersection between the $p$-old and the $p$-new parts of the relevant Jacobian as follows.
\begin{thm}[Ribet]\label{thm:geometricintersection}
Let $J:=J_0(Np)$. As before, assume that $Np$ is prime to $\ell$ and $p \nmid N$. Let $\m$ be a maximal ideal of $\T$ of residue characteristic $\ell$, which is $p$-old. Then level raising occurs for $\m$ if and only if
$$
J_{p\hyp\old} \bigcap J_{p\hyp\new}[\m] \neq 0.
$$
\end{thm}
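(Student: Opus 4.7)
My plan is to prove the two directions separately; the $(\Leftarrow)$ direction is essentially formal, while $(\Rightarrow)$ requires the geometric input of Ribet's analysis of $J$ at $p$. For $(\Leftarrow)$: suppose $(J_{p\hyp\old} \cap J_{p\hyp\new})[\m]$ contains a nonzero point $x$. Then $x$ lies in $J_{p\hyp\new}[\m]$; since $\T$ acts on $J_{p\hyp\new}$ through the surjection $\T \twoheadrightarrow \T^{p\hyp\new}$, the image $\overline{\m}$ of $\m$ in $\T^{p\hyp\new}$ must be a proper ideal (otherwise it would annihilate the nonzero $x$ while equalling the unit ideal), hence is contained in some maximal ideal $\fn$. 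The preimage of $\fn$ in $\T$ is a maximal ideal containing $\m$, and therefore equal to $\m$. So $\m$ is itself $p$-new, and level raising holds with witness $\fn = \m$.

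For $(\Rightarrow)$, by the discussion preceding the theorem the hypothesis that level raising occurs for $\m$ in fact forces $\m$ itself to be simultaneously $p$-old and $p$-new, so that both $J_{p\hyp\old}[\m]$ and $J_{p\hyp\new}[\m]$ are nonzero. My main tool would be the short exact sequence of abelian varieties with $\T$-action
$$0 \to J_{p\hyp\old} \cap J_{p\hyp\new} \to J_{p\hyp\old} \oplus J_{p\hyp\new} \xrightarrow{\sigma} J \to 0,$$
in which $\sigma$ is the sum map and surjectivity uses Ribet's theorem that $J = J_{p\hyp\old} + J_{p\hyp\new}$. Taking $\m$-torsion yields the left-exact sequence
$$0 \to (J_{p\hyp\old} \cap J_{p\hyp\new})[\m] \to J_{p\hyp\old}[\m] \oplus J_{p\hyp\new}[\m] \to J[\m],$$
so producing a nonzero element of $(J_{p\hyp\old} \cap J_{p\hyp\new})[\m]$ reduces to exhibiting an overlap of the two nonzero subspaces $J_{p\hyp\old}[\m]$ and $J_{p\hyp\new}[\m]$ inside $J[\m]$.

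To force such an overlap I would analyse the $U_p$-action. On $J_{p\hyp\new}[\m]$, Atkin-Lehner theory forces $U_p$ to act by a scalar $a_p \in \{\pm 1\}$; on $J_{p\hyp\old}[\m]$, realised inside the image of $\gamma_p : J_0(N)^2 \to J$, the operator $U_p$ satisfies $U_p^2 - T_p U_p + p \equiv 0 \pmod{\m}$, and the embedding of $\m$ into $\T$ was set up by selecting a root $\gamma$ with $U_p \equiv \gamma \pmod{\m}$; compatibility of the embedding inside $\T$ forces $\gamma \equiv a_p \pmod{\m}$, so the same scalar governs $U_p$ on both subspaces. I would then combine this with Ribet's identifications of $J_{p\hyp\old} \cap J_{p\hyp\new}$ in terms of the character group $X_p(J_{p\hyp\new})$ and the component group $\Phi_p(J)$ at $p$ (coming from the Deligne-Rapoport special fibre of $J$) to realise the ``congruence module'' between $\T^{p\hyp\old}$ and $\T^{p\hyp\new}$ as a quotient involving this intersection, and conclude that its $\m$-component is nonzero.

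The hardest step is this last one: converting the purely algebraic fact ``$\m$ is both $p$-old and $p$-new'' into the geometric conclusion $(J_{p\hyp\old} \cap J_{p\hyp\new})[\m] \neq 0$. In the irreducible case it follows immediately from multiplicity one for $J[\m]$, but in the Eisenstein (reducible) setting relevant to this paper multiplicity one may fail, so I expect to have to work carefully with the $\T/\m[\GQ]$-structure of $J[\m]$ and the precise form of Ribet's exact sequence alluded to in the opening of this section.
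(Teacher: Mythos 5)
Your $(\Leftarrow)$ argument matches the paper's: $\Omega[\m]\neq 0$ gives $J_{p\hyp\new}[\m]\neq 0$, so the image of $\m$ in $\T^{p\hyp\new}$ is proper and hence maximal. But your $(\Rightarrow)$ direction is where the real content lies, and there your proposal has a genuine gap — one you honestly flag but do not close. You propose to show that the two nonzero subspaces $J_{p\hyp\old}[\m]$ and $J_{p\hyp\new}[\m]$ must overlap inside $J[\m]$, using compatibility of the $U_p$-eigenvalue. That argument establishes only that both subspaces lie in the same $U_p$-eigenspace of $J[\m]$; it does not make them intersect. In the irreducible case a dimension count via multiplicity one finishes the job, but, as you yourself observe, in the Eisenstein setting multiplicity one can fail, and the remaining appeal to ``the $\T/\m[\GQ]$-structure of $J[\m]$ and the precise form of Ribet's exact sequence'' is never carried out. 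There is no obvious way to push this route through: nothing about the Galois-module structure of $J[\m]$ alone forces the old and new pieces to share a vector.

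The paper sidesteps this entirely by never touching $J[\m]$ in the hard direction. It proves the contrapositive: assume $\Omega[\m]=0$. The isogeny $J_{p\hyp\old}\times J_{p\hyp\new}\to J$ with finite kernel $\Omega$ identifies $\End(J)\otimes_{\Z}\Q$ with $\End(J_{p\hyp\old}\times J_{p\hyp\new})\otimes_{\Z}\Q$, so the Hecke-equivariant idempotent $e=(1,0)$ lies in $\T\otimes_{\Z}\Q$; the vanishing $\Omega[\m]=0$ forces $e\in\End(J)\otimes_{\T}\T_{\m}$; and then the decisive input is the Agashe--Ribet--Stein theorem that $\T$ is saturated in $\End(J)$ locally at $\m$ (valid here since $\ell\nmid Np$), which yields $e\in\T_{\m}$. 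If $\m$ were $p$-new, $\T_{\m}$ would inject but not surject into $\T^{p\hyp\old}_{\m}\times\T^{p\hyp\new}_{\m}$ and so could not contain the idempotent $(1,0)$ --- contradiction. The hidden ingredient your sketch was reaching for is exactly this saturation theorem: it replaces any analysis of the Galois module $J[\m]$ by a statement purely about the Hecke ring inside the endomorphism ring, which is why the failure of multiplicity one is irrelevant to the paper's proof.
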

\begin{proof}
Let $\Omega:=J_{p\hyp\old} \bigcap J_{p\hyp\new}$. If $\Omega[\m] \neq 0$, then $J_{p\hyp\new}[\m]$ is not zero, and hence $\m$ is $p$-new. 

Conversely, assume that $\Omega[\m] = 0$. Consider the following exact sequence:
$$
\xymatrix{
0 \ar[r] & \Omega \ar[r] & J_{p\hyp\old} \times J_{p\hyp\new} \ar[r] & J \ar[r] & 0.
}
$$
Let $e = (1, 0) \in \End(J_{p\hyp\old}) \times \End(J_{p\hyp\new})$. 
Note that $e \in \T \otimes_{\Z} \Q$ because $\Omega$ is finite. 
If $e \not\in \End(J)$, then $J \not\simeq J_{p\hyp\old} \times J_{p\hyp\new}$. Therefore $e \in \End(J) \otimes_{\T} \T_{\m}$ because $\Omega[\m]=0$. Thus, we have
$$
e \in (\T {\otimes}_{\Z} \Q) \bigcap (\End(J) \otimes_{\T} {\T_{\m}}).
$$
Note that the intersection $(\T {\otimes}_{\Z} \Q) \bigcap (\End(J)\otimes_{\T} {\T_{\m}})$ is equal to $\T'_\m$, where $\T'$ is the saturation\footnote{The saturation of $\T$ in $\End(J)$ is the intersection of $\T \otimes_\Z \Q$ and $\End(J)$ \cite[p. 29]{ARS12}.} of $\T$ in $\End(J)$. Since $\T$ is saturated in $\End(J)$ locally at $\m$ by \cite[Proposition 5.10]{ARS12}, i.e., 
\[
\T_\m \simeq \T'_\m,
\]
we have $e \in \T_{\m}$.

If $\m$ is also a maximal ideal after projection $\T \rightarrow \T^{p\hyp\new}$, the injection $\T \hookrightarrow
\T^{p\hyp\old} \times \T^{p\hyp\new}$ is not an isomorphism after taking completions at $\m$. Thus, $e = (1, 0) \in \T^{p\hyp\old} \times \T^{p\hyp\new}$ cannot belong to $\T_{\m}$, which is a contradiction. Therefore $\m$ is not $p$-new.
\end{proof}

\begin{rem}
The following assertion is equivalent to \cite[Proposition 5.10]{ARS12}.
\begin{thm*}[Agashe, Ribet, and Stein]
Let $\ell$ be the characteristic of $\T(N)/{\m}$. Then, $\T(N)$ is saturated in $\End(J_0(N))$ locally at $\m$ if
\begin{enumerate}
\item $\ell \nmid N$, or
\item $\ell ~\|~N$ and $T_{\ell} \equiv \pm 1~ \pmod \m$.
\end{enumerate}
\end{thm*}
In our case, the level $Np$ is prime to $\ell$ and hence $\T({Np})$ is saturated in $\End(J_0(Np))$ locally at $\m$.
\end{rem}

In the proof of the theorem above, they used the $q$-expansion principle of modular forms.
In general, when we consider Jacobians of Shimura curves, the saturation property of the Hecke algebra is difficult to prove because we don't have the $q$-expansion principle. However, we can prove the following.

\begin{prop}
Let $\T:=\T^{pr}(q)$, $J:=J_0^{pr}(q)$, and $\m:=(\ell, ~U_p-1, ~U_q-1, ~ U_r+1, ~\cI_0) \subseteq \T$. Assume that $\ell$ does not divide $(p-1)(q-1)$ and $r \equiv -1 \modl$. Then, $\T$ is saturated in $\End(J)$ locally at $\m$.  
\end{prop}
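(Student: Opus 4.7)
The plan is to reduce the saturation to the mod-$\ell$ multiplicity-one statement $\dim_{\T/\m} J[\m]=2$. Once this holds, $\T_\m$ is Gorenstein and the $\m$-adic Tate module $\Ta_\ell(J)_\m$ is free of rank $2$ over $\T_\m$; a standard argument then shows that the cokernel $\End(J)_\m/\T_\m$ is $\Z_\ell$-torsion-free, i.e., $(\T\otimes\Q)\cap\End(J)_\m=\T_\m$ inside $\End(J)\otimes\Q$, which is the desired saturation.

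To prove multiplicity one, I would combine two ingredients. First, at the prime $r$ the Cerednik-Drinfeld uniformization presents $J_{/\F_r}$ as an extension of its component group $\Phi_r(J)$ by a torus with cocharacter group $X_r(J)$, all $\T$- and $\Gal(\overline{\Q}_r/\Q_r)$-equivariantly. Analyzing the connected-\'etale sequence of $J[\ell]$ over $\overline{\F}_r$ after localizing at $\m$, and using the hypothesis $r\equiv -1\modl$ together with $U_r\equiv -1\pmod\m$, one forces the unramified part and the toric part each to contribute exactly one $\T/\m$-dimension to $J[\m]$. Second, the component groups $\Phi_p(J)$ and $\Phi_q(J)$ are governed (via Ribet's formulas in the style of \cite{R90}) by factors of $(p-1)$ and $(q-1)$, respectively; the hypothesis $\ell\nmid (p-1)(q-1)$ then forces $\Phi_p(J)[\m]=\Phi_q(J)[\m]=0$. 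Combining these two ingredients pins down $\dim_{\T/\m}J[\m]=2$.

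The main obstacle is that the $q$-expansion principle underlying \cite[Proposition 5.10]{ARS12} is unavailable for Shimura curves, so saturation cannot be quoted directly from the modular-curve case. Instead, the proof must track the precise signs $U_p\equiv 1$, $U_q\equiv 1$, $U_r\equiv -1\pmod\m$ through the Cerednik-Drinfeld uniformization at $r$ and the component-group calculations at $p$ and $q$; the role of the hypotheses $\ell\nmid (p-1)(q-1)$ and $r\equiv -1\modl$ is precisely to make each of the two contributions to $J[\m]$ exactly one-dimensional, so that multiplicity one holds and saturation follows. An alternative, perhaps more direct route would be to exploit the Jacquet-Langlands correspondence to transport the question to the $pr\hyp\new$ quotient of $J_0(pqr)$, where \cite[Proposition 5.10]{ARS12} applies, but one must then verify that the kernel of the relevant isogeny has order prime to $\ell$ at $\m$, which brings one back to the same Eisenstein bookkeeping.
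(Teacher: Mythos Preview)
Your overall strategy---reduce saturation to multiplicity one $\dim_{\T/\m}J[\m]=2$, hence Gorensteinness and freeness of the Tate module---is sound, and the implication from multiplicity one to saturation is correct. The gap is in your proof of multiplicity one itself.

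At the prime $r$, the purely toric reduction gives an exact sequence
\[
0 \longrightarrow \Hom(Y/\m Y,\mu_\ell) \longrightarrow J[\m] \longrightarrow Y/\m Y \longrightarrow 0,
\]
where $Y=X_r(J)$ is the character group. To conclude $\dim J[\m]=2$ you need $\dim_{\T/\m}(Y/\m Y)=1$, and nothing in your argument establishes this. The congruences $r\equiv -1\modl$ and $U_r\equiv -1\pmod\m$ do not by themselves bound $\dim(Y/\m Y)$; they only make the two pieces Galois-compatible with $\mathbbm{1}\oplus\chi_\ell$. Likewise, the vanishing of $\Phi_p(J)[\m]$ and $\Phi_q(J)[\m]$ (which is correct, though for $\Phi_q$ the reason is that $U_r$ acts as $1$ there while $U_r+1\in\m$, not a $(q-1)$-factor) gives no direct control on $Y/\m Y$ or on $\dim J[\m]$.

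The paper's proof bypasses multiplicity one for $J$ entirely and instead proves $Y_\m$ is free of rank $1$ over $\T_\m$ directly, which already suffices for saturation by the Agashe--Ribet--Stein criterion (a faithful free $\T_\m$-module on which $\End(J)$ acts functorially). The key input is Ribet's exact sequence of character groups
\[
0 \longrightarrow Y \longrightarrow L \longrightarrow X\oplus X \longrightarrow 0,
\]
with $L=X_p(J_0(pqr))$ and $X=X_p(J_0(pq))$. The hypotheses $\ell\nmid(p-1)(q-1)$ force the pair $(p,q)$ for $s=2$ to be non-admissible and the level-$p$ Eisenstein ideal to be non-maximal, so $(X\oplus X)_\fb=0$ and $Y_\m\simeq L_\fa$. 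Then multiplicity one for the \emph{modular} curve $J_0(pqr)$ (Theorem \ref{thm:multipqr}, proved via $q$-expansions) gives $\dim(L/\fa L)=1$, and Nakayama finishes. This is essentially your ``alternative route'' via Jacquet--Langlands, executed at the level of character groups rather than via an isogeny of Jacobians; working with character groups is what makes the transfer exact rather than up to an unknown kernel.
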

\begin{proof}
It suffices to find a free $\T_{\m}$-module of finite rank on which $\End(J)$ operates by functoriality as in the article \cite{ARS12}.

Let $Y$ (resp. $L$, $X$) be the character group of $J_0^{pr}(q)$ at $r$ (resp. $J_0(pqr)$, $J_0(pq)$ at $p$). By Ribet \cite{R90}, there is an exact sequence:
$$
\xymatrix{
0 \ar[r] & Y \ar[r] & L \ar[r] & X \oplus X \ar[r] & 0.
}
$$
Let $\fa$ (resp. $\fb$) be the maximal ideal corresponding to $\m$ in $\T({pqr})$ (resp. $\T({pq})$).
Since $p \not\equiv 1\modl$ and a pair $(p, q)$ is not admissible for $s=2$, $\fb$ is not $p$-new. Therefore $X \oplus X$ does not have support at $\fb$. (Note that the action of $\T({pq})$ on $X$ factors through $\T({pq})^{p\hyp\new}$.) Thus, we have
$Y_{\m} \simeq L_{\fa}$. Since $L/{\fa L}$ is of dimension 1 over $\T(pqr)/{\fa}$ by Theorem \ref{thm:multipqr},
and $L$ is of rank 1 over $\T({pqr})$ in the sense of Mazur \cite[\ts II. 8]{M77} 
(cf. \cite[Lemma 4.13]{Hm}), $L_{\fa}$ is free of rank 1 over $\T({pqr})_{\fa}$ by Nakayama's lemma. 
Therefore $Y_{\m} \simeq L_{\fa}$ is also free of rank 1 over $\T_{\m}$. 
\end{proof}

\begin{rem}
Ribet provided the idea of the above proof.
\end{rem}

Using the proposition above and the proof of Theorem \ref{thm:geometricintersection}, we can deduce the following theorem easily.

\begin{thm} \label{thm:shimintersection}
Let $\T:=\T^{pr}(q)$, $J:=J_0^{pr}(q)$, and $\m:=(\ell, ~U_p-1, ~U_q-1, ~ U_r+1, ~\cI_0) \subseteq \T$. Assume that $\ell$ does not divide $(p-1)(q-1)$ and $r \equiv -1\modl$. Then, level raising occurs for $\m$ if and only if
$$
J_{q\hyp\old} \bigcap J_{q\hyp\new}[\m] \neq 0.
$$
\end{thm}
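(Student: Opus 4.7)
The plan is to imitate the proof of Theorem \ref{thm:geometricintersection} almost verbatim, substituting $J = J_0^{pr}(q)$ for $J_0(Np)$ and invoking the preceding Proposition in place of \cite[Proposition 5.10]{ARS12}. First set $\Omega := J_{q\hyp\old} \cap J_{q\hyp\new}$; this is finite, since the sum of the two degeneracy maps exhibits $J$ as isogenous to $J_{q\hyp\old} \times J_{q\hyp\new}$. For the easy direction, if $\Omega[\m] \neq 0$ then $J_{q\hyp\new}[\m] \neq 0$, so the image of $\m$ in $\T^{q\hyp\new}$ is still maximal, i.e., $\m$ is $q$-new, which is exactly the statement that level raising occurs.

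For the converse I would assume $\Omega[\m] = 0$ and reproduce the Agashe--Ribet--Stein-style argument. From the short exact sequence
$$0 \to \Omega \to J_{q\hyp\old} \times J_{q\hyp\new} \to J \to 0,$$
the projector $e = (1, 0) \in \End(J_{q\hyp\old}) \times \End(J_{q\hyp\new})$ lies in $\T \otimes_{\Z} \Q$ because $\Omega$ is finite, and lies in $\End(J) \otimes_{\T} \T_\m$ because $\Omega[\m] = 0$. Hence $e$ belongs to the completion at $\m$ of the saturation of $\T$ inside $\End(J)$. By the preceding Proposition, whose hypotheses $\ell \nmid (p-1)(q-1)$ and $r \equiv -1 \modl$ match ours exactly, $\T$ is saturated in $\End(J)$ locally at $\m$, so $e \in \T_\m$. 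If $\m$ were also $q$-new, however, the natural injection $\T \hookrightarrow \T^{q\hyp\old} \times \T^{q\hyp\new}$ would fail to become an isomorphism after completion at $\m$, so $e = (1, 0)$ could not lie in $\T_\m$; this contradiction shows $\m$ is not $q$-new.

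The only genuine obstacle to this strategy is the saturation property, which on a modular curve is proved via the $q$-expansion principle but has no direct analogue for Shimura curves. That difficulty has already been circumvented by the preceding Proposition: Ribet's exact sequence relating the character groups $Y$, $L$, and $X \oplus X$, combined with the inadmissibility of the pair $(p, q)$ for $s = 2$ (which kills the $X \oplus X$ contribution at the relevant Eisenstein ideal), yields $Y_\m \simeq L_\fa$, and the multiplicity-one input from Theorem \ref{thm:multipqr} together with Nakayama's lemma produces freeness of rank one. With that Proposition in hand, the present theorem is essentially a formal consequence, with no new geometric input required.
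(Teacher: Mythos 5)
Your proposal is correct and follows exactly the route the paper intends: the paper itself states that Theorem \ref{thm:shimintersection} is deduced by "using the proposition above and the proof of Theorem \ref{thm:geometricintersection}," which is precisely the substitution-plus-saturation argument you carry out. You have also correctly identified the one non-formal ingredient — the saturation of $\T$ in $\End(J)$ locally at $\m$ — and correctly attributed it to the preceding Proposition rather than to a $q$-expansion argument.
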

\vv

\subsection{The intersection of the $p$-old subvariety and the $p$-new subvariety}\label{sec:intersection}
As in the previous subsection, let $p$ be a prime not dividing $N$ and $\Omega$ the intersection of the $p$-old subvariety and the $p$-new subvariety of $J_0(Np)$. By two degeneracy maps, we have the following maps:
$$
\xymatrix{
J_0(N) \times J_0(N) \ar[r]^-{\gamma_p} & J_0(Np) \ar[r] & J_0(N) \times J_0(N).
}
$$
The composition is the matrix
$$
\delta_p : = \left(\begin{array}{cc}
p+1 & T_p \\
T_p & p+1
\end{array}\right).
$$
Let $\Delta$ be the kernel of $\delta_p$, i.e.,
$$
\Delta := J_0(N)^2[\delta_p] = \{ (x, y) \in J_0(N)^2 ~:~ (p+1)x = -T_p\, y ~\text{ and }~ T_p\, x = -(p+1)y \}.
$$
Note that since $\delta_p$ is an isogeny, the cardinality of $\Delta$ is finite.
Let $\Sigma$ be the kernel of $\gamma_p$. Then $\Delta$ contains $\Sigma$ and is endowed with a canonical non-degenerate alternating $\Gm$-valued pairing. Let $\Sigma^{\perp}$ be the orthogonal complement to $\Sigma$ relative to this pairing. Then, $\Sigma^{\perp}$ contains $\Sigma$ and we have the formula:
$$
\Omega = \Sigma^{\perp}/{\Sigma}.
$$
For more details, see \cite{R84}.

We define $\Delta^+$ and $\Delta^-$ as follows: 
\begin{align*}
\Delta^+ := \{(x, -x) \in J_0(N)^2 ~:~ x \in J_0(N)[T_p-p-1] \}, \\
\Delta^- := \{(x,~ x) \in J_0(N)^2 ~:~ x \in J_0(N)[T_p+p+1] \}.
\end{align*}
They are eigenspaces of $\Delta$ for the Atkin-Lehner operator $w_p$.
(Note that $w_p$ acts on $J_0(N)^2$ by swapping its components.) 
If we ignore 2-primary subgroups, $\Delta$ and $\Delta^+ \oplus \Delta^-$ coincide, i.e., $\Delta \sim \Delta^+\oplus \Delta^-$. (See Notation \ref{notation} for $\sim$.) Furthermore we have two filtrations as follows:
\begin{align*}
0 ~\subset~ \Sigma^+ ~\subset ~(\Sigma^{\perp})^+ ~\subset~ \Delta^+, \\
0 ~\subset~ \Sigma^- ~\subset ~(\Sigma^{\perp})^- ~\subset~ \Delta^-. 
\end{align*}
Since $\Delta/{\Sigma^{\perp}}$ is the $\Gm$-dual of $\Sigma$ and $\Sigma$ is an antidiagonal embedding of the
Shimura subgroup of $J_0(N)$ by Ribet \cite{R84}, we have $\Sigma^+ \sim \Sigma$ and $\Sigma^- \sim 0$. Therefore we have
$$
(\Sigma^{\perp})^- \sim \Delta^-.
$$
The function $\gamma_p$ maps $(\Sigma^{\perp})^+$ (resp. $(\Sigma^{\perp})^-$) to $(\Sigma^{\perp})^+/{\Sigma}$ (resp. $\Delta^-$) up to 2-primary subgroups. Since $\Sigma^{\perp}/{\Sigma}$ lies in $J_0(Np)_{p\hyp\new}$, it is annihilated by $U_p+w_p$. Hence, 
$(\Sigma^{\perp})^+/{\Sigma}$ (resp. $\Delta^-$) corresponds to the 
subspace of $\Omega$ annihilated by $U_p-1$ (resp. $U_p+1$) up to 2-primary subgroups.
\vv

\subsection{Ribet's exact sequence}\label{sec:ribetexactsequence}
Let $X_p(J)$ (resp. $\Phi_p(J)$) be the character (resp. component) group of $J$ at $p$.
By the degeneracy maps, there is a Hecke equivariant map between component groups:
$$
\Phi_p(J_0^D(Np)) \times \Phi_p(J_0^D(Np)) \rightarrow \Phi_p(J_0^D(Npq)),
$$
where $q$ is a prime not dividing $NDp$. Let $K$ (resp. $C$) be the kernel (resp. cokernel) of the map above. 
We recall \cite[Theorem 4.3]{R90} and its generalization by Helm (cf. \cite[Theorem 5.4]{Hm}).

\begin{thm}[Ribet]\label{thm:Ribetexactsequence}
There is a Hecke equivariant exact sequence:
$$
\xymatrix{
0 \ar[r] & K \ar[r] & X\oplus X/{\delta_q (X\oplus X)} \ar[r] & \Psi \ar[r] & C \ar[r] & 0,
}
$$
where
$$
X := X_p(J_0^D(Np)),~~\Psi:=\Phi_q(J_0^{Dpq}(N)),~~\text{and}~~~~
\delta_q :=\left( \begin{array}{cc}
q+1 & T_q \\
T_q & q+1 \end{array}
\right).
$$
\end{thm}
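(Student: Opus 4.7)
The plan is to combine two explicit combinatorial models. On one side, the character group $X = X_p(J_0^D(Np))$ is computed via the Deligne--Rapoport model of $X_0^D(Np)$ at $p$ as the degree-zero subgroup of the free abelian group on the supersingular points of $X_0^D(N)$ modulo $p$. On the other, by Cerednik--Drinfeld the component group $\Psi = \Phi_q(J_0^{Dpq}(N))$ is the cokernel of a boundary-type map on the edge/vertex module of the dual graph of the special fiber of $X_0^{Dpq}(N)$ at $q$. Via the Eichler mass formula and the exchange of invariants at $p$ and $q$, both descriptions live over the same underlying object: the Brandt module for an Eichler order of level $N$ in the definite quaternion algebra of discriminant $Dp$.

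First I would pin down these two combinatorial descriptions explicitly, identifying $X \oplus X$ with two copies of the free module on the relevant ideal classes (the two copies reflecting the two irreducible components of the mod-$p$ fiber of $X_0^D(Np)$, equivalently the two degeneracies at $p$). Next, the two degeneracy maps $\alpha_q, \beta_q : X_0^D(Npq) \to X_0^D(Np)$ induce, via Picard functoriality on N\'eron models, a Hecke-equivariant map $X \oplus X \to \Psi$ whose matrix in this basis is precisely
\begin{equation*}
\delta_q = \mat{q+1}{T_q}{T_q}{q+1},
\end{equation*}
with the diagonal $q+1$ entries coming from the degrees of $\alpha_q$ and $\beta_q$ and the off-diagonal $T_q$ entries from the Hecke/Brandt correspondence at $q$. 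This identifies the middle arrow of the sought-for sequence and shows that its image and kernel are controlled by $\delta_q$.

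Finally I would compare with the map $\Phi_p(J_0^D(Np))^2 \to \Phi_p(J_0^D(Npq))$ induced by the same pair of degeneracies: on the combinatorial side this is the ``edge versus vertex'' discrepancy between the two graph models. Its kernel and cokernel are $K$ and $C$ by definition, and splicing the resulting short exact sequences yields the desired four-term sequence
\begin{equation*}
0 \to K \to (X \oplus X)/\delta_q(X \oplus X) \to \Psi \to C \to 0.
\end{equation*}
Hecke equivariance is automatic from the functoriality of all the constructions. The hard part will be the bookkeeping that matches the two graph models (Deligne--Rapoport at $p$ versus Cerednik--Drinfeld at $q$) and verifies that the Hecke operator $T_q$ on the character group is literally the Brandt matrix entering $\delta_q$; the sign conventions for the monodromy pairing and for the orientation of the dual graph must be fixed compatibly, or else one picks up spurious Atkin--Lehner twists and the matrix of the middle map comes out wrong.
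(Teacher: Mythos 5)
The paper does not prove this statement; it quotes it directly from Ribet \cite[Theorem 4.3]{R90}, so your proposal should be judged against Ribet's own argument, and in broad strategy you have the right idea: one compares the Deligne--Rapoport description of $X = X_p(J_0^D(Np))$ at $p$ with the Cerednik--Drinfeld description of the reduction of $X_0^{Dpq}(N)$ at $q$, both living over ideal classes of Eichler orders in the definite quaternion algebra of discriminant $Dp$, and then invokes the degeneracy maps and the monodromy exact sequence.

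However, the central step of your outline does not type-check, and this is a genuine gap rather than a bookkeeping issue. You assert that the degeneracy maps $\alpha_q, \beta_q : X_0^D(Npq) \to X_0^D(Np)$ induce, by Picard functoriality, a map $X \oplus X \to \Psi$ whose matrix is $\delta_q = \mat{q+1}{T_q}{T_q}{q+1}$. But $\delta_q$ is a $2\times 2$ matrix of operators on $X$; it can only describe an endomorphism of $X \oplus X$, not a map into the component group $\Psi = \Phi_q(J_0^{Dpq}(N))$, which lives on a different Jacobian entirely. What the degeneracy maps actually produce, on character groups at $p$, is a pair of maps between $X \oplus X$ and $X' := X_p(J_0^D(Npq))$: the pushforward $\gamma_* : X \oplus X \to X'$ and the pullback $\gamma^* : X' \to X \oplus X$, whose composite $\gamma^* \circ \gamma_* : X \oplus X \to X \oplus X$ is precisely $\delta_q$. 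Your outline never introduces $X'$, and that is where the real content sits: one must identify $\gamma^*$ (or rather its extension to the free modules on supersingular points) with the boundary map of the Cerednik--Drinfeld dual graph of $X_0^{Dpq}(N)_{/\F_q}$, whose vertex set is two copies of the supersingular set for $X_0^D(Np)_{/\F_p}$ and whose edge set is the supersingular set for $X_0^D(Npq)_{/\F_p}$. Only then can the monodromy sequences for $\Phi_p(J_0^D(Np))$, $\Phi_p(J_0^D(Npq))$, and $\Phi_q(J_0^{Dpq}(N))$ be spliced by a snake-lemma argument to produce the four-term sequence. The quotient by $\delta_q(X\oplus X)$ appearing in the middle term records exactly the failure of $\gamma_*$ to be injective after passing to component groups; it is not the matrix of the middle map, and without $X'$ as intermediary there is no way to see where it comes from.

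Your closing caveat about signs and orientations is fair, but it is not the hard part: the hard part is setting up $X'$, matching $\gamma^*$ with the graph boundary, and verifying that all three monodromy sequences are Hecke-compatible so the splice is $\T$-linear. As written, the argument skips from the Brandt-module identification straight to the conclusion, leaving that entire middle layer unproved.
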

If we ignore 2-, 3-primary subgroups of $K$ (resp. $C$), it is isomorphic to $\Phi_p(J_0^D(Np))$ (resp. $\Z/{(q+1)\Z}$) (Remark \ref{rem:C C'}).
For a prime $\ell\geq 5$, we denote by $A_{\ell}$ the $\ell$-primary subgroup $A\otimes_{\Z} {\Z_{\ell}}$. 
We can decompose the exact sequence above into the eigenspaces by the action of the $U_q$ operator as follows:
\begin{cor} \label{cor:Ribetexactsequence} Let $\ell \geq 5$. Then, the following sequences are exact:
\begin{enumerate}
\item
$
\xymatrix{
0 \ar[r] & \Phi_p(J_0^D(Np))_{\ell} \ar[r] & (X/{(T_q-q-1)X})_{\ell} \ar[r] & \Psi^+_{\ell} \ar[r] & 0;
}
$
\item
$
\xymatrix{
0 \ar[r] & (X/{(T_q+q+1)X})_{\ell} \ar[r] & \Psi^-_{\ell} \ar[r] & C_\ell \ar[r] & 0,
}
$
\end{enumerate}
where $\Psi^+$ (resp. $\Psi^-$) denotes the subspace of $\Psi$ annihilated by $U_p-1$ (resp. $U_p+1$). 
\end{cor}
\begin{proof}
Since $K \sim \{(x, -x) : x \in \Phi_p(J_0^D(Np)) \}$ and $\Phi_p(J_0^D(Np))$ is annihilated by $T_q-q-1$, $U_q$ acts on $K_\ell$ by $1$. 
On the other hand, $U_q$ acts by $-1$ on $C$ by Corollary \ref{cor:cokernelcomp}. Therefore the result follows.
\end{proof}

\section{Proof of Ribet's work}\label{sec:proofs}
Even though Ribet's work has been explained in many lectures (e.g., \cite{R10}), 
a complete proof has not been published yet. In this section, we provide it based on his idea.
  
By Mazur's approach, using ideals of the Hecke algebra $\T(N)$, proving admissibility for $s$ of a $t$-tuple $(p_1, \dots, p_t)$ is equivalent to showing that the ideal
$$\m=(\ell, ~U_{p_i}-1, ~U_{p_j}+1, ~\cI_0(N) ~:~ 1 \leq i \leq s \text{ and } ~s+1 \leq j \leq t)$$
is a new maximal ideal. 
To prove that $\m$ is new, we seek a $\T^N(1)$-module (or $\T^{N/p}(p)^{p\hyp\new}$-module) $A$ such that $A[\fn]\neq 0$ (or $A/{\fn A} \neq 0$), where $\fn$ is the  ideal corresponding to $\m$ of $\T^N(1)$ (or $\T^{N/p}(p)^{p\hyp\new}$) by the Jacquet-Langlands correspondence. 
From now on, by abuse of notation we use the same letter for the corresponding ideals of $\T({DN})^{DM\hyp\new}$ and $\T^D(N)^{M\hyp\new}$ by the Jacquet-Langlands correspondence, where $M$ is a divisor of $N$.

\begin{proof} [Proof of Theorem \ref{thm:admsuff}] $~$
\begin{enumerate}
\item
Let $\m:=(\ell, ~U_{p_i}-1, ~\cI_0(N) ~:~ 1 \leq i \leq t) \subseteq \T(N)$. It is enough to show that $\m$ is new maximal. 

Assume that $\ell \mid \varphi(N)$. 
Let $p=p_1$ and $D=N/p$. Let $\Phi_p:=\Phi_p(J_0^D(p))$ be the component group of $J_0^D(p)_{/{\F_{p}}}$. Since $\ell \mid \varphi(N)$, we have $\Phi_p[\m] \neq 0$ by Propositions \ref{prop:heckecomp} and \ref{prop:ordercomp}. 
For $D=1$, by \cite[Theorem 3.10]{R90} and the monodromy exact sequence (\ref{eqn:monodromy}), the action of the Hecke ring on $\Phi_p$ factors through $\T^D(p)^{p\hyp\new}$, 
so $\m$ is $p$-new in $\T^D(p)$. By the Jacquet-Langlands correspondence, $\m$ is new. 
For $D\neq 1$, the same argument holds without further difficulties\footnote{The proof of \cite[Theorem 3.10]{R90} relies on the structure of $J_0(N)_{\F_p}$ when $p$ exactly divides $N$, which is obtained from
the Deligne-Rapoport model of $X_0(N)_{\F_p}$ and the theory of Raynaud. The same method works for $D\neq 1$ since we have the `generalized' Deligne-Rapoport model of $X_0^D(N)_{\F_p}$ by Buzzard and Helm. For more details of the structure of $J_0^D(N)_{\F_p}$, see Appendix \ref{appendix}.}.

\item
Let $q:=p_t$ and assume that $q \equiv -1 \modl$.
Let 
$\fn:=(\ell, ~U_{p_i}-1, ~U_q+1, ~\cI_0(N) ~:~ 1 \leq i \leq s)$ be an Eisenstein maximal ideal of $\T(N)$.
Let $p:=p_1$ and $D := N/{pq}$ (if $s=1$, set $D=1$). 
Since the number of distinct prime divisors of $D$ is even, there are Shimura curves $X_0^D(p), X_0^D(pq)$, and $X_0^{N}(1)$. 
By the Ribet's exact sequence in Corollary \ref{cor:Ribetexactsequence}, we have
$$
\xymatrix{
\Phi_q(J_0^{N}(1))_\ell \ar[r] & C_\ell \ar[r] & 0,
}
$$
where $C$ is the cokernel of the map $\Phi_p(J_0^D(p)) \times \Phi_p(J_0^D(p))\overset{\gamma_q}\ra  \Phi_p(J_0^D(pq))$.
By Corollary \ref{cor:cokernelcomp}, $C[\fn] \neq 0$. Therefore, $\fn$ is a proper maximal ideal of $\T^{Dpq}(1)$. By the Jacquet-Langlands correspondence, $\fn$ is a new maximal ideal of $\T(N)$; in other words, the given $t$-tuple is admissible for $s$.
\end{enumerate}
\end{proof}

\begin{proof}[Proof of Theorem \ref{thm:s2t2}] 
Since $p\equiv 1 \modl$, there is an Eisenstein maximal ideal of $\T:=\T(p)$ containing $\ell$.
Let $I:=(U_p-1, ~\cI_0) \subset \T$ and $\m := (\ell, ~I)$.
By Mazur \cite[Proposition 16.6]{M77}, $I_{\m}:=I \otimes \T_{\m}$ is a principal ideal and it is generated by $\eta_q:=T_q-q-1$ for a good prime $q$. (Here, a prime number $q$ different from $p$ is called \textit{good} if 
$q \not\equiv 1 \modl$ and $q$ is not an $\ell^\th$  power modulo $p$, cf. \cite[p. 124]{M77}.) In other words, the hypothesis on $q$ ensures that $\eta_q$ is not a generator of $I_\m$.

Assume that $\eta_q$ is not a generator of $I_{\m}$. By the Ribet's exact sequence in Corollary \ref{cor:Ribetexactsequence}, we have
$$
\xymatrix{
0 \ar[r] & \Phi_{\ell} \ar[r] & (X/{\eta_q X})_{\ell} \ar[r] & (\Psi^+)_{\ell} \ar[r] & 0,
}
$$
where $\Phi:=\Phi_p(J_0(p))$, $X:=X_p(J_0(p))$, and $\Psi:=\Phi_q(J_0^{pq}(1))$.
By Mazur\footnote{this result is hidden in his paper \cite{M77}. Since $\Phi$ is a cyclic module annihilated by $I$, it is a $\T/I$-module of rank at most 1. Since the orders of $\Phi$ and $\T/I$ are both the numerator of $\frac{p-1}{12}$, $\Phi$ is a free module of rank 1 over $\T/I$.}, $\Phi$ is a free module of rank 1 over $\T/I$ and by Ribet \cite[Theorem 2.3]{R88}, $X_{\m}$ is free of rank 1 over $\T_{\m}$. Since $(X/{I X}) \otimes_{\T} \T_{\m} \simeq X_{\m}/{I_{\m}X_{\m}} \simeq (\T/I)_{\m}$, if $\eta_q$ is not a generator of $I_{\m}$, then 
$$
\#(X/{\eta_q X})_{\m} > \#(X/{I X})_{\m} = \#(\T/I)_{\m} = \# \Phi_{\m}.
$$ 
In other words, 
after taking the completions of the exact sequence above at $\m$, we have $\Psi^+_{\fn} \neq 0$, where $\fn = (\ell, ~U_p-1, ~U_q-1, ~\cI_0^{pq}(1))$ is the ideal of $\T^{pq}(1)$ corresponding to $\m$. Thus, $\fn$ is maximal. By the Jacquet-Langlands correspondence, $\fn$ is new.

Conversely, assume that $\eta_q$ is a generator of $I_{\m}$. Let $\Omega$ be the intersection of the $q$-old subvariety and
the $q$-new subvariety of $J_0(pq)$. Let $\Delta :=J_0(p)^2[\delta_q]$ and let $\Sigma$ be the kernel of $\gamma_q$ as in Section \ref{sec:intersection}. We have a filtration of $\Delta^+$:
$$
0 ~\subseteq~ \Sigma ~\subseteq ~(\Sigma^{\perp})^+ ~\subseteq~ \Delta^+
$$
and $\Delta^+$ is isomorphic to $J_0(p)[\eta_q]$ up to 2-primary subgroups. Since $\eta_q$ is a generator of $I_{\m}$, $(\Delta^+)_{\m}$ is 
isomorphic to $J_0(p)[I]_{\m}$. By Mazur, $J_0(p)[I]$ is free of rank 2 (up to 2-primary part) and $\Sigma$ is free of rank 1 over
$\T/I$. Thus, the $\m$-primary subgroup of $(\Sigma^{\perp})^+/{\Sigma}$ is 0 because $\Delta^+/(\Sigma^{\perp})^+$ is the $\Gm$-dual of $\Sigma$ (and $\T/I$ is finite); in other words, the $\m$-primary subgroup of $\Omega$ is 0 and $\m$ is not in the support of $\Omega$. By Theorem \ref{thm:geometricintersection}, $\m$ is not $q$-new and hence the pair $(p, ~q)$ is not admissible for $s=2$.
\end{proof}

\begin{proof}[Proof of Theorem \ref{thm:ribetraising}] 
Assume that a $(t-1)$-tuple $(p_1, \dots, p_{t-1})$ is admissible for $s$ and $t$ is even.
Let $p=p_1$, $D=\prod_{j=2}^{t-1} p_j$, and $q=p_t$.  
(If $t=2$, set $D=1$.) Since the number of prime factors of $D$ is even, there are Shimura curves $X_0^{D}(p)$, $X_0^{D}(pq)$, and $X_0^{Dpq}(1)$. 

If a $t$-tuple $(p_1, \dots, p_t)$ is admissible for $s$, then $p_t=q \equiv -1 \pmod \ell$ by Theorem \ref{thm:admnec}.

Conversely, assume that $q \equiv -1 \pmod \ell$. 
Since the $(t-1)$-tuple $(p_1, \dots, p_{t-1})$ is admissible for $s$, there is a new Eisenstein maximal ideal 
$\m:=(\ell, ~U_{p_i}-1, ~U_{p_j}+1, ~\cI_0(pD) ~:~ 1 \leq i \leq s \text{ and } ~s+1 \leq  j \leq t-1)$ in $\T({pD})$. Let $X:=X_p(J_0^D(p))$ be the character group of $J_0^{D}(p)_{/{\F_p}}$. Then, by the Ribet's exact sequence in Corollary \ref{cor:Ribetexactsequence}, we have
$$
\xymatrix{
0 \ar[r] & (X/{(T_q+q+1)X})_{\ell} \ar[r] & (\Psi^-)_{\ell},
}
$$
where $\Psi:=\Phi_q(J_0^{Dpq}(1))$. Because $q \equiv -1 \pmod \ell$, $\ell \in \m$, and $T_q-q-1 \in \m$, we have $T_q+q+1 \in \m$. By the Jacquet-Langlands correspondence and the fact that $\T^D(p)^{p\hyp\new}$ acts faithfully on $X$, we have $X/{\m X} \neq 0$. Therefore 
$(X/{(T_q+q+1)X})_{\ell}$ has support at $\m$, so $\Psi^-[\fn] \neq 0$,
where 
$$\fn:=(\ell, ~U_{p_i}-1, ~U_{p_j}+1, ~\cI_0^{Dpq}(1) ~:~ 1 \leq i \leq s \text{ and } ~s+1 \leq j \leq t) \subseteq \T^{Dpq}(1).$$ Therefore $\fn$ is maximal. By the Jacquet-Langlands correspondence, $\fn$ is new. 
\end{proof}

\section{Admissible tuples for $s=1$ or even $t$} \label{sec:admtuples}
In this section we present new results on admissible tuples for $s=1$ or even $t$. As always, we assume that $\ell \geq 5$.

\begin{thm} \label{thm:adms1}
Assume that $t\geq 2$.
A $t$-tuple $(p_1, \dots, p_t)$ is admissible for $s=1$ if and only if $p_i \equiv -1 \modl$ for $2 \leq i \leq t$.
\end{thm}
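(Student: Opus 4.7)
The plan is to induct on $t \geq 2$. The necessary direction is immediate from Theorem \ref{thm:admnec}(3) applied with $s = 1$. For sufficiency, the base case $t = 2$ is exactly Theorem \ref{thm:admsuff}(2), since $s = 1$ is odd and $p_2 \equiv -1 \modl$ by hypothesis. For the inductive step with $t \geq 3$, I would split on the parity of $t$. If $t$ is even, then $t - 1 \geq 3$ is odd, so by the inductive hypothesis the $(t-1)$-tuple $(p_1, \ldots, p_{t-1})$ for $s = 1$ is admissible, and Ribet's level raising (Theorem \ref{thm:ribetraising}) directly upgrades this to admissibility of $(p_1, \ldots, p_t)$ using the hypothesis $p_t \equiv -1 \modl$.

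The remaining case is $t \geq 3$ odd, which I would treat by mimicking the proof of Theorem \ref{thm:admsuff}(1). Set $D := \prod_{j=2}^{t} p_j$; since $t - 1$ is even, $D$ is a product of an even number of distinct primes, so the Shimura curve $X_0^D(p_1)$ and its Jacobian $J := J_0^D(p_1)$ are well-defined. Let $\Phi := \Phi_{p_1}(J)$ be the component group at $p_1$ and consider the Eisenstein maximal ideal
$$
\m := (\ell,\ U_{p_1} - 1,\ U_{p_j} + 1 \text{ for } 2 \leq j \leq t,\ \cI_0) \subset \T^D(p_1).
$$
Using the Shimura-curve analogues of Propositions \ref{prop:heckecomp} and \ref{prop:ordercomp} in the Appendix, together with the congruences $p_j \equiv -1 \modl$ for $j \geq 2$ (which supply the needed $\ell$-divisibility of $|\Phi|$ through the factor $\prod_{j=2}^{t}(p_j+1)$), I would conclude that $\Phi[\m] \neq 0$. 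Since the Hecke action on $\Phi$ factors through $\T^D(p_1)^{p_1\hyp\new}$ by \cite[Theorem 3.10]{R90} and the monodromy exact sequence, the ideal $\m$ is $p_1$-new in $\T^D(p_1)$, and the Jacquet-Langlands correspondence lifts it to a new Eisenstein maximal ideal of $\T(N)$, producing the required newform.

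The main obstacle will be verifying $\Phi[\m] \neq 0$: unlike the situation of Theorem \ref{thm:admsuff}(1), where every $U_{p_i}$ acts as $+1$, in our setting $U_{p_j}$ must act as $-1$ for each $p_j \mid D$, which corresponds to a specific sign pattern for the Atkin-Lehner involutions $w_{p_j}$ on the cycles of the dual graph of the Cerednik-Drinfeld special fiber of $X_0^D(p_1)$ at $p_1$. The delicate point is to isolate the joint $(-1,\ldots,-1)$-eigencomponent of $\Phi$ under these involutions and show that its $\ell$-part is nonzero exactly under the hypothesized congruences $p_j \equiv -1 \modl$. Once the Appendix computations supply this, the rest of the argument is a direct transcription of the proof of Theorem \ref{thm:admsuff}(1).
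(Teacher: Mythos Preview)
Your treatment of the necessary direction, the base case $t=2$, and the inductive step for even $t$ is correct and matches the paper. The problem is the odd $t$ step.

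Your plan is to show $\Phi[\m]\neq 0$ for $\Phi=\Phi_{p_1}(J_0^D(p_1))$ with $D=\prod_{j=2}^t p_j$ and $\m$ containing $U_{p_j}+1$ for $j\geq 2$. But Proposition~\ref{prop:heckecomp} computes the Hecke action on $\Phi$ completely: for every prime $r\mid Dp_1$ (in particular for each $p_j\mid D$) the operator $U_r-1$ annihilates $\Phi$. Thus $U_{p_j}$ acts as $+1$ on all of $\Phi$, so $U_{p_j}+1$ acts as the unit $2$, and $\Phi[\m]=0$ identically. There is no $(-1,\dots,-1)$-eigencomponent to isolate; the entire $\ell$-primary part of $\Phi$ sits in the $(+1,\dots,+1)$-eigenspace. (Incidentally, the special fiber of $X_0^D(p_1)$ at $p_1$ is of Deligne--Rapoport/Buzzard type, not Cerednik--Drinfeld, since $p_1\nmid D$.) So the component-group argument that works for Theorem~\ref{thm:admsuff}(1) cannot be transplanted here; the sign pattern you need is simply not present on $\Phi$.

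The paper handles odd $t$ differently. It does not look at the component group at $p_1$ but instead applies Ribet's exact sequence (\S\ref{sec:ribetexactsequence}) with $X=X_{p_2}(J_0^C(p_1p_2))$ and $\Psi=\Phi_{p_t}(J_0^{Dp_t}(p_1))$, using the $(-1)$-eigenspace for $U_{p_t}$. The inductive hypothesis gives $X/\m X\neq 0$, and since $p_t\equiv -1\modl$ forces $T_{p_t}+p_t+1\in\m$, the injection $(X/(T_{p_t}+p_t+1)X)_\ell\hookrightarrow\Psi^-_\ell$ shows $\fn$ is maximal in $\T^{Dp_t}(p_1)$. The extra wrinkle is that this only gives maximality, not $p_1$-newness; the paper closes this by observing that if $\fn$ were $p_1$-old then a $(t-1)$-tuple for $s=0$ would be admissible, contradicting Theorem~\ref{thm:admnec}(1). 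This last step has no analogue in your approach.
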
 

\begin{proof} 
Assume that a $t$-tuple $(p_1, \dots, p_t)$ is admissible for $s=1$ . Then, by Theorem \ref{thm:admnec}, we have 
$p_i \equiv -1 \modl$ for $2 \leq i \leq t$. 

Conversely, assume that $p_i \equiv -1 \modl$ for $2 \leq i \leq t$. 
\begin{enumerate}
\item Case 1 : 
Assume that $t$ is odd and a $(t-1)$-tuple $(p_1, \dots, p_{t-1})$ for $s=1$ is admissible.  
Let $p:=p_1$, $q:=p_t$, $C=\prod_{i=3}^{t-1} p_i$, and $D=\prod_{i=2}^{t-1} p_i$. (If $t=3$, set $C=1$.) 
Let $$\m:=(\ell, ~U_p-1, ~U_{p_i}+1, ~\cI_0(pD) ~:~  2 \leq i \leq {t-1})$$ 
be a new Eisenstein 
maximal ideal of $\T({pD})$. Let 
$$\fn:=(\ell, ~U_p-1, ~U_{p_i}+1, ~\cI_0(pDq) ~:~ 2 \leq i \leq t)$$ 
be an Eisenstein maximal ideal of $\T({pDq})$. It suffices to show that $\fn$ is new. 

Since $t$ is odd, there are Shimura curves $X_0^C(p p_2)$ and $X_0^{Dq}(p)$. By the Ribet's exact sequence in Corollary \ref{cor:Ribetexactsequence},
we have
$$
\xymatrix{
0 \ar[r] & (X/{(T_q+q+1)X})_{\ell} \ar[r] & \Psi^-_{\ell} \ar[r] & (\Z/{(q+1)\Z})_{\ell} \ar[r] & 0, 
}
$$
where $X:=X_{p_2}(J_0^C(p p_2))$ and $\Psi:=\Phi_q(J_0^{Dq}(p))$. Since $\T^C({p p_2})^{p_2 \hyp \new}$ acts faithfully on $X$ and $\m$ is new, we have $X/{\m X} \neq 0$. Moreover, $T_q+q+1 \in \m$ because $q \equiv -1 \modl$, $\ell \in \m$, and $T_q-q-1 \in \m$. Thus, $\m$ is in the support of $(X/{(T_q+q+1)X})_{\ell}$, so $\fn$ is also in the support of $\Psi^-_{\ell}$.
Therefore $\fn$ is a proper maximal ideal of $\T^{Dq}(p)$.  
If $\fn$ is $p$-old, then
there is a new maximal ideal $(\ell, ~U_{p_i}+1, ~\cI_0(Dq) ~:~ 2 \leq i \leq t)$ of $\T({Dq})$ 
by the Jacquet-Langlands correspondence; in other words, the $(t-1)$-tuple $(p_2, \dots, p_t)$ is admissible for $s=0$, which contradicts Theorem \ref{thm:admnec}. Thus, $\fn$ is $p$-new, so by the Jacquet-Langlands
correspondence $\fn$ is a new maximal ideal of $\T({pDq})$.

\item Case 2 : 
Assume that $t$ is even and a $(t-1)$-tuple $(p_1, \dots, p_{t-1})$ is admissible for $s=1$.
Then, since $p_t \equiv -1 \modl$, the $t$-tuple $(p_1, \dots, p_t)$ for is admissible for $s=1$ by Theorem \ref{thm:ribetraising}.
\end{enumerate}

When $t=2$, by Theorem \ref{thm:admsuff}, the pair $(p_1, p_2)$ is admissible for $s=1$. Thus, by induction on $t$, the $t$-tuple $(p_1, \dots, p_t)$ is admissible for $s=1$ for all $t \geq 2$. 
\end{proof}

Using the same method as above, we can prove the following level raising theorem, which is almost complement of the case in Theorem \ref{thm:ribetraising} when $t$ is even. (This excludes the case where $s=t$ only.)
  
\begin{thm}\label{thm:yooraising}
Assume that $t$ is even and $s<t$. And assume that a $(t-1)$-tuple $(p_2, \dots, p_t)$ is admissible for $(s-1)$. Then, a $t$-tuple $(p_1, \dots, p_t)$ is admissible for $s$. 
\end{thm}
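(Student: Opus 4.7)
The plan is to adapt the method of Theorem \ref{thm:ribetraising}, but to extract information from the $+1$-eigenspace of $U_q$ in Ribet's exact sequence rather than the $-1$-eigenspace, reflecting the fact that we are adding $q := p_1$ as a ``$+$'' prime. The new technical ingredient is that the $+1$-eigenspace sits as a quotient \emph{above} $\Phi_p$ in the sequence, so in addition to the nonvanishing of $X/(T_q-q-1)X$ at $\m$, we must verify that $\Phi_p$ vanishes at $\m$.

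Set $q := p_1$ and $p := p_t$. Since the $(t-1)$-tuple $(p_2, \ldots, p_t)$ for $(s-1)$ is admissible, Theorem \ref{thm:admnec} gives $s \geq 2$ and $p_j \equiv -1 \pmod \ell$ for $s+1 \leq j \leq t$; in particular $p \equiv -1 \pmod \ell$. Let $D := p_2 p_3 \cdots p_{t-1}$; since $t$ is even, $D$ is a product of $t-2$ primes (even), so the Shimura curves $X_0^D(p)$ and $X_0^{Dpq}(1)$ exist. By hypothesis there is a new Eisenstein maximal ideal
$$\m \ = \ (\ell,\ U_{p_i}-1,\ U_{p_j}+1,\ \cI_0 \ :\ 2 \leq i \leq s,\ s+1 \leq j \leq t)$$
in $\T(Dp) = \T(p_2 \cdots p_t)$, which via Jacquet-Langlands I identify with a maximal ideal of $\T^D(p)^{p\hyp\new}$. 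Let $\fn := (\ell,\ U_q-1,\ \m) \subseteq \T^{Dpq}$. It suffices to show that $\fn$ is a proper maximal ideal of $\T^{Dpq}$, for then Jacquet-Langlands produces a new Eisenstein maximal ideal of $\T(p_1 \cdots p_t)$ with the prescribed $U_{p_i}$-eigenvalues.

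I would apply Ribet's exact sequence (Theorem \ref{thm:Ribetexactsequence}) in its decomposed $+1$-eigenspace form for $U_q$:
$$0 \to \Phi_p(J_0^D(p))_\ell \to (X/(T_q-q-1)X)_\ell \to \Phi_q(J_0^{Dpq}(1))^+_\ell \to 0,$$
where $X := X_p(J_0^D(p))$. The proof then reduces to two claims. \emph{First}, $(X/(T_q-q-1)X)_\m \neq 0$: since $\m$ is $p$-new and $\T^D(p)^{p\hyp\new}$ acts faithfully on $X$ by Jacquet-Langlands, $X_\m \neq 0$; since $T_q - q - 1 \in \m$, Nakayama's lemma forces the nonvanishing. \emph{Second}, $\Phi_p(J_0^D(p))_\m = 0$: by the Cerednik-Drinfeld/Deligne-Rapoport description of the special fiber of $J_0^D(p)$ at $p$ (Appendix \ref{appendix}), $U_p$ acts as $+1$ on $\Phi_p(J_0^D(p))$. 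Since $\m \ni U_p+1$, the element $U_p-1$ reduces to $-2$ in $\T/\m$, which is a unit as $\ell \geq 5$; combined with $(U_p-1)\Phi_p = 0$, this forces $\Phi_p/\m\Phi_p = 0$, and Nakayama finishes.

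Combining the two claims in the exact sequence yields $\Phi_q(J_0^{Dpq}(1))^+_\fn \simeq (X/(T_q-q-1)X)_\m \neq 0$, so $\fn$ is a proper maximal ideal of $\T^{Dpq}$, as desired. The main obstacle is the second claim: pinning down the Hecke action of $U_p$ as $+1$ on $\Phi_p(J_0^D(p))$ requires the geometric analysis of the special fiber and a correct invocation of the Shimura-curve analog of the component-group formulas used in the proof of Theorem \ref{thm:admsuff}(1); one must also stay consistently in the $\ell$-primary part so that the 2- and 3-primary discrepancies in Ribet's exact sequence do not interfere.
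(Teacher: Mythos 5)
Your proof is correct and follows essentially the same route as the paper: set $q := p_1$, $p := p_t$, $D := p_2 \cdots p_{t-1}$, invoke the $U_q$-eigenvalue-$+1$ piece of Ribet's exact sequence, get $X/(T_q-q-1)X$ nonvanishing at $\m$ from faithfulness of $\T^D(p)^{p\hyp\new}$ on $X$, and kill $\Phi_p(J_0^D(p))$ at $\m$ to push the nonvanishing into $\Psi^+$. The paper disposes of $\Phi_p$ with a terse citation of Proposition \ref{prop:heckecomp}; you have simply unpacked what that citation means (since $U_p-1$ annihilates $\Phi_p$ while $U_p+1 \in \m$, the element $U_p-1$ is a unit in $\T_\m$ because $\ell \geq 5$, so $\Phi_p$ vanishes at $\m$), which is the right argument and is where the hypothesis $s<t$ (forcing $U_p+1 \in \m$) is used.
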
 
In contrast to Theorem \ref{thm:ribetraising}, there is no condition on $p_1$ for raising the level. 
\begin{proof}
Let $q=p_1$ and $\m:=(\ell, ~U_{p_i}-1, ~U_{p_j}+1, ~\cI_0(pD) ~:~ 2 \leq i \leq s \text{ and }~ s+1 \leq j \leq t)$ be an Eisenstein maximal ideal of $\T({pD})$, where $D:=\prod_{k=2}^{t-1} p_k$ and $p:=p_t$. By our assumption, $\m$ is new. 
Let $\fn:=(\ell, ~U_{p_i}-1, ~U_{p_j}+1, ~\cI_0(Dpq) ~:~ 1 \leq i \leq s \text{ and } ~ s+1 \leq j \leq t)$ be an Eisenstein maximal ideal of $\T({Dpq})$. It suffices to show that $\fn$ is new. 

Since $t$ is even, there are Shimura curves $X_0^D(p)$ and $X_0^{Dpq}(1)$. 
By the Ribet's exact sequence in Corollary \ref{cor:Ribetexactsequence}, we have
$$
\xymatrix{
0 \ar[r] & \Phi_{\ell} \ar[r] & (X/{(T_q-q-1)X})_{\ell} \ar[r] & \Psi^+_{\ell} \ar[r] & 0,
}
$$
where $\Phi:=\Phi_p(J_0^D(p))$, $X:=X_p(J_0^D(p))$, and $\Psi:=\Phi_q(J_0^{Dpq}(1))$.
Since $\m$ is new and $\T^D(p)^{p\hyp \new}$ acts faithfully on $X$, we have $X/{\m X} \neq 0$.
Because $T_q-q-1 \in \m$, $\m$ lies in the support of $(X/(T_q-q-1)X)_{\ell}$. Since   
$\m$ is not in the support of $\Phi_{\ell}$ by Proposition \ref{prop:heckecomp}, 
$\fn$ is in the support of $\Psi^+_{\ell}$. Thus, $\fn$ is a maximal ideal of $\T^{Dpq}(1)$. By the Jacquet-Langlands
correspondence, $\fn$ is new.
\end{proof}

\begin{cor}\label{cor:adms2}
Assume that $t\geq 4$ is even. Then, a $t$-tuple $(p_1, \dots, p_t)$ is admissible for $s=2$ if and only if
$p_i \equiv -1 \modl$ for $3 \leq i \leq t$.
\end{cor}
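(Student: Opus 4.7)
The plan is to combine the preceding two theorems. Necessity is immediate: if the $t$-tuple is admissible with $s=2$, then Theorem \ref{thm:admnec}(3) forces $p_j \equiv -1 \pmod{\ell}$ for all $3 \leq j \leq t$, which is exactly the stated condition.

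For sufficiency, assume $p_i \equiv -1 \pmod{\ell}$ for $3 \leq i \leq t$. Since $t \geq 4$ and $t$ is even, we have $t-1 \geq 3 \geq 2$, so Theorem \ref{thm:adms1} applies to the $(t-1)$-tuple $(p_2, p_3, \ldots, p_t)$ for $s=1$: the congruence hypothesis on $p_3, \ldots, p_t$ is precisely what that theorem requires, hence $(p_2, \ldots, p_t)$ for $s=1$ is admissible. Now the level raising result Theorem \ref{thm:yooraising} applies with the given $t$ (even) and $s=2 < t$: starting from the admissible $(t-1)$-tuple $(p_2, \ldots, p_t)$ for $s-1 = 1$, it produces an admissible $t$-tuple $(p_1, p_2, \ldots, p_t)$ for $s=2$, which is what we want.

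The key structural point is that Theorem \ref{thm:yooraising} imposes no congruence condition on the newly added prime $p_1$ (which is the one carrying the extra plus sign), in contrast to Theorem \ref{thm:ribetraising} where the added prime must be $\equiv -1 \pmod \ell$. This is precisely what makes the corollary clean: we use the plus-side level raising exactly once, and inherit the minus-side congruences from the base case of Theorem \ref{thm:adms1}. There is no serious obstacle here — the corollary is really a direct consequence of Theorems \ref{thm:adms1} and \ref{thm:yooraising} — so the proof should fit in a few lines.
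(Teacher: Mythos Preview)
Your proof is correct and follows essentially the same approach as the paper: necessity from Theorem \ref{thm:admnec}, and sufficiency by first applying Theorem \ref{thm:adms1} to the $(t-1)$-tuple $(p_2,\dots,p_t)$ for $s=1$ and then invoking Theorem \ref{thm:yooraising} to add $p_1$ and pass to $s=2$. The paper's proof is just a terser version of yours.
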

\begin{proof}
If a $t$-tuple $(p_1, \dots, p_t)$ is admissible for $s=2$, then $p_i \equiv -1 \modl$ for $3 \leq i \leq t$ by Theorem \ref{thm:admnec}.

Conversely, assume that $p_i \equiv -1 \modl$ for $3 \leq i \leq t$. 
By Theorem \ref{thm:adms1}, the $(t-1)$-tuple $(p_2, \dots, p_t)$ is admissible for $s=1$. Thus,
the $t$-tuple $(p_1, \dots, p_t)$ is also admissible for $s=2$ by Theorem \ref{thm:yooraising}. 
\end{proof}
\vv

\section{Admissible triples and quadruples}\label{sec:admtriples}
In this section, we classify admissible triples and quadruples.  As before, we assume that $\ell \geq 5$. In the last subsection, we provide the proof of Theorem \ref{thm:sufficientmain} for the convenience of readers.
\subsection{Admissible triples}
By Theorem \ref{thm:admnec} and \ref{thm:admsuff}, a triple $(p, q, r)$ is admissible  for $s=3$ if and only if $\ell \mid \varphi(pqr)$; and by Theorem \ref{thm:adms1}, a triple $(p, q, r)$ is admissible for $s=1$ if and only if $q\equiv r\equiv -1 \modl$.

However, we cannot directly use the above theorems to get admissible triples for $s=2$.
By Theorem \ref{thm:admnec}, if a triple $(p, q, r)$ is admissible for $s=2$, then $r \equiv -1 \modl$. 
Assume that $r \equiv -1 \modl$. Let $I:=(U_p-1, U_r+1, ~\cI_0(pr))$ be an Eisenstein ideal 
of $\T:=\T({pr})$ and $\m := (\ell, ~I)$. Since $r \equiv -1 \modl$, $\m$ is new by Theorem \ref{thm:admsuff}. We discuss sufficient conditions for a triple $(p, q, r)$ to be admissible for $s=2$ by level raising methods.

\begin{thm} \label{thm:s2t3suff}
Assume that $p \not\equiv 1 \modl$ and $r\equiv -1 \modl$; and if $q \equiv 1 \modl$, we assume further that $p$ is not an $\ell^\th$  power modulo $q$.
Then, a triple $(p, q, r)$ is admissible for $s=2$ if $\eta_q:=T_q-q-1$ is not a generator of $I_{\m}$.

Assume further that $q \not\equiv 1 \modl$ and $r \not\equiv -1 \pmod {\ell^2}$. Then, a triple $(p, q, r)$ is not admissible for $s=2$  if $\eta_q$ is a generator of $I_{\m}$. 
\end{thm}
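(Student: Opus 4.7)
The plan is to imitate the proof of Theorem \ref{thm:s2t2} at level $pr$ instead of prime level $p$, using the fact that $r \equiv -1 \modl$ makes the Eisenstein maximal ideal $\m = (\ell, I) \subseteq \T := \T(pr)$ already $r$-new. Applying the Ribet exact sequence from Section \ref{sec:ribetexactsequence} with trivial discriminant, Eichler level $r$, raising by the auxiliary prime $q$, and passing to the $+1$-eigenspace for $U_q$, yields
\begin{equation*}
0 \longrightarrow \Phi_\ell \longrightarrow (X/\eta_q X)_\ell \longrightarrow \Psi^+_\ell \longrightarrow 0,
\end{equation*}
where $\Phi := \Phi_p(J_0(pr))$, $X := X_p(J_0(pr))$, and $\Psi := \Phi_q(J_0^{pq}(r))$; one then localizes this sequence at $\m$.

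The technical core is a pair of structural statements at $\m$: the character group $X_\m$ is free of rank one over $\T_\m$, and the component group $\Phi_\m$ is isomorphic to $(\T/I)_\m$. The first is the natural analog at level $pr$ of Mazur's principality theorem for $J_0(p)$ and can be proved by an Ihara-type argument combined with the $r$-newness of $\m$. The second requires isolating the Eisenstein summand of $\Phi_p(J_0(pr))$ from the Deligne-Rapoport model (see Appendix \ref{appendix}) and the action of the Atkin-Lehner involution at $r$; the hypothesis $r \not\equiv -1 \pmod{\ell^2}$ in the second part of the theorem is exactly what ensures this summand has order $|(\T/I)_\m|$ rather than a proper multiple. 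Granted these inputs, the localized sequence yields
\begin{equation*}
|\Psi^+_\m| = \frac{|(\T/\eta_q \T)_\m|}{|(\T/I)_\m|},
\end{equation*}
which is nontrivial if and only if $\eta_q$ does not generate $I_\m$.

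For sufficiency, if $\eta_q$ is not a generator of $I_\m$, the maximal ideal $\fn := (\ell, U_p-1, U_q-1, U_r+1, \cI_0) \subseteq \T^{pq}(r)$ lies in the support of $\Psi^+_\m$; by Jacquet-Langlands it produces a $p$- and $q$-new maximal ideal of $\T(pqr)$ with the prescribed Hecke eigenvalues. If it were $r$-old, it would descend to a maximal ideal of $\T(pq)^{pq\text{-new}}$, making the pair $(p,q)$ for $s=2$ admissible; but by Theorem \ref{thm:s2t2} this is exactly excluded by the hypothesis $p \not\equiv 1 \modl$ and (when $q \equiv 1 \modl$) $p$ not an $\ell^{\th}$ power modulo $q$. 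Hence $\fn$ is genuinely new, and the triple is admissible. For the necessity statement, the extra hypothesis $q \not\equiv 1 \modl$ gives $\ell \nmid (p-1)(q-1)$, so Theorem \ref{thm:shimintersection} applies to $J_0^{pr}(q)$; together with $r \not\equiv -1 \pmod{\ell^2}$, the order computation forces $\Psi^+_\m = 0$ when $\eta_q$ generates $I_\m$, and the theorem translates this into the vanishing of the $\m$-part of the $q$-old/$q$-new intersection, excluding admissibility.

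The main obstacle is the sharp determination of $\Phi_\m$: at composite level the component group mixes $r$-old and $r$-new contributions, and pinning down the exact order of the Eisenstein summand requires both the $r$-newness of $\m$ and careful analysis using the Atkin-Lehner involution at $r$, with the condition $r \not\equiv -1 \pmod{\ell^2}$ playing the key role of preventing extra $\ell^2$-torsion from inflating $\Phi_\m$ beyond $(\T/I)_\m$.
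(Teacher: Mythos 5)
The sufficiency half of your argument follows the paper's proof closely: you invoke the Ribet exact sequence with $\Phi := \Phi_p(J_0(pr))$, $X := X_p(J_0(pr))$, $\Psi := \Phi_q(J_0^{pq}(r))$, argue $(X/\eta_q X)_\m$ is strictly larger than $\Phi_\m$ when $\eta_q$ is not a generator of $I_\m$, conclude $\Psi^+_\fn \neq 0$, and rule out $r$-oldness by invoking the non-admissibility of the pair $(p,q)$ for $s=2$ via Theorem~\ref{thm:s2t2}. This is essentially what the paper does, modulo that you should cite the computation $\#\Phi_\m = \ell^n$ where $\ell^n\,\|\,(r+1)$ from Appendix~\ref{appendix} together with the result $(\T/I)_\m \simeq \Z/\ell^n\Z$ and the freeness of $X_\m$ from \cite{Yoo14}, rather than gesture at an ``Ihara-type argument.'' Note that the equality $\#\Phi_\m = \#(\T/I)_\m$ holds \emph{without} assuming $r \not\equiv -1 \pmod{\ell^2}$, so your remark that this hypothesis is what pins down $\Phi_\m$ is a misreading; that hypothesis is not used in the sufficiency direction at all.

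The necessity direction, however, contains a genuine gap. You argue that when $\eta_q$ generates $I_\m$, ``the order computation forces $\Psi^+_\m = 0$,'' and then that this ``translates into the vanishing of the $\m$-part of the $q$-old/$q$-new intersection'' via Theorem~\ref{thm:shimintersection}. But Theorem~\ref{thm:shimintersection} concerns the intersection $J_0^{pr}(q)_{q\hyp\old} \cap J_0^{pr}(q)_{q\hyp\new}$ inside the Shimura Jacobian $J_0^{pr}(q)$, while $\Psi = \Phi_q(J_0^{pq}(r))$ is the component group of a \emph{different} Shimura Jacobian. There is no general principle in the paper (or that I know of) equating the vanishing of $\Psi^+_\m$ with the vanishing of the $\m$-part of that intersection; the component group can vanish at a maximal ideal that is nonetheless new, so the implication you need simply does not follow. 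The paper's actual argument in this direction bypasses the component group entirely: it writes the intersection as $(K^\perp/K)^+$ where $K = \ker\bigl(J_0^{pr}(1)^2 \to J_0^{pr}(q)\bigr)$, shows via the Skorobogatov subgroup (Proposition~\ref{prop:skorobogatov}) that $\#K^+_\m \geq \ell$, uses $r \not\equiv -1 \pmod{\ell^2}$ to force $I_\m = \m_\m$ so that $J_0^{pr}(1)[\eta_q]_\m = J_0^{pr}(1)[\m]$, invokes the multiplicity-one statement $\dim_{\F_\ell} J_0^{pr}(1)[\m] = 2$ (Theorem~\ref{thm:appmulti}), and uses $\Gm$-duality to squeeze $(K^\perp/K)^+_\m$ to zero. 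That entire chain --- Skorobogatov subgroup, $I_\m = \m_\m$, multiplicity one, duality --- is absent from your proposal, and is what makes the negative direction go through. The condition $r \not\equiv -1 \pmod{\ell^2}$ is needed precisely to get $I_\m = \m_\m$, not to control $\Phi_\m$ as you suggest.
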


\begin{proof}
By the Ribet's exact sequence in Corollary \ref{cor:Ribetexactsequence}, we have
$$
\xymatrix{
0 \ar[r] & \Phi_{\ell} \ar[r] & (X/{\eta_q X})_{\ell} \ar[r] & \Psi^+_{\ell} \ar[r] & 0,
}
$$
where $\Phi:=\Phi_p(J_0(pr))$, $X:=X_p(J_0(pr))$, and $\Psi:=\Phi_q(J_0^{pq}(r))$.
By Propositions \ref{prop:heckecomp} and \ref{prop:ordercomp}, we have $\# \Phi_{\m}=\ell^n$, where $\ell^n$ is the power of $\ell$
exactly dividing $r+1$. By the result in \cite{Yoo14}, $X_{\m}$ is free of rank 1 over $\T_{\m}$ and
$(\T/I)_{\m} \simeq (\T_\m/{I_\m})\simeq \Z/{{\ell^n}\Z}$. 
Assume that $\eta_q$ is not a generator of $I_{\m}$. Then, we have
$$
\# (X/{\eta_q X})_{\m} = \# (\T_\m/{\eta_q \T_\m}) > \# (\T_\m/I_\m) = \ell^n. 
$$
Thus, we have $\Psi^+_{\fn} \neq 0$, where $\fn:=(\ell, U_p-1, U_q-1, U_r+1, ~\cI_0^{pq}(r)) \subseteq \T^{pq}(r)$; in other words, $\fn$ is maximal. If it is $r$-old, then the pair $(p, q)$ is admissible for $s=2$, which contradicts our assumption. Therefore, $\fn$ is $r$-new and by the Jacquet-Langlands correspondence, $\fn$ is new.

Assume further that $q \not\equiv 1 \modl$, $r \not\equiv -1 ~(\mathrm{mod}~ \ell^2)$, and $\eta_q$ is a generator of $I_{\m}$, i.e.,
$I_{\m}=(\eta_q)=\m_{\m}$.
Let $K$ be the kernel of the map $J_0^{pr}(1) \times J_0^{pr}(1) \rightarrow J_0^{pr}(q)$ by two degeneracy maps.
Then, as in Section \ref{sec:intersection}, we have
$$
0 \subseteq K^+_{\m} \subseteq (K^{\perp})^+_{\m} \subseteq V_{\m},
$$
where $V \simeq J_0^{pr}(1)[\eta_q]$.
Since by Proposition \ref{prop:skorobogatov}, $K^+$ contains the Skorobogatov subgroup of $J_0^{pr}(1)$ at $r$, which is of order $r+1$ \upto, we have
$\# K^+_{\m} \geq \ell$. Since $(\eta_q)=I_{\m}=\m_{\m}$, we have
$V_\m \simeq J_0^{pr}(1)[\eta_q]_{\m}=J_0^{pr}(1)[\m]$ and it is of dimension 2 over $\T^{pr}(1)/{\m}\simeq \F_{\ell}$
by Theorem \ref{thm:appmulti}. Because $V/{(K^{\perp})^+}$ is the $\Gm$-dual of $K^+$, we have $(K^{\perp}/K)^+_{\m}=0$. 
Since $K^{\perp}/K$ is isomorphic to the intersection of the $q$-old subvariety and the $q$-new subvariety of $J_0^{pr}(q)$ and $\m$ is not in the support of $K^{\perp}/K$, level raising does not occur by Theorem \ref{thm:shimintersection}. Thus, $\fn$ is not $q$-new. 
\end{proof}
\vv

\subsubsection{Examples} \label{sec:examples1}
When $N$ is a prime, $\eta_q$ is a local generator of an Eisenstein ideal $\cI_0(N) \subseteq \T(N)$ at $\m:=(\ell, ~\cI_0(N))$ 
if and only if $q \not\equiv 1 \pmod \ell$ and $q$ is not an $\ell^\th$  power modulo $N$; however, when $N$ is composite, we don't know such simple congruences to get local generation. 

Consider the easiest case. As in Theorem \ref{thm:s2t3suff}, we assume that $p \not \equiv 1 \pmod \ell$ and $r \equiv -1 \pmod \ell$. Assume further that $r \not \equiv -1 \pmod {\ell^2}$. In this case, we have $I_{\m}=\m \T_{\m}$.
 
Let $f(\tau) = \sum_{n\geq 1} a_n \cdot e^{2\pi i n\tau}$ be a newform of weight 2 for $\Gamma_0(pr)$ whose mod $\ell$ Galois representation is reducible such that $a_p=1$ and $a_r=-1$. 
If $a_q \equiv q+1 \pmod {\m^2}$, then $\eta_q:=T_q-q-1 \in \m^2$, so $\eta_q$ is not a generator of $I_{\m}$.
Moreover, in our examples below, all newforms are defined over $\Q$, i.e., $\T_{\m} = \Z_{\ell}$ and $\m=\ell \Z_{\ell}$.
Thus, $\eta_q$ is not a generator of $I_{\m}$ if and only if $a_q \equiv q+1 \pmod {\ell^2}$.
In the examples below, we follow the notation in Stein's table \cite{St}.

\begin{enumerate}
\item The admissibility for $s=2$ of $(2, q, 19)$ when $\ell=5$.

A newform $f$ of level $pr=38$ (as above) is $E[38, 2]$. Let $a_n$ be the eigenvalue of $T_n$ for $E[38, 2]$.
Then $a_q \equiv q+1 \pmod {25}$ when $q$ = 23, 41, 97, 101, 109, 113, 149, 151, 193, 199, 239, 241, 251, 257,
277, 347, 359, 431, and 479 for primes $q < 500$. Since only $(2, 151)$, $(2, 241)$, $(2, 251)$, and $(2, 431)$
are admissible for $s=2$, a triple $(2, q, 19)$ is admissible for $s=2$ if 
$$
q = 23, 41, 97, 101, 109, 113, 149, 193, 199, 239, 257, 277, 347, 359, \text{ or } 479
$$
for $q<500$.

\begin{rem}
A newform of level $2\times 23 \times 19$ with $a_2=a_{23}=1$ and $a_{19}=-1$ is 
$E[874, 8]$.
\end{rem}

\item The admissibility for $s=2$ of $(3, q, 19)$ when $\ell=5$.

A newform $f$ of level $pr=57$ (as above) is $E[57, 3]$. Let $b_n$ be the eigenvalue of $T_n$ for $E[57, 3]$.
Then $b_q \equiv q+1 \pmod {25}$ when $q$ = 41, 97, 101, 167, 197, 251, 257, 269, 313, 349, 409, 419, 431, and 491
for primes $q < 500$. Since only $(3, 41)$, $(3, 431)$, and $(3, 491)$ 
are admissible for $s=2$, a triple $(3, q, 19)$ is admissible for $s=2$ if
$$
q = 97, 101, 167, 197, 251, 257, 269, 313, 349, 409, \text{ or } 419
$$
for $q<500$.

\item The admissibility for $s=2$ of $(2, q, 29)$ when $\ell=5$.

A newform $f$ of level $pr=58$ (as above) is $E[58, 1]$. Let $c_n$ be the eigenvalue of $T_n$ for $E[58, 1]$.
Then $c_q \equiv q +1 \pmod {25}$ when $q$ = 89, 97, 137, 151, 181, 191, 223, 241, 251, 347, 367, 401, 431, 433, and 491
for primes $q < 500$. Since only $(2, 151)$, $(2, 241)$, $(2, 251)$, and $(2, 431)$  
are admissible for $s=2$, a triple $(2, q, 29)$ is admissible for $s=2$ if 
$$
q = 89, 97, 137, 181, 191, 223, 347, 367, 401, 433, \text{ or } 491
$$
for $q<500$.

\item The admissibility for $s=2$ of $(2, q, 13)$ when $\ell=7$.

A newform $f$ of level $pr=26$ (as above) is $E[26, 2]$. Let $d_n$ be the eigenvalue of $T_n$ for $E[26, 2]$.
Then $d_q \equiv q +1 \pmod {49}$ when $q$ = 43, 101, 223, 229, 233, 269, 307, 311, and 349 
for primes $q < 500$. Since a pair $(2, q)$ is not admissible  for $s=2$ when $q<500$, a triple $(2, q, 13)$ is admissible for $s=2$ if 
$$
q = 43, 101, 223, 229, 233, 269, 307, 311, \text{ or } 349 
$$
for $q<500$.

\end{enumerate}

\begin{rem}
In the last case, a pair $(2, q)$ is admissible for $s=2$ when $q = 631$ and $q=673$. As before, we have 
$$
d_{631} \equiv 1+631 \pmod {49} \text{ and } d_{691} \equiv 1+691 \pmod {49}. 
$$
In other words, computations tells that if a pair $(p, q)$ is admissible for $s=2$ then
$\eta_q$ is not a generator of $I_\m$. 
\end{rem}
\vv

\subsection{Admissible quadruples}
By Theorem \ref{thm:adms1}, a quadruple $(p, q, r, w)$ is admissible for $s=1$ if and only if 
$q\equiv r\equiv w\equiv -1 \modl$; and by Corollary \ref{cor:adms2}, a quadruple $(p, q, r, w)$ 
is admissible for $s=2$ if and only if $r \equiv w \equiv -1 \modl$;
moreover, by Theorem \ref{thm:admnec} and \ref{thm:admsuff}, a quadruple $(p, q, r, w)$ is admissible for $s=3$ if and only if $w \equiv -1 \modl$.

When $s=4$, we do not know what is the precise necessary and sufficient condition for admissibility. Instead, we provide a mild sufficient condition for more general cases:  
Let $s=t$ be an even integer greater than $2$. 
By Theorem \ref{thm:admnec},
if a $t$-tuple $(p_1, \dots, p_t)$ is admissible for $s=t$, $\ell \mid \prod_{i=1}^t (p_i-1)$. Thus, we may assume that $p:=p_1 \equiv 1 \modl$. Let 
\[
I:=(U_{p_i}-1, ~\cI_0^{D}(p) : 1 \leq i \leq t-1) \subseteq \T^D(p)^\new
\]
and $\m:=(\ell,~I)$, where $D=\prod_{i=2}^{t-1} p_i$. 
\begin{thm}\label{thm:s=t}
Assume that $t \geq 4$ is even.
Let $p:=p_1 \equiv 1 \modl$ and let $w:=p_t$. 
A $t$-tuple $(p_1, \dots, p_t)$ is admissible for $s=t$ if $\eta_w:=T_w-w-1$ is not a generator of $I_\m$.
\end{thm}

\begin{proof}
The idea is very similar to those used in the proof of Theorem \ref{thm:s2t2}. Let $X:=X_p(J_0^D(p))$ and $\Phi:=\Phi_p(J_0^D(p))$. Then, by the Ribet's exact sequence in Corollary \ref{cor:Ribetexactsequence}, 
\[
\xymatrix{
0 \ar[r] & \Phi_{\ell} \ar[r] & (X/{\eta_w X})_{\ell} \ar[r] & \Psi^+_{\ell} \ar[r] & 0,
}
\]
where $\Psi=\Phi_w(J_0^{Dpw}(1))$. Let 
\[
\fn:=(\ell,~U_{p_i}-1,~\cI_0^{Dpw}(1) : 1 \leq i \leq t) \subseteq \T^{Dpw}(1)
\]
be the ideal of $\T^{Dpw}(1)$ corresponding to $\m$. For admissibility, it suffices to show that $\Psi_\fn \neq 0$ by the Jacquet-Langlands correspondence. Accordingly, it suffices to show that 
\[
\# (X/{\eta_w X})_\ell \otimes \T_\m =\# (X/{\eta_w X}) \otimes \T_\m > \# (\Phi \otimes \T_\m)=\# (\Phi_\ell \otimes \T_\m),
\]
where $\T:=\T^D(p)^\new$. Note that $\Phi_\ell$ is cyclic and is annihilated by $I$ (Proposition \ref{prop:heckecomp}).
Thus, we have
\[
\#(\T/I)\otimes \T_\m \geq \# (\Phi_\ell \otimes \T_\m)=\# (\Phi_\ell).
\]
Since $X$ is a $\T$-module of rank 1 in the sense of Mazur (\cite[\ts II. 6]{M77}) (cf. \cite[p. 149]{R88}, \cite[Lemma 4.13]{Hm}), we have
\[
\# (X/{\eta_w X} \otimes \T_\m) \geq \#(\T_\m /{\eta_w \T_\m}).
\]
Moreover, if $\eta_w$ is not a generator of $I_\m$, then
\[
\#(\T_\m /{\eta_w \T_\m}) > \# (\T_\m/I_\m) = \# (\T/I) \otimes \T_\m.
\]
Combining all the inequalities above, the result follows.
\end{proof}

\subsubsection{Examples}\label{sec:examples2}
Let $p:=p_1$, $q:=p_2$, $r:=p_3$ and $w:=p_4$. We assume that $p \equiv 1 \pmod \ell$. Consider the easiest case: assume further that $(p-1)(q-1)(r-1)$ is not divisible by $\ell^2$. In this case, $I_{\m}=\m \T_{\m}$, where $\T:=\T^{qr}(p)$. (Here, we note that $\T_\m=\T^{qr}(p)_\m$ since the pair $(q, r)$ is not admissible for $s=2$.)
 
Let  $f(\tau) = \sum_{n\geq 1} a_n \cdot e^{2\pi i n\tau}$ be a newform of weight 2 for $\Gamma_0(pqr)$ whose mod $\ell$ Galois representation is reducible such that $a_p=a_q=a_r=1$. 
If $a_w \equiv w+1 \pmod {\m^2}$, then $\eta_w:=T_w-w-1 \in \m^2$.
Moreover, in our examples below, all newforms are defined over $\Q$, 
i.e., $\T_{\m} = \Z_{\ell}$ and $\m=\ell \Z_{\ell}$. 
Thus, $\eta_w$ is not a generator of $I_{\m}$ if and only if $a_w \equiv w+1 \pmod {\ell^2}$.
In the examples below, we follow the notation in Stein's table \cite{St}.

\begin{enumerate}
\item The admissibility for $s=4$ of $(11, 2, 3, w)$ when $\ell=5$.

A newform $f$ of level $pqr=66$ (as above) is $E[66,2]$. Let $a_n$ be the eigenvalue of $T_n$ for $E[66, 2]$.
Then $a_w \equiv w+ 1 \pmod {25}$ when $w$ = 47, 53, 97, 101, 103, 127, 151, 211, 271, 307, 317, and 431 for primes $w < 500$. Thus, a quadruple $(11, 2, 3, w)$ is admissible for $s=4$ if
$$
w = 47, 53, 97, 47, 53, 97, 101, 103, 127, 151, 211, 271, 307, 317, \text{ or } 431 
$$
for $w<500$.

\item The admissibility for $s=4$ of $(31, 2, 3, w)$ when $\ell=5$.

A newform $f$ of level $pqr=186$ (as above) is $E[186, 3]$. Let $b_n$ be the eigenvalue of $T_n$ for $E[186, 3]$.
Then $b_w \equiv w +1 \pmod {25}$ when $w$ = 19, 43, 59, 67, 71, 101, 109, 113, 131, 157, 181, 191, 227, 281, 283, 307, 331, 349, 359, 421, 431, and 443
for primes $w < 500$. Thus, a quadruple $(31, 2, 3, w)$ is admissible for $s=4$ if
$$
w = 19, 43, 59, 67, 71, 101, 109, 113, 131, 157, 181, 191, 
$$ 
$$
227, 281, 283, 307, 331, 349, 359, 421, 431, \text{ or }443
$$
for $w<500$.
\end{enumerate}
\vv

\subsection{Proof of Theorem \ref{thm:sufficientmain}} \label{sec:proof of 1.3}
The first and third statements are Theorem \ref{thm:admsuff}. The second is Theorem \ref{thm:s=t}. The fourth is Theorem \ref{thm:adms1}, and the fifth is Corollary \ref{cor:adms2}. The sixth is Theorem \ref{thm:ribetraising}, and the last is Theorem \ref{thm:yooraising}. \qed
\vv

\section{Conjectures on admissible tuples}\label{sec:conjecture}
In this section, we introduce a conjecture on the kernel of the degeneracy map between Jacobians of Shimura curves. This conjecture helps us to understand admissible tuples more. 

\subsection{The kernel of the degeneracy map}
In the article \cite{R84}, Ribet studied the kernel of the degeneracy map between Jacobians of modular curves. More specifically, let $\gamma_p$ be the map
$$
\gamma_p : J_0(N)\times J_0(N) \rightarrow J_0(Np)
$$
induced by two degeneracy maps. He proved that the kernel of $\gamma_p$ is an antidiagonal embedding of the Shimura subgroup of $J_0(N)$. Similarly, he proposed Conjecture \ref{conj:intro}.

The proof of the theorem on the kernel of $\gamma_p$ is based on the congruence subgroup property of $\SL_2(\Z[\frac{1}{r}])$. In the thesis \cite{Y13}, the author proved that 
Conjecture \ref{conj:intro} can be deduced from the congruence subgroup property of 
$$
\Gamma_r:=\left( \cO\otimes_{\Z} \Z \left[\frac{1}{r} \right] \right)^{\times,\,1},
$$
where $\cO$ is a maximal order of the quaternion algebra of discriminant $D$ over $\Q$ and $A^{\times,\,1}$ denotes the set of (reduced) norm 1 elements in $A$; so far, the congruence subgroup property of $\Gamma_r$ is an open problem. For more details, see \cite{PR10}.
\vv

\subsection{Proof of Theorems \ref{thm:1.5} and \ref{thm:1.6}}
If we assume that Conjecture \ref{conj:intro} holds, then we can prove another level raising methods, which is very useful to understand admissible tuples more.
\begin{proof}[Proof of Theorem \ref{thm:1.5}]
Note that $1\leq s \leq t$ because we assume that the $t$-tuple $(p_1, \dots, p_t)$ is admissible for $s$.
Let $D:=\prod_{i=1}^t p_i$ and 
$$
\m:=(\ell, U_{p_i}-1, U_{p_j}+1,~\cI_0^D(1)~:~1 \leq i \leq s \text{ and }~s+1 \leq j \leq t) \subseteq \T^D(1).
$$
By our assumption, $\m$ is maximal in $\T^D(1)$ and hence $J_0^D(1)[\m] \neq 0$.  By the same argument as in Section \ref{sec:intersection}, we have
$$
\Omega:=J_0^D(p)_{p\hyp\old} \cap J_0^D(p)_{p\hyp\new} = K^{\perp} / K,
$$
where $K$ is the kernel of $\gamma_p^{\mathrm{Sh}}$. 
By Conjecture \ref{conj:intro},
$K$ is an antidiagonal embedding of $\Sigma$. Therefore $(+1)$-eigenspace of $K$ is $\Sigma$ and $(-1)$-eigenspace of $K$ is trivial. This implies
\[
\Omega[U_p+1] \sim J_0^D(1)[T_p+p+1].
\]
Since $U_{p_i}$ acts on $\Sigma_{p_j}$ by $-1$ (if $i = j$) or $1$ (if $i \neq j$) (Proposition \ref{prop:heckeskorobogatov}), $\Sigma[\m]=0$ if $t-s \neq 1$. Thus,
\[
\Omega[\fn] \simeq J_0^D(1)[\m] \neq 0,
\]
where $\fn:=(\ell, U_{p_i}-1, U_{p_j}+1,~\cI_0^D(p)~:~0 \leq i \leq s \text{ and }~s+1 \leq j \leq t) \subseteq \T^D(p)$.

\begin{enumerate}
\item
Let $p:=p_{t+1}$ and assume that $p\equiv -1 \modl$.
Since $T_p-p-1 \in \m$ and $\ell \in \m$, we have $\eta:=T_p+p+1 \in \m$ as well. Therefore $\Omega[\fa] \simeq J_0^D(1)[\m] \neq 0$, where
$$
\fa:=(\ell,~U_{p_i}-1,~U_{p_j}+1,~\cI_0^D(p)~:~1 \leq i \leq s \text{ and }~s+1 \leq j \leq t+1) \subseteq \T^D(p).
$$
In other words, $\fa$ is $p$-new and hence $\fa$ is new in $\T({pD})$ by the Jacquet-Langlands correspondence. Therefore the $(t+1)$-tuple $(p_1, \dots, p_{t+1})$ is admissible for $s$.

Conversely, if a $(t+1)$-tuple $(p_1, \dots, p_{t+1})$ is admissible for $s$ (with $s\leq t$), then $p_{t+1} \equiv -1 \modl$ by Theorem \ref{thm:admnec}.

\item
Let $p:=p_0$ and assume that $s+2 \leq t$. As above since $t-s \geq 2$, we get $\Omega[\fn] \simeq J_0^D(1)[\m] \neq 0$, which implies that $\fn$ is $p$-new. Thus, the $(t+1)$-tuple $(p_0, \dots, p_t)$ is admissible for $(s+1)$.
\end{enumerate}
\end{proof}

\begin{rem}\label{rem:obstruction}
Here $K$ is the obstruction of level-raising. An important observation used above is the following:
if $K$ does not have support at $\m$, then we can automatically raise the level.
\end{rem}

\begin{proof}[Proof of Theorem \ref{thm:1.6}]
Note that the necessary conditions follow from Theorem \ref{thm:admnec}. We show that the conditions in the theorem are sufficient by induction. 
When $t \leq 4$, the result follows from Theorem \ref{thm:sufficientmain}.
Assume that the theorem holds for $(t-1)\leq 4$. 
\begin{enumerate}
\item
Assume that $s \neq t$ and $t$ is even. Assume further that $p_j \equiv -1 \modl$ for all $s+1 \leq j \leq t$. Then, if $s+1=t$ then the statement follows from Theorem \ref{thm:admsuff}. 
If $s+2 \leq t$, then the $(t-1)$-tuple $(p_2, \dots, p_t)$ is admissible for $(s-1)$ by induction hypothesis. By Theorem \ref{thm:yooraising}, the result follows.

\item 
Assume that $s+2 \leq t$ and $t$ is odd. Assume further that $p_j \equiv -1 \modl$ for all $s+1 \leq j \leq t$. Then, the $(t-1)$-tuple $(p_1, \dots, p_{t-1})$ is admissible for $s$ by induction hypothesis. Thus, the result follows from Theorem \ref{thm:1.5} because $s+2 \leq t$.

\end{enumerate}
Therefore the result holds by induction.
\end{proof}
\vv

\subsection{More on admissible triples for $s=2$} \label{sec: remaining cases}
The cases we have not considered are those with $t=s+1$ for $t\geq 5$ odd. 
Even though similar method to the case $(s, t)=(2, 3)$ works, we will not consider them. Instead, we obtain the following.

\begin{thm}\label{thm:s2t3 with conj}
Assume that Conjecture \ref{conj:intro} holds. If a pair $(p, q)$ is admissible for $s=2$ and $r \equiv -1 \modl$, then a triple $(p, q, r)$ is admissible for $s=2$.
\end{thm}
\begin{proof}
This is a special case of Theorem \ref{thm:1.5} (\ref{thm:1.5.1}). 
\end{proof}
\vv

\appendix 
\section{The component group of $J_0^D(Np)$ over $\F_p$} \label{appendix}
In their article \cite{DR73}, Deligne and Rapoport studied integral models of modular curves. Buzzard and Helm extended their result to the case of Shimura curves \cite[\ts 4]{Bu97} and \cite[Appendix]{Hm}. In this appendix, we discuss the special fiber $J_0^D(Np)_{/{\F_p}}$ of $J_0^D(Np)_{/{\Z}}$ over $\F_p$ for a prime $p$ not dividing $DN$ and the Hecke actions on its component group. Assume that $D$ is the product of an even number of distinct primes and $N$ is a square-free integer prime to $D$.

\subsection{The special fiber $J_0^D(Np)_{/{\F_p}}$}
\begin{prop}[Deligne-Rapoport model]
The special fiber $X_0^D(Np)_{/{\F_p}}$ consists of two copies of $X_0^D(N)_{/{\F_p}}$. They meet transversally at supersingular points.
\end{prop}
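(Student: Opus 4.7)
The plan is to use the moduli interpretation of $X_0^D(Np)$ as parametrizing pairs $(A, C)$, where $A$ is an object with $\Gamma_0^D(N)$-level structure (an elliptic curve when $D=1$ or a false elliptic curve, i.e.\ a QM abelian surface, when $D\neq 1$) and $C \subset A$ is a locally free subgroup scheme of rank $p$, étale-locally cyclic, disjoint from the $\Gamma_0^D(N)$-structure. The two degeneracy maps $\alpha_p,\beta_p: X_0^D(Np) \to X_0^D(N)$ correspond to forgetting $C$ and quotienting by $C$, respectively. After extending scalars to $\Z_p$ and passing to the special fiber, I would classify the geometric points of $X_0^D(Np)_{/{\F_p}}$ according to the behavior of $A\mod p$.

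Over the ordinary locus of $X_0^D(N)_{/{\F_p}}$, the $p$-divisible group (or more precisely, the relevant summand cut out by an idempotent in $\cO\otimes \Z_p$) is an extension of an étale by a multiplicative $p$-divisible group, and the only finite flat subgroups of rank $p$ are the kernel of Frobenius $\ker F$ and the kernel of Verschiebung $\ker V$. Each of these choices, varying over the ordinary locus, assembles into a section of $\alpha_p$; after taking closures, they produce two irreducible components, each mapping isomorphically to $X_0^D(N)_{/{\F_p}}$ via $\alpha_p$. On one component one has $C = \ker F$, and $\beta_p$ on this component is the Frobenius endomorphism of $X_0^D(N)_{/{\F_p}}$; on the other $C = \ker V$, and the roles of $\alpha_p$ and $\beta_p$ are interchanged.

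At a supersingular point $x$ of $X_0^D(N)_{/{\F_p}}$, the relevant $p$-divisible group is connected and of height $2$, so $\ker F = \ker V$ is the unique subgroup scheme of rank $p$; this forces the two components to meet at the points of $X_0^D(Np)_{/{\F_p}}$ lying over $x$. To see transversality I would carry out a local deformation-theoretic computation at such a point: by Serre-Tate / Grothendieck-Messing theory, the complete local ring of $X_0^D(N)_{/{\Z_p^{\mathrm{ur}}}}$ at $x$ is isomorphic to $W(\overline{\F}_p)[[t]]$, and the deformation functor of the pair $(A,C)$ with $C$ a subgroup of rank $p$ in the supersingular formal group is pro-represented by $W(\overline{\F}_p)[[u,v]]/(uv-p)$, with $u,v$ parameters for the two components. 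The two degeneracy maps correspond to the quotients $t\mapsto u$ and $t\mapsto v$, exhibiting the special fiber locally as $\overline{\F}_p[[u,v]]/(uv)$, i.e.\ two smooth branches meeting transversally.

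The main obstacle is the local analysis at supersingular points in the quaternionic case $D\neq 1$, since there one cannot directly invoke the classical Deligne–Rapoport model \cite{DR73} but must use its extension to Shimura curves by Buzzard \cite{Bu97}, which requires identifying the formal neighborhood of a supersingular point via the Cerednik–Drinfeld uniformization and checking that the relevant $\cO$-stable subgroups reduce to the setup above after choosing an idempotent. Once this local identification is in place, the global statement follows by patching together the generic fiber (which is smooth and geometrically connected) with the two components on the ordinary locus along the transverse intersections at the finitely many supersingular points.
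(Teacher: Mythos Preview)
Your outline is essentially the standard proof of this result, and it is correct. However, the paper does not actually prove this proposition: it is stated without proof in the appendix as a classical fact, with references to Deligne--Rapoport \cite{DR73} for the modular curve case $D=1$ and Buzzard \cite{Bu97} for the Shimura curve case $D\neq 1$. So there is no ``paper's own proof'' to compare against; you are supplying the argument that the cited references contain.

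One small comment on your write-up: in the quaternionic case you mention the Cerednik--Drinfeld uniformization, but that describes the fiber at primes dividing $D$, not at the auxiliary prime $p\nmid D$. What you actually need at $p$ is precisely what Buzzard proves, namely that the moduli problem for $\cO$-QM abelian surfaces with $\Gamma_0(p)$-structure has the same local structure as in the classical case, and his argument reduces directly to the Lubin--Tate/Drinfeld local model $W[[u,v]]/(uv-p)$ via the idempotent you mention. So drop the reference to Cerednik--Drinfeld here and cite Buzzard's local computation instead; otherwise your sketch is fine.
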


Let $S$ be the set of supersingular points of $X_0^D(Np)_{/{\F_p}}$. Then $S$ is isomorphic to the set of isomorphism classes of right ideals of an Eichler order of level $N$ of the definite quaternion algebra over $\Q$ of discriminant $Dp$ (cf. \cite[\textsection 3]{R90}). By the theory of Raynaud \cite{Ra70}, we have the following description of the special fiber $J_0^D(Np)_{/{\F_p}}$: It satisfies the following exact sequence
$$
\xymatrix{
0 \ar[r] & J^0 \ar[r] & J_0^D(Np)_{/{\F_p}} \ar[r] & \Phi_p(J_0^D(Np)) \ar[r] & 0,
}
$$
where $J^0$ is \textit{the identity component} and $\Phi_p(J_0^D(Np))$ is \textit{the component group}. Moreover,
$J^0$ is an extension of $J_0^D(N) \times J_0^D(N) _{/{\F_p}}$ by $T$, \textit{the torus of} $J_0^D(Np)_{/{\F_p}}$. The Cartier dual of $T$, $\Hom(T, \Gm)$, is \textit{the character group} $X:=X_p(J_0^D(Np))$. It is isomorphic to the group of degree 0 elements in the free abelian group $\Z^S$ generated by the elements of $S$. (Note that, the degree of an element in $\Z^S$ is the sum of its coefficients.) There is a natural pairing of $\Z^S$ such that
$$
\text{for any } s, t \in S, \quad \langle s, t \rangle := \frac{\#\Aut(s)}{2}\delta_{st},
$$
where $\delta_{st}$ is the Kronecker $\delta$-function. 
This pairing induces an injection $X \hookrightarrow \Hom(X, \Z)$ and the cokernel of it is isomorphic to $\Phi_p(J_0^D(Np))$ by Grothendieck \cite{Gro72}. We call the following exact sequence \textit{the monodromy exact sequence}:
\begin{equation} \label{eqn:monodromy}
\xymatrix{
0 \ar[r] & X \ar[r]^-{i} & \Hom(X, \,\Z) \ar[r] & \Phi_p(J_0^D(Np)) \ar[r] & 0.
}
\end{equation}
For more details, see \cite[\textsection 2 and 3]{R90} when $D=1$, and \cite[\ts 5]{Hm} for general cases.
\vv

\subsection{Hecke actions on $\Phi_p(J_0^D(Np))$}
By \cite[Proposition 3.8]{R90}, the Frobenius automorphism $\Frob_p$ on $X$ is equal to the operator $T_p$ on it. (His work can also be generalized to the case we consider without further difficulties once we know the results by Buzzard and Helm.) Note that $\Frob_p$ sends $s \in S$ to some other $s' \in S$, or might fix $s$. For elements $s, t \in S$ the map $i$ above sends $s-t$ to $\phi_s - \phi_t$, where
$$
\phi_s(x) := \langle s, x \rangle \quad \text{for any } x \in S.
$$
Thus in the group $\Phi_p(J_0^D(Np))$, we have $\phi_s = \phi_t$ for any $s, t \in S$. 
Note that we implicitly exclude the case where $D=N=1$. (In this case, see \cite[\textsection II. 11]{M77}.)
Since for all $s \in S$, the elements $ \frac{2}{\#\Aut(s)}\phi_s$ generate $\Hom(X, \Z)$ and $\#\Aut(s)$ is a divisor of $6$, we have  
$\Phi_p(J_0^D(Np)) \sim \Phi$ (see Notation \ref{notation}), where 
\[
\Phi:=\< \phi_s\> \subset \Phi_p(J_0^D(Np))
\]
is the cyclic subgroup generated by the image of $\phi_s$ for some $s \in S$ (cf. \cite[Proposition 3.2]{R90}).

\begin{prop}\label{prop:heckecomp}
For a prime divisor $r$ of $Dp$ (resp. $N$), $U_r-1$ (resp. $U_r-r$) annihilates $\Phi$. Moreover, for a prime $r$ not dividing $DNp$, $T_r-r-1$ annihilates $\Phi$.
\end{prop}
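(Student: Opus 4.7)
The plan is to exploit the fact, already noted just before the proposition, that all $\phi_s$ for $s \in S$ coincide in $\Phi$. Since $\Phi$ is cyclic generated by the common image $\phi := \phi_s$, any Hecke operator $T$ that acts on $X$ by a ``Brandt-type'' correspondence $T(s) = \sum_{s'} c_{s,s'} s'$ will act on $\phi$ by multiplication by the total degree $\deg(T) = \sum_{s'} c_{s,s'}$, independent of $s$. Thus the whole proposition reduces to computing the degree of each relevant Hecke correspondence on the supersingular module $\Z^S$ (or rather on the degree-zero submodule $X$; the degree on $\Phi$ is inherited via the monodromy sequence \eqref{eqn:monodromy}).

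First I would treat the case $r \nmid DNp$. Here $T_r$ is the classical Brandt operator, which sends a supersingular point $s$ to the sum over all $r$-isogenous neighbors. The number of such neighbors (counted with the appropriate multiplicities that match the Brandt matrix) is $r+1$, so $T_r\phi = (r+1)\phi$ and hence $T_r - r - 1$ annihilates $\Phi$.

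Next I would handle the case $r \mid Dp$. For $r = p$, the excerpt has already recalled Ribet's result $U_p = T_p = \Frob_p$ on $X$; since $\Frob_p$ permutes $S$ and sends $s$ to a single element $\Frob_p(s)$, we get $U_p \phi_s = \phi_{\Frob_p(s)} = \phi$, so $U_p - 1 = 0$ on $\Phi$. For $r \mid D$, the Hecke operator $U_r = T_r$ is the Atkin-Lehner involution at the ramified prime $r$: on the supersingular (Brandt) module it is induced by a two-sided ideal of the relevant Eichler order and is again a degree-one permutation of $S$. Hence $U_r\phi_s = \phi_{w_r(s)} = \phi$ and $U_r - 1 = 0$ on $\Phi$.

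Finally, for a prime $r \mid N$ the Hecke operator $U_r$ on $X$ sums over the $r$ cyclic subgroups of order $r$ of the underlying abelian-variety-with-$\Gamma_0(N)$-structure that are compatible with the existing level-$r$ structure (i.e., all $r+1$ order-$r$ subgroups except the distinguished one), giving a correspondence of degree exactly $r$. Therefore $U_r\phi = r\phi$ in $\Phi$, i.e., $U_r - r$ annihilates $\Phi$. The main technical step is the degree computation in the last two paragraphs, namely verifying the combinatorics of Brandt matrices in the quaternionic setting (especially the $r \mid D$ case, where one has to use that the Eichler order localized at $r$ is the maximal order in the ramified quaternion algebra $B_r$); once these degrees are in hand, all four assertions follow uniformly from the identification $\phi_s = \phi_t$ in $\Phi$.
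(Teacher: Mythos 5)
Your proof is correct and follows essentially the same route as the paper: in every case the claim is reduced to computing the total degree of the relevant Hecke correspondence on the supersingular module and then applying the identity $\phi_s=\phi_t$ in $\Phi$. The cases $r=p$, $r\mid N$, and $r\nmid DNp$ are handled exactly as in the paper. The one sub-case you argue differently is $r\mid D$: you invoke the identification $U_r=w_r$ together with the two-sided-ideal description of the Atkin--Lehner action to see that $U_r$ permutes $S$, whereas the paper instead observes that $S$ (being the set of ideal classes of an Eichler order of level $N$ in the definite quaternion algebra of discriminant $Dp$) is the same for reduction at any prime $q\mid Dp$, so the character and component groups are independent of the choice of $q\mid Dp$; this lets the paper reuse the Frobenius description $U_q=\Frob_q$ already established for $q=p$, avoiding a separate Atkin--Lehner computation. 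Both arguments are correct and give $U_q\phi_s=\phi_t$ for a single $t\in S$, hence $U_q\phi_s=\phi_s$ in $\Phi$.
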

\begin{proof}
On $\Phi$, we have $\phi_s = \phi_t$. Thus, $U_p(\phi_s)=\phi_t=\phi_s$, where $t=\Frob_p(s)$. Since $S$ is isomorphic to the set of isomorphism classes of right ideals on an Eichler order of level $N$ in the definite quaternion algebra over $\Q$ of discriminant $Dp$, the set of supersingular points of $X_0^{Dp/q}(Nq)_{/{\F_q}}$ is again $S$ for a prime divisor $q$ of $D$; in other words, the character group of $J_0^{Dp/q}(Nq)_{/{\F_q}}$ does not depend on the choice of a prime divisor $q$ of $Dp$. (Hence the same is true for the component group by the monodromy exact sequence.) Using the same description as above, we have $U_q(\phi_s)=\phi_s$ for a prime divisor $q$ of $D$.

Since the degree of the map $U_r$ is $r$ for a prime divisor $r$ of $N$, $U_r(\phi_s)=\sum a_i \phi_{s_i}$ and $\sum a_i = r$. Therefore $U_r(\phi_s)=r\phi_s$ because in $\Phi$, $\phi_s = \phi_{s_i}$ for all $i$. Similarly, $T_r(\phi_s)=(r+1)\phi_s$ for a prime $r$ not dividing $DNp$.
\end{proof}
\vv

\subsection{The order of $\Phi$}
\begin{prop}\label{prop:ordercomp}
The order of $\Phi$ is equal to $\varphi(Dp)\psi(N)$ \upto.
\end{prop}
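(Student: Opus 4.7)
The plan is to read off $|\Phi|$ directly from the monodromy exact sequence \eqref{eqn:monodromy}, working away from the primes $2$ and $3$ so that the weights $n_s:=\#\Aut(s)/2\in\{1,2,3\}$ become units, and then to apply the Eichler mass formula to identify the resulting cyclic group.

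First I would fix coordinates. Let $L:=\Z^S$ carry the pairing $\langle e_s,e_t\rangle=n_s\delta_{st}$; then $X=X_p(J_0^D(Np))$ is the kernel of $\deg\colon L\to\Z$. Writing $L^*:=\Hom(L,\Z)$ with dual basis $e_s^*$, the degree map dualises to $\Z\hookrightarrow L^*$, $1\mapsto\sigma:=\sum_s e_s^*$, and so $\Hom(X,\Z)\cong L^*/\Z\sigma$. In these coordinates the map $i$ of \eqref{eqn:monodromy} factors as
$$
X\hookrightarrow L\xrightarrow{\,e_s\mapsto n_s e_s^*\,}L^*\twoheadrightarrow L^*/\Z\sigma,
$$
so $\Phi_p(J_0^D(Np))$ is the cokernel of $X$ inside $L^*/\Z\sigma$ via this composition.

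Next I would tensor with $\Z[\tfrac16]$. Because each $n_s$ divides $6$, the middle arrow becomes an isomorphism, under which $\sigma$ corresponds to $\xi:=\sum_s n_s^{-1}e_s\in L\otimes\Z[\tfrac16]$. Transporting the monodromy sequence through this isomorphism identifies $\Phi_p(J_0^D(Np))\otimes\Z[\tfrac16]$ with $(L\otimes\Z[\tfrac16])/(X+\Z[\tfrac16]\cdot\xi)$, and the degree map realises the latter as
$$
\Phi_p(J_0^D(Np))\otimes\Z[\tfrac16]\;\cong\;\Z[\tfrac16]\big/\bigl(\textstyle\sum_s n_s^{-1}\bigr)\Z[\tfrac16].
$$
This reduces the whole problem to evaluating $\sum_s n_s^{-1}=2\sum_s\#\Aut(s)^{-1}$ up to $\Z[\tfrac16]$-units.

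Finally I would invoke the Eichler mass formula applied to an Eichler order of level $N$ in the definite quaternion algebra over $\Q$ of discriminant $Dp$, whose right ideal classes parametrise $S$: it expresses $\sum_s\#\Aut(s)^{-1}$ as $\varphi(Dp)\psi(N)$ divided by an absolute factor of the shape $2^a3^b$. Since $6$ is a unit in $\Z[\tfrac16]$, combining this with the displayed formula yields
$$
\Phi_p(J_0^D(Np))\otimes\Z[\tfrac16]\;\cong\;\Z[\tfrac16]\big/\bigl(\varphi(Dp)\psi(N)\bigr),
$$
and reading off $\ell$-primary parts for each $\ell\geq 5$ gives $|\Phi|=\varphi(Dp)\psi(N)$ up to products of powers of $2$ and $3$. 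The only non-formal input is the Eichler mass formula; the rest is a diagram chase after inverting the diagonal matrix $\mathrm{diag}(n_s)$. The main obstacle is simply keeping the normalisations straight, since different sources place different powers of $2$ in the denominator of the definite mass formula, but this ambiguity is harmless in our $\sim$-equivalence.
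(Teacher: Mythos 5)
Your argument is correct and uses the same two inputs as the paper's proof: the monodromy exact sequence \eqref{eqn:monodromy} and Eichler's mass formula. The difference lies only in the intermediate bookkeeping. The paper computes the order $n$ of the chosen generator $\phi_s$ of $\Phi$ by a direct device: writing $n=\sum n_i$, pairing against the test divisors $s_i-s_j$, and concluding that the products $n_i\,\#\Aut(s_i)/2$ must be a common constant, whence $n = \sum_s 6/\#\Aut(s)$ up to $2,3$-powers. You instead transport the entire monodromy sequence through the isomorphism that inverts the diagonal Gram matrix $\mathrm{diag}(\#\Aut(s)/2)$ over $\Z[\tfrac16]$, thereby identifying $\Phi_p(J_0^D(Np))\otimes\Z[\tfrac16]$ in one stroke with the cyclic quotient $\Z[\tfrac16]/\bigl(\sum_s n_s^{-1}\bigr)$; this is a cleaner, more structural packaging of the same content, and the mass formula with $K=\Q$ finishes both versions identically. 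One small bookkeeping remark: the proposition is stated for the cyclic subgroup $\Phi\subseteq\Phi_p(J_0^D(Np))$ generated by $\phi_s$, but since the paper has already observed that $\Phi_p(J_0^D(Np))\sim\Phi$, your computation of $\Phi_p\otimes\Z[\tfrac16]$ yields exactly what is asserted about $\Phi$.
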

\begin{proof}
Let $n$ be the order of $\Phi$. Therefore, for any degree 0 divisor $t=\sum a_i s_i$, we have $n\phi_s(t)=0$. We decompose $n$ as a sum $\sum n_i$ for non-negative integers $n_i$. Then
$$
n\phi_s(t)= \left(\sum\limits_j n_j \phi_s \right) \left(\sum\limits_i a_i s_i \right) = \sum\limits_j n_j\left(\phi_{s_j}\left(\sum\limits_i a_i s_i\right)\right) = \sum\limits_j n_j a_j \frac{\#\Aut(s_j)}{2}=0.
$$
Therefore, for any $i \neq j$, we have $n_i \frac{\#\Aut(s_i)}{2}=n_j \frac{\#\Aut(s_j)}{2} = c$ by taking $t=s_i-s_j$. Since $n>0$, each $n_i$ is positive and it is equal to $\frac{2c}{\#\Aut(s_i)}$, where $2c$ is the smallest positive integer which makes $\frac{2c}{\#\Aut(s_i)}$ an integer for all $i$. Since $\#\Aut(s_i)$ divides $6$, $2c$ is a divisor of $6$. Thus, we have
$$
n = \sum\limits_{s \in S} \frac{6}{\#\Aut(s)}
$$
\upto. 

Recall Eichler's mass formula \cite[Corollary 5.2.3]{Vig80}.
\begin{prop}[Mass formula]
Let $S$ be the set of isomorphism classes of right ideals of an Eichler order of level $N$ in a definite quaternion algebra of discriminant $Dp$ over a number field $K$. Then
$$
\sum\limits_{s \in S} \frac{\# R^{\times}}{\#\Aut(s)} = 2^{1-d} \times | \zeta_K(-1) | \times h_K \times \varphi(Dp) \psi(N),
$$
where $R$ is the ring of integers of $K$, $\zeta_K$ is the Dedekind zeta function of $K$, $d$ is the degree of $K$ over $\Q$, and $h_K$ is the class number of $K$.
\end{prop}

In our case $K=\Q$, so $| \zeta_K(-1)|=\frac{1}{12}$, $d_K=h_K=1$, and $\#R^{\times}=2$. Thus, the result follows.
\end{proof}
\vv

\subsection{The degeneracy map between component groups} \label{app:deg}
Let $q$ be a prime not dividing $DNp$.
Let $\Phi$ (resp. $\Phi'$) be the cyclic subgroup of the component group of $J_0^D(Np)$ (resp. $J_0^D(Npq)$) at $p$ generated by the image of $\phi_s$ (resp. $\phi_{t}$) for some $s\in S$  (resp. $t \in S'$) as above, where $S$ (resp. $S'$) be the set of supersingular points of $X_0^D(Np)_{/{\F_p}}$ (resp. $X_0^D(Npq)_{/{\F_p}}$), i.e.,  $\Phi = \langle \phi_s \rangle$ (resp. $\Phi' = \langle \phi_t \rangle$).

Let $\gamma_q^{\mathrm{Sh}} : J_0^D(Np) \times J_0^D(Np) \rightarrow J_0^D(Npq)$ be the map defined by $\gamma_q^{\mathrm{Sh}} (a, b) = \alpha_q^*(a)+\beta_q^*(b)$, where
$\alpha_q, \beta_q$ are two degeneracy maps $X_0^D(Npq) \rightarrow X_0^D(Np)$. Then, $\gamma_q^{\mathrm{Sh}}$ induces a map $\gamma : \Phi \times \Phi \rightarrow \Phi'$. 

\begin{prop}\label{prop:degcomp}
Let $K'$ (resp. $C'$) be the kernel (resp. cokernel) of the map $\gamma$, i.e., we have an exact sequence
$$
\xymatrix{
0 \ar[r] & K' \ar[r] & \Phi \times \Phi \ar[r]^-{\gamma} & \Phi' \ar[r] & C' \ar[r] & 0.
}
$$
Then, $K' \sim \Phi$ and $C' \simeq \Z/{(q+1)\Z}$. 
\end{prop}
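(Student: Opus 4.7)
The plan is to relate the map $\gamma$ on the cyclic subgroups $\Phi, \Phi'$ to the analogous map on the full component groups, invoke Ribet's exact sequence (Theorem \ref{thm:Ribetexactsequence}) for the latter, and finally clean up the $2$- and $3$-primary parts by a direct supersingular computation.

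First, I would record the order comparison. By Proposition \ref{prop:ordercomp}, one has $|\Phi| = \varphi(Dp)\psi(N)$ and $|\Phi'| = \varphi(Dp)\psi(Nq) = (q+1)\varphi(Dp)\psi(N)$, both \upto. Hence $|\Phi'|/|\Phi| = q+1$ away from $6$. Both $\Phi$ and $\Phi'$ are cyclic by the proof of that proposition applied to the Eichler orders of levels $N$ and $Nq$, respectively.

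Next, consider the induced map on the full component groups,
\[
\gamma_{\mathrm{full}}: \Phi_p(J_0^D(Np))^2 \to \Phi_p(J_0^D(Npq)),
\]
which is precisely the setting of Theorem \ref{thm:Ribetexactsequence}; the comment just after that theorem gives that its kernel $K_0$ and cokernel $C_0$ satisfy $K_0 \sim \Phi_p(J_0^D(Np))$ and $C_0 \sim \Z/(q+1)\Z$. Because the inclusions $\Phi \hookrightarrow \Phi_p(J_0^D(Np))$ and $\Phi' \hookrightarrow \Phi_p(J_0^D(Npq))$ are equalities on every $\ell$-primary component for $\ell \geq 5$, and $\gamma$ is simply the restriction of $\gamma_{\mathrm{full}}$ to $\Phi \times \Phi$, a short diagram chase yields $K_\ell \cong \Phi_\ell$ and $C_\ell \cong (\Z/(q+1)\Z)_\ell$ for every such $\ell$. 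This establishes $K \sim \Phi$ and the weak form $C \sim \Z/(q+1)\Z$.

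The main obstacle is to upgrade the cokernel statement to the exact isomorphism $C \simeq \Z/(q+1)\Z$ claimed in the proposition; equivalently, to handle the primes $2$ and $3$ directly. Since $\Phi'$ is cyclic, so is $C$, and the task reduces to computing $|\mathrm{Im}\,\gamma|$ on the nose. For this I would make $\alpha_q^*, \beta_q^*: \Phi \to \Phi'$ explicit on the generators $\phi_s$, using: (i) the identifications of $S$ and $S'$ with right-ideal classes of Eichler orders of levels $N$ and $Nq$ in the definite quaternion algebra of discriminant $Dp$; (ii) the fact that each of the two degeneracy correspondences $S' \rightrightarrows S$ has fibers of size $q+1$; and (iii) the pairing formula $\phi_s(t)=\langle s,t\rangle$. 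Tracing these through the monodromy sequence (\ref{eqn:monodromy}) should yield explicit expressions for $\alpha_q^*(\phi_s), \beta_q^*(\phi_s)$ inside $\Phi'$, after which pinning down the index $[\Phi':\mathrm{Im}\,\gamma]$ at $\{2,3\}$ completes the argument. The hardest step is exactly this $\{2,3\}$-bookkeeping, where the automorphism factors $\#\Aut(s')/2$ appearing in the pairing need to be treated carefully to ensure that no spurious $2$- or $3$-torsion survives in $C$ beyond what the order count already allows.
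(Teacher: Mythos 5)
Your main step is circular within the paper's own logical structure: you quote the comment immediately following Theorem~\ref{thm:Ribetexactsequence} for the claims $K_0 \sim \Phi_p(J_0^D(Np))$ and $C_0 \sim \Z/(q+1)\Z$, but that comment is itself justified in the paper by ``For more detail, see Appendix~\ref{appendix}''---that is, by Proposition~\ref{prop:degcomp} (the very statement you are proving) together with the remark following it. Your observation that $\Phi$ and $\Phi'$ agree with the full component groups on $\ell$-primary parts for $\ell\geq 5$ is correct, but it only lets one pass from Proposition~\ref{prop:degcomp} to that comment, not the other way around.

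The non-circular route you sketch at the end is in fact the paper's entire proof, but you leave it undone and misplace where the work is. There is no need to chase $\#\Aut$ factors or do separate $\{2,3\}$-bookkeeping: the crux, which your sketch does not quite reach, is that in $\Phi'$ every $\phi_{t_i}$ equals the single generator $\phi_t$, so $\alpha_q^*(\phi_s)=\sum_i a_i\phi_{t_i}$ collapses to $(q+1)\phi_t$ because $\sum_i a_i=q+1$ (the degree of $\alpha_q^*$), and likewise $\beta_q^*(\phi_s)=(q+1)\phi_t$. From $\alpha_q^*(\phi_s)=\beta_q^*(\phi_s)=(q+1)\phi_t$ one reads off the cokernel directly (the image is $(q+1)\Phi'$, so $C=\Phi'/(q+1)\Phi'$ is cyclic of order $q+1$) and the kernel (the antidiagonal $\{(a,-a):a\in\Phi\}$ lies in $K$, and the order count via Proposition~\ref{prop:ordercomp} then forces $K\sim\Phi$) in one stroke. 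The automorphism weights enter the pairing and the mass formula, not this computation.
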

\begin{proof}
Since the degree of $\alpha_q^*$ is $q+1$, we have $\alpha_q^*(s) = \sum a_i t_i$ for some $t_i \in S'$, where $\sum a_i = q+1$. Because $\phi_{t_i}=\phi_t$ in $\Phi'$, we have $\alpha_q^*(\phi_s)=(q+1)\phi_t$.
By the same argument as above, we have $\beta_q^*(\phi_s)=(q+1)\phi_t$. Thus, the image of $\gamma$ is generated by $(q+1)\phi_t$ and $C' \simeq \Z/{(q+1)\Z}$.
Since $\alpha_q^*(\phi_s)=\beta_q^*(\phi_s)$, we have $(a,-a) \in K'$ for any $a \in \Phi$. By comparing their orders, we have $K' \sim \Phi$.
\end{proof}

\begin{cor}\label{cor:cokernelcomp}
For primes $r \mid Dp$ (resp. $r \mid N$, $r \nmid DNpq$), $U_r-1$ (resp. $U_r-r$, $T_r-r-1$) annihilates $C'$. Moreover, $U_q+1$ annihilates $C$.
\end{cor}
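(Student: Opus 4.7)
The plan is to view $C$ as a Hecke-module quotient of $\Phi' := \Phi_p(J_0^D(Npq))$ (the cyclic subgroup appearing in Section \ref{app:deg}) and to read off the annihilators directly from Proposition \ref{prop:heckecomp} applied to $\Phi'$. Any element of the Hecke algebra that annihilates $\Phi'$ will then automatically annihilate $C$.

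First I would verify that the Hecke action on $\Phi'$ descends to $C = \Phi'/\gamma(\Phi \times \Phi)$. For a prime $r \neq q$, the Hecke operator $T_r$ (or $U_r$ when $r \mid DNp$) commutes with the two degeneracy maps $\alpha_q^*, \beta_q^*$, hence commutes with $\gamma$; so $\gamma(\Phi \times \Phi)$ is a $T_r$-stable subgroup of $\Phi'$, and the induced $T_r$-action on $C$ is well defined. For $U_q$ the descent will follow once we see below that $U_q$ acts on $\Phi'$ as a scalar.

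Next I would apply Proposition \ref{prop:heckecomp} directly to $\Phi'$, where the role of ``$N$'' in that proposition is played here by $Nq$ (the role of the ``enhanced discriminant'' $Dp$ being unchanged, since we are still looking at the component group at $p$). This immediately yields that $U_r - 1$ annihilates $\Phi'$ for $r \mid Dp$, that $U_r - r$ annihilates $\Phi'$ for $r \mid N$, and that $T_r - r - 1$ annihilates $\Phi'$ for $r \nmid DNpq$. By the previous paragraph, each of these descends to $C$, giving the first three assertions of the corollary.

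For the final assertion, Proposition \ref{prop:heckecomp} applied to $\Phi'$ also gives that $U_q - q$ annihilates $\Phi'$ (since $q \mid Nq$), so $U_q$ acts on $\Phi'$ as the scalar $q$ and in particular its action descends to $C$. Now Proposition \ref{prop:degcomp} identifies $C \simeq \Z/{(q+1)\Z}$, so the integer $q+1$ annihilates $C$. Writing $U_q + 1 = (U_q - q) + (q + 1)$ then shows both summands act as zero on $C$, hence $U_q + 1$ annihilates $C$. The argument is essentially formal; the only point that needs any care is the descent of the $U_q$-action, and this is automatic once we know that $U_q$ acts by a scalar on $\Phi'$.
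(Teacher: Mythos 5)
Your proposal is correct and takes the same approach as the paper: apply Proposition \ref{prop:heckecomp} to $\Phi'$ (the component group at $p$ at level $Npq$), note that the Hecke action descends to the quotient $C$, and combine $U_q - q$ annihilating $\Phi'$ with $C \simeq \Z/(q+1)\Z$ from Proposition \ref{prop:degcomp} to get $U_q + 1$ annihilating $C$. The paper's proof is the one-liner ``This follows from \ref{prop:heckecomp}''; you have correctly filled in the details, including the implicit reliance on Proposition \ref{prop:degcomp} for the final assertion.
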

\begin{proof}
This follows from \ref{prop:heckecomp} because the order of $C$ is $q+1$.
\end{proof}

\begin{rem} \label{rem:C C'}
If $\ell\geq 5$ is prime, then the $\ell$-primary part of $K'$ (resp. $C'$) is equal to the one of the kernel $K$ 
(resp. cokernel $C$) of the map 
$$
\xymatrix{
\gamma_q^{\mathrm{Sh}} : \Phi_p(J_0^D(Np)) \times \Phi_p(J_0^D(Np)) \ar[r] &  \Phi_p(J_0^D(Npq))
}
$$
because $\Phi_p(J_0^D(Np))$ (resp. $\Phi_p(J_0^D(Npq))$) and $\Phi$ (resp. $\Phi'$) are equal up to 2-, 3- primary subgroups. 
\end{rem}
\vv

\section{Multiplicity one theorems}
In this section, we prove the multiplicity one theorems for Jacobians of modular and Shimura curves.
\subsection{The dimension of $J_0^{pr}(1)[\m]$} 
Let $J:=J_0^{pr}(1)$ be the Jacobian of the Shimura curve $X_0^{pr}(1)$ and $\T:=\T^{pr}(1)$ the Hecke ring in $\End(J)$. 
Assume that $p \not \equiv 1 \pmod \ell$ and $r \equiv -1 \pmod \ell$. Let
\begin{align*}
\fa:=(\ell,~U_p-1,~U_r+1,~\cI_0(pr)) &\subseteq \T(pr),\\
\fb:=(\ell,~U_r-1,~\cI_0(r)) &\subseteq \T(r), \\
\m:=(\ell, ~U_p-1, ~U_r+1, ~\cI_0) & \subseteq \T.
\end{align*}
Since $r\equiv -1 \modl$, $\fb=\T(r)$. Also, $\fa$ is neither $p$-old nor $r$-old. Therefore $\T_\m \simeq \T(pr)_{\fa}$. In this case we can prove the following multiplicity one theorem for $J[\m]$:

\begin{thm}[Ribet] \label{thm:appmulti}
$J[\m]$ is of dimension 2.
\end{thm}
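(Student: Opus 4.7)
The plan is to read $J[\m]$ off from the character group $X := X_p(J)$ of the N\'eron model of $J = J_0^{pr}(1)$ at $p$, exploiting the purely toric reduction of $J$ at $p$ furnished by the Cerednik-Drinfeld model. By Grothendieck's description of the $\ell$-adic Tate module of an abelian variety with purely multiplicative reduction, there is a $\T_{\m}[\Gal(\Qpbar/\Q_p)]$-equivariant short exact sequence
$$
0 \longrightarrow \Hom(X, \Z_\ell(1)) \longrightarrow \Ta_\ell(J) \longrightarrow \Hom(X, \Z) \otimes \Z_\ell \longrightarrow 0.
$$
The monodromy exact sequence (\ref{eqn:monodromy}) identifies the cokernel of $X \hookrightarrow \Hom(X, \Z)$ with the component group $\Phi_p(J)$, on which $U_r$ acts as $1$ by Proposition \ref{prop:heckecomp}; since $U_r + 1 \in \m$ and $2$ is invertible in $\F_\ell$, the $\m$-primary part of $\Phi_p(J)$ is trivial, so $X_{\m} \cong \Hom(X, \Z)_{\m}$ as $\T_{\m}$-modules. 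Once $X_{\m}$ is known to be free of rank $1$ over $\T_{\m}$, the vanishing of $\Ext^1_{\T_{\m}}(\T_{\m}, \T_{\m})$ forces the completed Tate module $\Ta_\ell(J)_{\m}$ to be free of rank $2$ over $\T_{\m}$, whence $J[\m] = \Ta_\ell(J)_{\m} \otimes_{\T_{\m}} \T_{\m}/\m$ has $\F_\ell$-dimension exactly $2$.

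The heart of the matter is therefore the rank-one freeness of $X_{\m}$ over $\T_{\m}$. By the Jacquet-Langlands identification $\T^{pr} \cong \T(pr)^{pr\hyp\new}$ the module $X$ has generic rank one over $\T$, so Nakayama reduces the question to showing $\dim_{\F_\ell}(X/\m X) = 1$. To establish this I would pick an auxiliary prime $q$ with $q \nmid 6\ell pr$ and apply Theorem \ref{thm:Ribetexactsequence} with $D = pr$ and $N = 1$, together with its character-group analogue, relating $X$ to $Y := X_r(J_0^{pr}(q))$ and, through $Y$, to the classical character group $L := X_p(J_0(pqr))$. The hypothesis $p \not\equiv 1 \pmod \ell$ guarantees via Theorem \ref{thm:s2t2} that any pair $(p, q)$ for $s = 2$ is inadmissible, so the side term $X_p(J_0(pq))^{\oplus 2}$ has no $\m$-support and drops out after completing at $\m$; what remains identifies $Y_{\m'}$ with $L_{\fa}$, where $\m' \subseteq \T^{pr}(q)$ and $\fa \subseteq \T(pqr)$ are the ideals corresponding to $\m$ under Jacquet-Langlands. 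Then $L_{\fa}$ is free of rank $1$ over $\T(pqr)_{\fa}$ by Theorem \ref{thm:multipqr} combined with Nakayama, and this freeness transfers back to $X_{\m}$ along the degeneracy exact sequence at $q$, which is injective at $\m$ because $\m$ is $q$-new by construction.

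The main obstacle will be this last transfer: matching the Eisenstein ideals $\m$, $\m'$, $\fa$ consistently across Jacquet-Langlands, and verifying that no $p$-old, $r$-old, or $q$-old contribution survives the completion at $\m$. The twin hypotheses $p \not\equiv 1 \pmod \ell$ and $r \equiv -1 \pmod \ell$ are precisely what make the bookkeeping go through: the former rules out $p$-old pollution via the inadmissibility of pairs $(p, q)$ for $s = 2$, while the latter places us in the setting where the Ribet-style saturation argument already developed earlier in the paper applies, ensuring that $\m$ is genuinely new at both $p$ and $r$ so that passing up to level $pqr$ introduces no extra old-form contamination.
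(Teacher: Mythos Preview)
Your skeleton---purely toric reduction at $p$, the Tate-module filtration, killing $\Phi_\m$, and then proving the character group is locally free of rank one---is exactly the paper's, but your implementation diverges from it in two places and one of them is a genuine gap.

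First, your appeal to Proposition~\ref{prop:heckecomp} for ``$U_r$ acts as $1$ on $\Phi_p(J)$'' is misplaced: that proposition (and all of Appendix~\ref{appendix}) is written for the Deligne--Rapoport fibre of $J_0^D(Np)$ at a prime $p$ \emph{in the level}, whereas here $p$ lies in the discriminant of $J_0^{pr}(1)$ and the relevant model is Cerednik--Drinfeld. The paper does not claim any such eigenvalue statement; instead it obtains $\Phi_\m=0$ by applying Theorem~\ref{thm:Ribetexactsequence} with $D=N=1$ and the pair of primes $(r,p)$, so that $\Psi=\Phi_p(J_0^{pr}(1))$ sits between pieces built from $X_r(J_0(r))$, $\Phi_r(J_0(r))$, and $\Z/(p{+}1)\Z$, all of which lose their $\m$-support because $\m$ is neither $p$-old nor $r$-old.

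Second, the detour through an auxiliary prime $q$ and Theorem~\ref{thm:multipqr} is unnecessary and the ``transfer back to $X_\m$ along the degeneracy exact sequence at $q$'' is never made precise. The paper avoids this entirely: it applies Ribet's character-group sequence \emph{directly} at level $pr$,
\[
0 \longrightarrow X_p(J_0^{pr}(1)) \longrightarrow X_r(J_0(pr)) \longrightarrow X_r(J_0(r))^{2} \longrightarrow 0,
\]
observes that the right-hand term has no $\m$-support (again because $\m$ is not old), and concludes $X_p(J_0^{pr}(1))_\m \simeq X_r(J_0(pr))_\fa$, which is free of rank one over $\T(pr)_\fa\simeq\T_\m$ by the multiplicity-one result of \cite{Yoo14}. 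No auxiliary prime, no $J_0(pqr)$, and no delicate ideal-matching across three Hecke rings is required.
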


For a proof, we need the following proposition.

\begin{prop} 
$\T_{\m}$ is Gorenstein.
\end{prop}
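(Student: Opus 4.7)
The plan is to prove Gorensteinness of $\T_\m$ by showing that the character group $X := X_r(J_0^{pr}(1))$ of the Shimura Jacobian at $r$ is free of rank $1$ over $\T_\m$ after localization at $\m$. Given this freeness, the monodromy exact sequence (\ref{eqn:monodromy}) of Appendix \ref{appendix} furnishes an injection $\T_\m \simeq X_\m \hookrightarrow \Hom_{\Z_\ell}(X_\m, \Z_\ell) \simeq \Hom_{\Z_\ell}(\T_\m, \Z_\ell)$ with finite cokernel (the $\ell$-part of the component group at $r$). Since $\T_\m$ is a one-dimensional Cohen--Macaulay local ring (finite and torsion-free over $\Z_\ell$), the dualizing module $\Hom_{\Z_\ell}(\T_\m, \Z_\ell)$ is then forced to be free of rank $1$ over $\T_\m$, which is precisely the Gorenstein condition.

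To prove that $X_\m$ is free of rank $1$ over $\T_\m$, I would first note that by the Jacquet--Langlands correspondence $\T^{pr} \simeq \T({pr})^{pr\hyp\new}$, the character group $X$ is faithful of generic rank $1$ as a $\T \otimes \Q$-module. By Nakayama's lemma, freeness then reduces to proving $\dim_{\F_\ell} X/\m X \leq 1$. I would attack this in parallel with the saturation proposition in Section \ref{sec:levelraising}: pick an auxiliary prime $q$ with $q \not\equiv 1 \modl$ and apply Ribet's exact sequence (Theorem \ref{thm:Ribetexactsequence}) relating $X_r(J_0^{pr}(q))$ to $X_p(J_0(pqr))$. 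Under the hypothesis $p \not\equiv 1 \modl$, the ``$pq$-old'' summand vanishes at the corresponding Eisenstein maximal ideal by the non-admissibility of the pair $(p, q)$ for $s = 2$ (Theorem \ref{thm:s2t2}), identifying $X_r(J_0^{pr}(q))_{\tilde\m}$ with $X_p(J_0(pqr))_\fa$. Combining the component-group analysis of Appendix \ref{appendix} with Mazur's rank-$1$ principle for character groups on the modular side, $X_p(J_0(pqr))_\fa$ is cyclic over $\T({pqr})_\fa$, hence free of rank~$1$.

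The main obstacle is the descent from auxiliary level $q$ back to the original Jacobian. Under Jacquet--Langlands, the $q$-new quotient $\T^{pr}(q)^{q\hyp\new}_{\tilde\m}$ is isomorphic to $\T^{pr}_\m$, so one needs to verify that $\tilde\m$ is $q$-new for an appropriate choice of $q$ and that the freeness of $X_r(J_0^{pr}(q))_{\tilde\m}$ descends to freeness of $X_r(J_0^{pr}(1))_\m$. The first is a level-raising statement of the same flavor as those in Section \ref{sec:levelraising}; the second requires tracking how the degeneracy maps on character groups interact with the Jacquet--Langlands identification of the $q$-new parts. I expect these bookkeeping points, rather than any conceptual step, to be the technical heart of the argument, and a judicious choice of $q$ (avoiding $q \equiv \pm 1 \modl$) should make the transfer of cyclicity between the two sides clean.
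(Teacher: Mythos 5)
Your plan has a correct top-level template — show a character group localized at $\m$ is free of rank $1$, then use the monodromy sequence to extract self-duality — but two of the steps fail, and the first failure is fatal as written.

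First, the claimed implication ``finite, torsion-free over $\Z_\ell$ plus an injection into $\Hom_{\Z_\ell}(\T_\m,\Z_\ell)$ with finite cokernel forces the dualizing module to be free of rank $1$'' is not a theorem. For a one-dimensional CM local $\Z_\ell$-algebra the dualizing module $\omega_{\T_\m}=\Hom_{\Z_\ell}(\T_\m,\Z_\ell)$ always has rank $1$, so any $\T_\m$-linear injection $\T_\m\hookrightarrow\omega_{\T_\m}$ automatically has finite cokernel. Gorensteinness is the additional condition that $\omega_{\T_\m}$ be \emph{free} of rank $1$, and the only way the monodromy exact sequence gives that conclusion is by showing that the cokernel — the component group localized at $\m$ — is zero. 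The paper does exactly this: after establishing $Y_\m$ free of rank $1$ (where $Y:=X_p(J_0^{pr}(1))$), it invokes Ribet's exact sequence for component groups to show $\Phi_p(J_0^{pr}(1))_\m=0$, whence $Y_\m\simeq\Hom(Y_\m,\Z_\ell)$ and $\T_\m$ is self-dual. Your proposal omits this step entirely.

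Second, your choice of the character group at $r$ rather than at $p$ is not symmetric, and it breaks the argument even if the first issue is repaired. The paper works with $Y=X_p(J_0^{pr}(1))$ precisely because Ribet's sequence presents $\Phi_p(J_0^{pr}(1))$ sandwiched between terms built out of $X_r(J_0(r))$ and a cokernel annihilated by $U_p+1$: under the hypotheses $r\not\equiv 1\modl$, $U_p-1\in\m$, and $U_r+1\in\m$, all adjacent terms vanish at $\m$, so $\Phi_p(J_0^{pr}(1))_\m=0$. If you instead use $X_r(J_0^{pr}(1))$, the corresponding component group $\Phi_r(J_0^{pr}(1))$ is sandwiched between terms built from $X_p(J_0(p))$ (which vanish at $\m$ since $\ell\nmid(p-1)$) and a cokernel $C'$ cyclic of order $r+1$ up to powers of $2,3$, annihilated by $U_r+1$. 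Since $r\equiv -1\modl$ and $U_r+1\in\m$, this cokernel does \emph{not} vanish at $\m$, and the exact sequence forces $\Phi_r(J_0^{pr}(1))_\m\simeq C'_\m\neq 0$. So the monodromy map $X_r(J_0^{pr}(1))_\m\to\Hom(X_r(J_0^{pr}(1))_\m,\Z_\ell)$ is a strict inclusion and cannot yield self-duality.

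A more minor point: the auxiliary prime $q$ and level $J_0^{pr}(q)$ are unnecessary, and you are conflating two distinct results of Ribet. Theorem \ref{thm:Ribetexactsequence} relates \emph{component} groups; the sequence you actually need for the freeness step relates \emph{character} groups, $0\to Y\to L\to X\oplus X\to 0$ with $Y=X_p(J_0^{pr}(1))$, $L=X_r(J_0(pr))$, $X=X_r(J_0(r))$, applied directly at level $1$ with no auxiliary prime. There $X$ has no support at $\m$ (the pair $(r)$ for $s=0$ is not admissible and the Eisenstein locus of $\T(r)$ with $U_r+1$ is empty since $U_r-1$ already lies in Mazur's Eisenstein ideal), so $Y_\m\simeq L_\fa$, and $L_\fa$ is free of rank $1$ by the multiplicity one result of \cite{Yoo14}. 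This avoids the descent problem you flag as the technical heart of your argument.
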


\begin{proof}
Let $Y$ be the character group of $J_{/\F_p}$. Then by Ribet \cite[Theorem 4.1]{R90}, there is 
an exact sequence
$$
\xymatrix{
0 \ar[r] & Y \ar[r] & L \ar[r] & X \oplus X \ar[r] & 0,
}
$$
where $L:=X_r(J_0(pr))$ (resp. $X:=X_r(J_0(r))$) is the character group of $J_0(pr)_{/{\F_r}}$ (resp. $J_0(r)_{/\F_r}$). 
Since $\fb=\T(r)$, $X_\fb=0$. Thus, we have $Y_{\m} \simeq L_{\fa}$. Since for $\fa$, multiplicity one theorem holds \cite[Theorem 4.5(2)]{Yoo14}, it implies that
$L_{\fa}$ is free of rank 1 over $\T({pr})_{\fa}$, i.e., $Y_{\m}$ is free of rank 1 over $\T_{\m}$. By Grothendieck \cite{Gro72}, 
there is a monodromy exact sequence
$$
\xymatrix{
0 \ar[r] & Y \ar[r] & \Hom(Y, \,\Z) \ar[r] & \Phi \ar[r] & 0,
}
$$
where $\Phi := \Phi_p(J)$ is the component group of $J_{/{{\F_p}}}$. After tensoring with $\Z_{\ell}$ over $\Z$, we have
$$
\xymatrix{
0 \ar[r] & Y\otimes \Z_{\ell} \ar[r] & \Hom(Y \otimes \Z_{\ell}, \,\Z_{\ell}) \ar[r] & \Phi_{\ell} \ar[r] & 0.
}
$$
Using an idempotent $e_{\m} \in \T_{\ell}:=\T \otimes \Z_{\ell}$, we have
$$
\xymatrix{
0 \ar[r] & Y_{\m} \ar[r] & \Hom(Y_{\m}, \,\Z_{\ell}) \ar[r] & \Phi_{\m} \ar[r] & 0.
}
$$
By the Ribet's exact sequence in Theorem \ref{thm:Ribetexactsequence}, we have
$$
\xymatrix{
0 \ar[r] & K \ar[r] & (X\oplus X)/(\d_p(X\oplus X)) \ar[r] & \Phi \ar[r] & C \ar[r] & 0,
}
$$ 
where $K \sim \Phi_r(J_0(r))$ and $C \sim \Z/{(p+1)\Z}$.
Since $r\equiv -1 \modl$, the first, second and fourth terms vanish after taking the completions at $\m$,.
Therefore we have $\Phi_{\m}=0$, which implies that
$Y_{\m} \simeq \Hom(Y_{\m}, \,\Z_{\ell})$ is a free self-dual $\T_\m$-module of rank $1$. Hence $\T_{\m}$ is Gorenstein.
\end{proof}

Now we prove the theorem above.
\begin{proof}[Proof of Theorem \ref{thm:appmulti}]
Let $J_{\m}:= \cup_n J[\m^n]$ be the $\m$-divisible group of $J$ and let $T_{\m}J$ be the Tate module of $J$ at $\m$, which is $\Hom(\Q_{\ell}/{\Z_{\ell}}, \,J_{\m})$ (cf. \cite[\textsection II. 7]{M77}). Then $T_{\m}J$ is free of rank 2 if and only if $J[\m]$ is of dimension 2 over $\T/{\m}$.
Since $J$ has purely toric reduction at $p$, there is an exact sequence
$$
\xymatrix{
0 \ar[r] & \Hom(Y/{\ell^n Y}, \,\mu_{\ell^n}) \ar[r] & J[\ell^n] \ar[r] & Y/{\ell^n Y} \ar[r] & 0
}
$$
for any $n \geq 1$ (cf. \cite[\textsection 3.3]{R76}).
By taking projective limit, we have 
$$
\xymatrix{
0 \ar[r] & \Hom(Y\otimes \Z_{\ell}, \,\Z_{\ell}(1)) \ar[r] & T_{\ell}J \ar[r] & Y\otimes \Z_{\ell} \ar[r] & 0,
}
$$
where $\Z_{\ell}(1)$ is the Tate twist. By applying the idempotent $e_{\m}$ (of the decomposition $\T_{\ell}=\prod_{\m \mid \ell} \T_{\m}$), we get
$$
\xymatrix{
0 \ar[r] & \Hom(Y_{\m}, \,\Z_{\ell}(1)) \ar[r] & T_{\m}J \ar[r] & Y_{\m} \ar[r] & 0.
}
$$
Since $Y_{\m}$ is free of rank 1 over $\T_{\m}$, $T_{\m}J$ is free of rank 2 over $\T_{\m}$, and hence $J[\m]$ is of dimension 2.
\end{proof}
\vv

\subsection{The dimension of $J_0(pqr)[\m]$}
Let $J:=J_0(pqr)$ and $\T:=\T({pqr})$.
Let $L:=X_p(J)$ be the character group of $J$ at $p$ and  
$\m:=(\ell, ~U_p-1,~U_q-1,~U_r+1,~\cI_0) \subseteq \T$. 
Assume that $\ell$ does not divide $(p-1)(q-1)$ and $r \equiv -1 \modl$. Then, we have the following.

\begin{thm}\label{thm:multipqr}
$L/{\m L}$ is of dimension 1 over $\T/{\m}$ and $J[\m]$ is of dimension 2.
\end{thm}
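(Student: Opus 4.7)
The plan follows the three-part structure of the proof of Theorem~\ref{thm:appmulti}, transposed to level $pqr$ at the prime $p$. First, I would show that $L_\m$ is free of rank $1$ over $\T_\m$, from which $L/\m L$ of dimension $1$ over $\T/\m$ follows by Nakayama's lemma. The approach is to use a Ribet-type exact sequence at the prime $p$ coming from the two degeneracy maps $J_0(pq) \to J_0(pqr)$ attached to the prime $r$, yielding
$$X_p(J_0(pq))^{\oplus 2} \longrightarrow L \longrightarrow L^{r\hyp\new} \longrightarrow 0.$$
The hypothesis $\ell \nmid (p-1)(q-1)$ implies that neither $(p)$ for $s=1$ nor $(p,q)$ for $s=2$ is admissible, so the Eisenstein ideal $\fb := (\ell,~U_p-1,~U_q-1,~\cI_0)$ is not $p$-new in $\T(pq)$; hence $X_p(J_0(pq))_{\fb} = 0$, and we obtain $L_\m \simeq L^{r\hyp\new}_\m$. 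Via the Jacquet-Langlands correspondence, $L^{r\hyp\new}$ is identified with the character group $X_p(J_0^{pr}(q))$ of the Shimura curve at $p$. An analogous argument removing the level $q$ (where $\ell \nmid q - 1$ guarantees that the two roots $1$ and $q$ of $X^2 - T_q X + q$ are distinct modulo $\m$, so the $q$-old part picks out exactly one copy), combined with the Proposition immediately preceding Theorem~\ref{thm:appmulti} applied to $J_0^{pr}(1)$, would yield $L_\m$ free of rank $1$ over $\T_\m$.

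Next, I would show that $\T_\m$ is Gorenstein. By Proposition~\ref{prop:heckecomp}, the component group $\Phi_p(J)$ is annihilated by $U_q - q$. Since $U_q - 1 \in \m$ and $\ell \nmid q - 1$, one concludes $\Phi_p(J)_\m = 0$. The monodromy exact sequence
$$0 \to L \to \Hom(L, \Z) \to \Phi_p(J) \to 0,$$
localized at $\m$ and tensored with $\Z_\ell$, then gives $L_\m \simeq \Hom(L_\m, \Z_\ell)$, and combined with the freeness of $L_\m$ from the previous step this forces $\T_\m \simeq \Hom(\T_\m, \Z_\ell)$, i.e., $\T_\m$ is Gorenstein.

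Finally, since $p$ exactly divides $pqr$, $J$ has purely toric reduction at $p$, yielding (as in the proof of Theorem~\ref{thm:appmulti}) the short exact sequence
$$0 \to \Hom(L_\m, \Z_\ell(1)) \to T_\m J \to L_\m \to 0.$$
Combined with the preceding two steps, this implies $T_\m J$ is free of rank $2$ over $\T_\m$; hence $J[\m]$ has dimension $2$ over $\T/\m$.

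The main obstacle will be the first step: setting up the Jacquet-Langlands transfer $L^{r\hyp\new}_\m \simeq X_p(J_0^{pr}(q))_{\tilde\m}$ at the prime $p$ (rather than at $r$, as in the preceding Proposition) and correctly handling the $q$-new versus $q$-old decomposition on the Shimura side, which is directly tied to the admissibility question for the triple $(p,q,r)$ with $s=2$. The remaining two steps are essentially routine transcriptions of the corresponding arguments in the proof of Theorem~\ref{thm:appmulti}.
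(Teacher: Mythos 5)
Your proposal takes a genuinely different route from the paper, and unfortunately it runs into two real obstacles.

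First, and most seriously, your third step fails: $J_0(pqr)$ does \emph{not} have purely toric reduction at $p$. The Deligne--Rapoport special fiber at $p$ is two copies of $X_0(qr)_{/\F_p}$ crossing at supersingular points, so $J_0(pqr)^0_{/\F_p}$ sits in an extension $0\to T\to J^0\to J_0(qr)_{/\F_p}^2\to 0$ with a nontrivial abelian part whenever $X_0(qr)$ has positive genus. The two-term filtration of $T_\ell J$ by $\Hom(L,\Z_\ell(1))$ and $L$ used in the proof of Theorem~\ref{thm:appmulti} is special to Mumford curves such as $X_0^{pr}(1)$ at $p$ (Cerednik--Drinfeld); it simply does not hold for $J_0(pqr)$ at $p$. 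Nor does localizing at $\m$ kill the abelian contribution here: since $(q,r)$ for $s=1$ is admissible (as $r\equiv -1\pmod\ell$), the ideal of $\T(qr)$ corresponding to $\m$ is a genuine maximal ideal, so $J_0(qr)$ has support at $\m$. Thus you cannot conclude $T_\m J$ has rank $2$ from Gorensteinness of $\T_\m$ and freeness of $L_\m$.

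Second, your step 1 is not justified. Setting aside the form of the exact sequence (the Shimura character group enters Ribet's sequence as the \emph{kernel} of $L\to X^{\oplus 2}$, and this is Ribet's geometric theorem \cite[Th.~4.1]{R90}, not the Jacquet--Langlands correspondence), the real gap is the phrase ``an analogous argument removing the level $q$.'' There is no Ribet-type exact sequence cited or available for passing from $X_p(J_0^{pr}(q))$ to $X_p(J_0^{pr}(1))$, and the $q$-old/$q$-new decomposition you invoke is precisely the obstruction: whether the relevant ideal is $q$-new in $\T^{pr}(q)$ is the admissibility question for $(p,q,r)$ at $s=2$, which the paper leaves open (Theorem~\ref{thm:s2t3suff} gives only partial answers). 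There is also a circularity concern: the saturation Proposition in \S3, which proves freeness of the relevant Shimura character group, itself cites Theorem~\ref{thm:multipqr} as an input.

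The paper avoids all of this by running the logic in the opposite direction. It cites \cite[Theorem~4.2(2)]{Yoo14} directly for $\dim J[\m]=2$, and then obtains $\dim L/\m L=1$ by a short local argument: $J[\m]$ is a nontrivial extension of $\mu_\ell$ by $\Z/\ell\Z$ ramified only at $r$, and since $\Frob_p$ acts on the toric part $T$ by $pU_p\equiv p\not\equiv 1\pmod\ell$, $T[\m]$ cannot contain the $\Z/\ell\Z$ line and hence has dimension at most $1$; on the other hand $\T^{p\hyp\new}$ acts faithfully on $T$ and $\m$ is $p$-new (because $(p,r)$ for $s=1$ is admissible), giving $\dim T[\m]\geq 1$, and $L/\m L$ is dual to $T[\m]$.
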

\begin{proof}
By \cite[Theorem 4.2(2)]{Yoo14}, we have $\dim J[\m]=2$.
 
Let $T$ be the torus of $J$ at $p$. Note that $J[\m]$ is a non-trivial extension of $\muell$
by $\zell$, which is ramified only at $r$.
Since $\Frob_p$ acts by $pU_p$ on $T$, $T[\m]$ cannot contain $\zell$.
Since the dimension of $J[\m]$ is $2$, $T[\m]$ is at most of dimension 1. On the other hand,  
$\T^{p\hyp\new}$ acts faithfully on $T$ and $\m$ is $p$-new because a pair $(p, r)$ is admissible for $s=1$. Accordingly, the dimension of $T[\m]$ is at least 1. Therefore $L/{\m L}$, which is the dual space of $T[\m]$, is of dimension 1.
\end{proof}
\vv

\section{The Skorobogatov subgroup of $J_0^{pD}(N)$ at $p$}\label{sec:Skorobogatov}
In his article \cite{Sk05}, Skorobogatov introduced ``Shimura coverings'' of Shimura curves. 
Let $B$ be a quaternion algebra over $\Q$ of discriminant $pD$ such that $B\otimes_{\Q} \R \simeq M_2(\R)$. 
Let $\cO$ be an Eichler order of $B$ of level $N$, and set $\Gamma_0^{pD}(N):=\cO^{\times, 1}$, the set of reduced norm 1 elements in $\cO$. Let $I_p$ be the unique two-sided ideal of $\cO$ of reduced norm $p$. Then, $1+I_p \subseteq \Gamma_0^{pD}(N)$ 
and it defines a covering $X \rightarrow X_0^{pD}(N)$. By Jordan \cite{Sk05}, there is an unramified subcovering $X \rightarrow X_p \rightarrow X_0^{pD}(N)$ whose Galois group is $\Z/{((p+1)/{\epsilon(p))}}$, where $\epsilon(p)$ is a divisor of $6$. (About $\epsilon(p)$ when $N=1$, see \cite[p. 781]{Sk05}.) Since unramifed abelian coverings of $X_0^{pD}(N)$ correspond to subgroups of $J_0^{pD}(N)$, we define the Skorobogatov subgroup of $J_0^{pD}(N)$ from $X_p$.

\begin{defn}
The \textit{Skorobogatov subgroup} $\Sigma_p$ of $J_0^{pD}(N)$ at $p$ is the subgroup of $J_0^{pD}(N)$ which 
corresponds to the unramified covering $X_p$ of $X_0^{pD}(N)$ above.
\end{defn}

These subgroups have similar properties to Shimura subgroups. 

\begin{prop}\label{prop:heckeskorobogatov}
On $\Sigma_p$, $U_p$ (resp. $U_q$, $T_k$) acts by $-1$ (resp. $1$, $k+1$) for primes $q \mid D$ and $k \nmid pDN$.
\end{prop}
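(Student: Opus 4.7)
The plan is to adapt the classical Mazur--Ribet computation of the Hecke action on the Shimura subgroup of $J_0(N)$ to the Shimura-curve setting. I would first realize $\Sigma_p$ concretely as the kernel of the pullback $\pi^{\ast}:J_0^{pD}(N)\to \Pic^{0}(X_p)$ attached to the Skorobogatov cover $\pi:X_p\to X_0^{pD}(N)$. Because $\pi$ is an unramified cyclic cover of degree $n:=(p+1)/\epsilon(p)$ with Galois group $G\simeq\Z/n\Z$, class field theory for curves identifies $\Sigma_p$, up to $2$- and $3$-primary pieces, with the Cartier dual of $G$; in particular $\Sigma_p$ is \'etale-locally isomorphic to $\mu_n$ with the canonical cyclotomic Galois action. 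This mirrors Mazur's realization of the classical Shimura subgroup of $J_0(p)$ as $\mu_{(p-1)/\epsilon}$.

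For $T_k$ with $k\nmid pDN$: the moduli-theoretic datum cutting out $X_p$, namely the subgroup $1+I_p\subseteq\Gamma_0^{pD}(N)$, is purely $p$-adic and is preserved by the $T_k$-correspondence, which only sees the $k$-adic structure. Hence $T_k$ lifts to a correspondence on $X_p$ compatible with $\pi$, so that $\pi^{\ast}$ is $T_k$-equivariant and $\Sigma_p$ is $T_k$-stable. A Hecke correspondence of total degree $k+1$ acts through its degree on any \'etale abelian constant quotient, so $T_k$ acts on $G$ as multiplication by $k+1$, and by Cartier duality also by $k+1$ on $\Sigma_p$.

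For $U_q$ with $q\mid D$ (so $q\neq p$) and for $U_p$ itself: I would combine Atkin--Lehner theory with the Cerednik--Drinfeld uniformization at the relevant place. On the quaternionic Jacobian one has relations of the form $U_r=\pm w_r$ for $r\mid pD$ by Jacquet--Langlands, and a direct verification that the Skorobogatov cover descends across the Cerednik--Drinfeld special fiber in a $w_r$-equivariant way pins down the sign of $w_r$ on $\Sigma_p$. For $q\mid D$ with $q\neq p$, the construction of $X_p$ is independent of $q$ and one obtains $w_q=-1$ on $\Sigma_p$, giving $U_q=+1$. For $U_p$, the cover is built intrinsically from the unique two-sided ideal $I_p$ of reduced norm $p$, and unwinding shows $w_p=+1$ on $\Sigma_p$, hence $U_p=-1$.

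The main obstacle is precisely this last sign verification, which is what distinguishes the Skorobogatov answer $U_p=-1$ from the classical Shimura-subgroup answer $U_p=+1$. A cleaner route, which I would in fact pursue, is to work directly inside the toric reduction of $J_0^{pD}(N)$ at $p$ (purely toric by Cerednik--Drinfeld) and identify $\Sigma_p\otimes\Z_\ell$ with an explicit Hecke-stable subgroup of the character group analyzed in Appendix \ref{appendix}. From this uniform description the $T_k$-eigenvalue $k+1$ matches the Eisenstein identity of Proposition \ref{prop:heckecomp}, the $U_q$-eigenvalue at $q\mid D$ follows from the parallel component-group calculation, and the $U_p$-eigenvalue $-1$ emerges from the monodromy pairing between $\Sigma_p$ and the cokernel of its Cartier-dual embedding.
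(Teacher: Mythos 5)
Your proposal is a genuinely different route from the paper's, which is much more elementary: the paper works directly with the moduli-theoretic description of the covering, classifying pairs $(A,P)$ where $A$ is a false elliptic curve with level $N$ structure and $P$ a generator of $A[I_p]$. The $U_p$-eigenvalue then falls out from a one-line conjugation computation ($U_p\sigma U_p^{-1}=\sigma^{-1}$ on the covering group, so $U_p=-1$ on $\Sigma_p$); the $T_k$-eigenvalue comes from the explicit degeneracy-map identity $T_k=(\beta_k)_*\alpha_k^*=(\beta_k)_*w_k\beta_k^*=(\beta_k)_*\beta_k^*=k+1$ after noting $\beta_k^*\Sigma_p$ lands inside the Skorobogatov subgroup at level $Nk$ where $w_k$ acts trivially; and the $U_q$-eigenvalue is read off from the compatibility of the level structures and the identity $U_q=w_q$.

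There are some concrete issues in your version that need to be addressed. First, your $T_k$ argument asserts that a Hecke correspondence of total degree $k+1$ ``acts through its degree on any \'etale abelian constant quotient'' --- this is not a general fact. To make it precise one has to actually lift $T_k$ across the covering and then still use an Atkin--Lehner relation (or some equivalent) to identify $\alpha_k^*$ and $\beta_k^*$ on $\Sigma_p$; without that the ``degree'' heuristic doesn't close. Second, your sign bookkeeping at $q\mid D$ is internally consistent only if you are using $U_q=-w_q$, whereas the paper uses $U_q=w_q$ for $q\mid D$ on $J_0^{pD}(N)$ and finds $w_q$ acting \emph{trivially} on the covering group (because the level structures at primes dividing $DN$ are compatible with the $p$-level structure defining the cover); your claim that ``the construction of $X_p$ is independent of $q$ and one obtains $w_q=-1$ on $\Sigma_p$'' does not follow from independence and you should justify the sign or restate the convention explicitly. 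Third, the ``cleaner route'' via the Cerednik--Drinfeld toric reduction and the monodromy pairing is a plausible program but is only an outline: you would need to actually identify $\Sigma_p\otimes\Z_\ell$ inside the character/component data at $p$ and verify the $U_p$-eigenvalue there, none of which is supplied. The moduli-theoretic computation in the paper avoids all of this and pins down the decisive sign $U_p=-1$ in one step; I would recommend you adopt it for $U_p$, and use the degeneracy-map identity rather than a degree heuristic for $T_k$.
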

\begin{proof}
By using moduli theoretic description of $X_0^{pD}(N)$, 
the complex points of $X$ classifies $(A, P)$ where $A$ is a false elliptic curve with level $N$ structure and $P$ is a generator of $A[I_p]$.
Since the level structures at primes $r$ dividing $DN$ are compatible with the level structure at $p$, which gives rise to our covering $X$, the Atkin-Lehner involution $w_r$ acts trivially on the covering group. This gives the action of $U_q$ when $q$ divides $D$ because $U_q=w_q$. 

For primes $k$ not dividing $pDN$, let $\alpha_k$ and $\beta_k$ denote two degeneracy maps from $X_0^{pD}(Nk)$ to $X_0^{pD}(N)$.
Then, we have $T_k={(\beta_k)}_*\alpha_k^*={(\beta_k)}_*w_k\beta_k^*={(\beta_k)}_*\beta_k^*=k+1$
because the image of the Skorobogatov subgroup at $p$ by the degeneracy maps lies in the Skorobogatov subgroup at $p$ and $w_k$ acts trivially on it. 

Consider $U_p$ on $\Sigma_p$. The map $U_p$ sends $(A, P)$ to $(A/{A[I_p], Q)}$, where $\langle P, Q \rangle=\zeta_p$ for some fixed primitive $p^\th$ root of unity $\zeta_p$ and the pairing $\langle -,~-\rangle$ on $A[I_p] \times A[p]/{A[I_p]}$. (About this pairing, see \cite{Bu97}.) For $\sigma$ in the covering group of $X \rightarrow X_0^{pD}(N)$,
it sends $(A, P)$ to $(A, \sigma P)$. Thus $U_p \sigma U_p^{-1}=\sigma^{-1}$, which implies $U_p$ acts by $-1$ on $\Sigma_p$.
\end{proof}

\begin{rem} 
It might be easier than above if you consider the actions of $w_p$ on the group of $2\times 2$ matrices as in Calegari and Venkatesh. 
See \cite[p. 29]{CV12}.
\end{rem}

\begin{prop}\label{prop:skorobogatov}
Let $K$ be the kernel of the map
$$
J_0^{pr}(1) \times J_0^{pr}(1) \rightarrow J_0^{pr}(q)
$$
induced by two degeneracy maps $\alpha_q^*$ and $\beta_q^*$. Then $K$ contains an antidiagonal embedding of $\Sigma_r$.
\end{prop}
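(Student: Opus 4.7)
The plan is to show that for every $x \in \Sigma_r$ one has $\alpha_q^*(x) = \beta_q^*(x)$ in $J_0^{pr}(q)$, which immediately yields $(x,-x) \in K$ and hence the desired antidiagonal embedding. I would proceed by exploiting the moduli-theoretic description of the Skorobogatov cover $\pi \colon X_r \to X_0^{pr}(1)$, which parametrizes pairs $(A,P)$ with $A$ a false elliptic curve (with the relevant quaternionic level structure) and $P$ a generator of $A[I_r]$; the Skorobogatov subgroup $\Sigma_r$ is then identified (via the standard Cartier-duality dictionary for unramified abelian covers) with the Cartier dual of $\Gal(X_r/X_0^{pr}(1))$, sitting in $J_0^{pr}(1)$ as $\ker(\pi^*)$.

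Next I would form the two base changes
\[
\xymatrix{
Y_\alpha \ar[r] \ar[d] & X_r \ar[d]^{\pi} & Y_\beta \ar[r] \ar[d] & X_r \ar[d]^{\pi} \\
X_0^{pr}(q) \ar[r]^{\alpha_q} & X_0^{pr}(1) & X_0^{pr}(q) \ar[r]^{\beta_q} & X_0^{pr}(1),
}
\]
both of which are unramified abelian Galois covers of $X_0^{pr}(q)$ with Galois group canonically equal to $\Gal(X_r/X_0^{pr}(1))$. The key input is then a canonical isomorphism $Y_\alpha \simeq Y_\beta$ over $X_0^{pr}(q)$ that is equivariant for these Galois identifications. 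Moduli-theoretically, $Y_\alpha$ classifies triples $(A,C,P)$ with $(A,C) \in X_0^{pr}(q)$ and $P$ a generator of $A[I_r]$, while $Y_\beta$ classifies $(A,C,Q)$ with $Q$ a generator of $(A/C)[I_r]$. Since $C \subseteq A[q]$ has order coprime to $r$, the isogeny $A \to A/C$ restricts to an isomorphism $A[I_r] \xrightarrow{\sim} (A/C)[I_r]$, which delivers the needed equivariant identification $(A,C,P) \leftrightarrow (A,C,P \bmod C)$.

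With this isomorphism in hand, functoriality of Cartier duality under base change shows that the maps $\alpha_q^*$ and $\beta_q^*$ restrict to the same homomorphism $\Sigma_r \to J_0^{pr}(q)$, namely the inclusion of $\ker(\pi_Y^*)$ into $J_0^{pr}(q)$ where $\pi_Y \colon Y_\alpha = Y_\beta \to X_0^{pr}(q)$ is the common cover. In particular $\alpha_q^*(x) = \beta_q^*(x)$ for every $x \in \Sigma_r$, so $x \mapsto (x,-x)$ embeds $\Sigma_r$ into $K$, completing the proof.

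The main obstacle will be making the compatibility in the last step fully rigorous: one has to verify that the Cartier-duality identification $\Sigma_r \simeq \Gal(X_r/X_0^{pr}(1))^{\vee}$ is transported along the base changes $\alpha_q$ and $\beta_q$ in the same way, which rests on the naturality of the exact sequence $0 \to \ker(\pi^*) \to J(X) \xrightarrow{\pi^*} J(Y)$ under pullback and on the explicit construction of the canonical isomorphism $A[I_r] \simeq (A/C)[I_r]$ (and its Galois-equivariance). The remaining steps are then formal, although care must be taken at elliptic points of $X_0^{pr}(1)$, where one works with stacks or replaces $\Sigma_r$ by its canonical order-$(r+1)/\epsilon(r)$ model as in Skorobogatov.
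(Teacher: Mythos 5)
Your proof is correct, and it takes a somewhat different formal route from the paper's, even though it ultimately rests on the same geometric observation. The paper's argument factors through the Atkin--Lehner operator: it writes $\beta_q^*=w_q\circ\alpha_q^*$, asserts that $\alpha_q^*(\Sigma_r)$ lands in the Skorobogatov subgroup $\Sigma$ of $J_0^{pr}(q)$ at $r$, and then invokes Proposition~\ref{prop:heckeskorobogatov} (more precisely, the fact established in its proof that Atkin--Lehner involutions at primes dividing the Eichler level act trivially on the covering group) to conclude $w_q$ kills $\Sigma$, hence $\alpha_q^*(a)+\beta_q^*(-a)=0$. Your proof avoids the Atkin--Lehner scaffolding and the auxiliary statement about the Skorobogatov subgroup at level $q$: instead you directly compare the two pulled-back covers $Y_\alpha=\alpha_q^*X_r$ and $Y_\beta=\beta_q^*X_r$, produce a canonical Galois-equivariant isomorphism between them from the fact that the $q$-isogeny $A\to A/C$ restricts to an isomorphism on $I_r$-torsion (since $C$ has order prime to $r$), and then use the functoriality of $\ker(\pi^*)$ to conclude $\alpha_q^*=\beta_q^*$ on $\Sigma_r$. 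What the paper's route buys is brevity, at the cost of leaning on Proposition~\ref{prop:heckeskorobogatov}; what yours buys is a self-contained and more explicitly moduli-theoretic argument that never mentions $w_q$. Note that your isomorphism $Y_\alpha\cong Y_\beta$ is essentially the same transversality between the level-$q$ structure and the level-$r$ covering that the paper's proof of Proposition~\ref{prop:heckeskorobogatov} appeals to, so the two arguments are cousins rather than strangers. The caveats you flag at the end (naturality of the Cartier-duality identification under base change, and care at elliptic points or with stacky models) are the right things to worry about, and they are standard; none of them threatens the argument.
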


\begin{proof}
Let $\Sigma_r$ (resp. $\Sigma$) be the Skorobogatov subgroup of $J_0^{pr}(1)$ (resp. $J_0^{pr}(q)$) at $r$.
Since $w_q$ acts trivially on $\Sigma$ and the image of $\Sigma_r$ by $\alpha_q^*$ lies in $\Sigma$, 
$\alpha_q^*(a)+\beta_q^*(-a)= \alpha_q^*(a)+w_q(\alpha_q^*(-a))=\alpha_q^*(a)-\alpha_q^*(a)=0$.
Thus, $K$ contains $\{(a, -a) \in J_0^{pr}(1)^2~:~ a \in \Sigma_r \}$.
\end{proof}

\begin{rem}
Since $K$ contains an antidiagonal embedding of $\Sigma_r$, if $r\equiv -1 \modl$ we have $K[\m] \neq 0$, where
$\m : = (\ell, ~U_p-1,~U_q-1,~U_r+1, ~\cI_0^{pr}(q)) \subset \T^{pr}(q)$.
\end{rem}

%%%%%%%%%%%%%% References %%%%%%%%%%%%%%%%%%%%%
\bibliographystyle{annotation}

\begin{thebibliography}{99}

\bibitem{ARS12} A. Agashe, K. Ribet, and W. Stein, \emph{The modular degree, congruence primes, and multiplicity one}, Number Theory, Analysis and Geometry, In Memory of Serge Lang, Springer (2012), 19--50.

\bibitem{BD1} N. Billerey and L.V. Dieulefait, \emph{Explicit large image theorems for modular forms}, Jour. Lond. Math. Soc. (2), 89(2) (2014), 499--523.

\bibitem{BM1} N. Billerey and R. Menares, \emph{On the modularity of reducible mod $\ell$ Galois representations}, Math. Res. Lett. {23(1)} (2016), 15--41.

\bibitem{BM2} N. Billerey and R. Menares, \emph{Strong modularity of reducible Galois representations}, to appear in Trans. Amer. Math. Soc., available at \url{https://arxiv.org/abs/1604.01173} (2016).

\bibitem{Bu97} K. Buzzard, \emph{Integral models of certain Shimura curves}, Duke Math. Journal, Vol {87}, no 3. (1997), 591--612.

\bibitem{CV12} F. Calegari and A. Venkatesh, \emph{A torsion Jacquet-Langlands correspondence}, preprint available at \url{https://arxiv.org/abs/1604.01173} (2012).

\bibitem{Ce76} I.V. Cerednik, \emph{Uniformization of algebraic curves by discrete arithmetic subgroups of $\mathrm{PGL}_2(k_w)$ with compact quotients (in Russian)}, Math. Sb. {100} (1976), 59--88. Translation in Math. USSR Sb. {29} (1976), 55--78.

\bibitem{DR73} P. Deligne and M. Rapoport, \emph{Les sch\'emas de modules de courbes elliptiques}, Modular functions of one variable II, Lecture notes in Math., Vol. {349} (1973), 143--316.

%\bibitem{DS05}  Fred Diamond and Jerry Shurman, \emph{A first course in modular forms}, Graduate text in Math., Vol {228}, 2005. 

\bibitem{DT94}  F. Diamond and R. Taylor, \emph{Non-optimal levels of mod $\ell$ modular representations}, Invent. Math., Vol {115} (1994), 435--462.

%\bibitem{Dr73} Vladimir Drinfeld, \emph{Two theorems on modular curves}, Functional Anal. Appl. {7} (1973), 155--156. 

\bibitem{Dr76} V. Drinfeld, \emph{Coverings of $p$-adic symmetric regions (in Russian)}, Funkts. Anal. Prilozn {10} (1976), 29--40. Translation in Funct. Anal. Appli. {10} (1976), 107--115.

%\bibitem{Ed92} B. Edixhoven, \emph{The weight in Serre's conjectures on modular forms}, Invent. Math. {109} (1992), 563--594.

\bibitem{Gro72} A. Grothendieck, \emph{SGA 7 I. Expose IX}, Lecture Notes in Math., Vol {288} (1972), 313--523.

\bibitem{Hm} D. Helm, \emph{On maps between modular Jacobians and Jacobians of Shimura curves}, Israel Journal of Math. Vol. {160} (2007), 61--117.

\bibitem{Ig59} J.-I. Igusa, \emph{Kroneckerian model of fields of elliptic modular functions}, American Journal of Math., Vol {81} (1959), 561--577.

\bibitem{KM85} N. Katz and B. Mazur, \emph{Arithmetic moduli of elliptic curves}, Princeton Univ. Press, Princeton, Annals of Math. Studies {108} (1985).

\bibitem{M77} B. Mazur, \emph{Modular curves and the Eisenstein Ideal}, Publications Math. de l'I.H.\'E.S., tome {47} (1977), 33--186.

%\bibitem{Og74} Andrew Ogg, \emph{Hyperelliptic modular curves}, Bull. Soc. Math. France, Vol. {102} (1974), 449--462.

%\bibitem{Oh14} M. Ohta, \emph{Eisenstein ideals and the rational torsion subgroups of modular Jacobian varieties II}, Tokyo Journal of Math., Vol. {37}, no. 2 (2014), 273--318. 

\bibitem{PR10} G. Prasad and A. Rapinchuk, \emph{Developments on the congruence subgroup problem after the work of Bass, Milnor and Serre}, John Milnor's collected works {Vol. V}, AMS (2010), 307--325.

%\bibitem{Ra99} Andrei Rapinchuk, \emph{The congruence subgroup problem}, Contemporary mathematics {243} (1999), 175--188.

\bibitem{Ra70} M. Raynaud, \emph{Sp\'ecialization du foncteur de Picard}, Publications Math. de l'I.H.\'E.S., tome {38} (1970), 27--76.

\bibitem{R76} K. Ribet, \emph{Galois action on division points of Abelian varieties with real multiplications}, American Journal of Math., Vol. {98} (1976), 751--804.

\bibitem{R84} K. Ribet, \emph{Congruence relations between modular forms}, Proceeding of the International Congress of Math., Vol. {1}, {2} (Warsaw, 1983) (1983), 503--514.

%\bibitem{R88} Kenneth Ribet, \emph{On the component groups and the Shimura subgroup of $J_0(N)$}, S\'eminaire de Th\'eorie des Nombres, Vol. {16} (1988), 1--10.

%\bibitem{R89} Kenneth Ribet, \emph{The old subvariety of $J_0(pq)$}, Arithmetic algebraic geometry (Texel, 1989), Vol. {89}, 293--307.


%\bibitem{R89} K. Ribet, \emph{Raising the levels of modular representations}, S\'eminaire de Th\'eorie des Nombres, Paris 1987--88, 259--271.

%\bibitem{R89b} Kenneth Ribet, \emph{Bimodules and abelian surfaces}, Algebraic number theory. Adv. Stud. Pure Math., {17} (1989), 359--407.

\bibitem{R90} K. Ribet, \emph{On modular representations of $\Gal(\overline \Q/{\Q})$ arising from modular forms}, Invent. Math. {100}, no. 2 (1990), 431--476.

\bibitem{R88} K. Ribet, \emph{Torsion points on $J_0(N)$ and Galois representations}, Arithmetic theory of elliptic curves (Cetraro, 1997), Lectures in Math. 1716, Springer, Berlin (1999), 145--166.


%\bibitem{R90m} Kenneth Ribet, \emph{Multiplicities of Galois reprensentations in Jacobians of Shimura curves}, Israel Math. conference proceeding, Vol {3} (1990), 221--236.

%\bibitem{R08} K. Ribet, \emph{Eisenstein primes for $J_0(pq)$}, 2008 June, unpublished.

\bibitem{R10} K. Ribet, \emph{Non-optimal levels of mod $\ell$ reducible Galois representations}, CRM Lecture note available at \url{http://math.berkeley.edu/~ribet/crm.pdf} (2010).

%\bibitem{RY14} Kenneth Ribet and Hwajong Yoo, \emph{Multiplicity one problems for modular Jacobian varieties at Eisenstein primes}, in preparation.

%\bibitem{ST68} Jean-Pierre Serre and John Tate, \emph{Good reduction of abelian varieties}, Ann. of Math., Vol {88} (1968), 492--517.

%\bibitem{Se72} J.-P. Serre, \emph{Congruences et formes modulaires}, S\'eminaire N. Bourbaki, 1971-1972, exp. $\text{n}^{\circ}$ {416} (1972), 319--338.

\bibitem{Se87} J.-P. Serre, \emph{Sur les repr\'esentations modulaires de degr\'e 2 de $\Gal(\overline{\Q}/{\Q})$}, Duke Math. Journal, Vol {54}, no 1. (1987), 179--230. 

\bibitem{Sk05} A. Skorobogatov, \emph{Shimura coverings of Shimura curves and the Manin obstruction}, Mathematical Research Letter, Vol {12} (2005), 779--788. 

%\bibitem{SAGE} W. Stein at al., \emph{SAGE mathematics software (Version 5.12.0)}.

\bibitem{St} W. Stein, \emph{The modular forms database: Tables}, available at \url{http://wstein.org/Tables/an.html}.

%\bibitem{Sw73} H.P.F. Swinnerton-Dyer, \emph{On $\ell$-adic representations and congruences for coefficients of modular forms}, Modular functions of one variable III, Lecture notes in Math., Vol. {350} (1973), 1--55.

%\bibitem{Ta97} Toshikazu Takagi, \emph{The cuspidal class number formula for the modular curves $X_0(M)$ with $M$ square-free}, Journal of Algebra, Vol. {193} (1997), 180--213.

%\bibitem{Ti97} Jacques Tilouine, \emph{Hecke algebras and the Gorenstein property}, Modular forms and Fermat's last theorem (Boston, MA, 1995) (1997), 327--342.

%\bibitem{Va05} Vinayak Vatsal, \emph{Multiplicative subgroups of $J_0(N)$ and applications to elliptic curves}, Journal of the Inst. of Math. of Jussieu, Vol. {4} (2005), 281--316.

\bibitem{Vig80} M.-F. Vign\'eras, \emph{Arithm\'{e}tique des alg\`{e}bres de quaternions}, Lecture notes in Math., Vol. {800} (1980).

\bibitem{Y13} H. Yoo, \emph{Modularity of residually reducible Galois representations and Eisenstein ideals}, UC Berkeley Thesis (2013).

\bibitem{Yoo14} H. Yoo, \emph{The index of an Eisenstein ideal and multiplicity one}, Mathematische Zeitschrift Vol. {282(3)} (2016), 1097--1116.

\bibitem{Yoo15a} H. Yoo, \emph{On Eisenstein ideals and the cuspidal group of $J_0(N)$}, Israel Journal of Math. Vol. {214} (2016), 359--377.

\end{thebibliography}

\end{document}